\documentclass[9pt,epsfig]{amsart}
\usepackage{amssymb, adjustbox, enumerate, amsbsy, stmaryrd}
\usepackage[dvipsnames]{xcolor}
\usepackage{amsmath,wasysym}
\usepackage{comment}
\usepackage{caption}
\usepackage[mathscr]{eucal}
\usepackage{amsthm}
\usepackage{graphicx}
\usepackage {amscd}
\usepackage {epic}
\usepackage[all,color]{xy}
\usepackage[alphabetic]{amsrefs}
\usepackage{color}

\usepackage{graphpap, color}

\usepackage[mathscr]{eucal}
\usepackage{mathrsfs}%,mathabx}
\usepackage{graphicx}

\usepackage{tikz}
\usepackage{tikz-3dplot}

\oddsidemargin = 31pt
\topmargin = 20pt
\headheight = 0pt
\headsep = 25pt
\textheight = 610pt
\textwidth = 402pt
\marginparsep = 10pt
\marginparwidth = 23pt
\footskip = 30pt

\newtheorem{prop}{Proposition}[section]

\newtheorem{coro}[prop]{Corollary}
\newtheorem{defi}[prop]{Definition}

\newtheorem{exam}[prop]{Example}

\newtheorem{lemm}[prop]{Lemma}
\newtheorem{pf-thm}[prop]{proof of theorem}
\newtheorem{rema}[prop]{Remark}

\newtheorem{theo}[prop]{Theorem}

\newtheorem*{ack}{Acknowledgments}

%---------mathcal--------------

%---------mathscr---------------

%----------mathbb----------------

\newcommand{\CC}{\mathbb{C}}

\newcommand{\NN}{\mathbb{N}}
\newcommand{\PP}{\mathbb{P}}

\newcommand{\RR}{\mathbb{R}}
\newcommand{\ZZ}{\mathbb{Z}}

%----------mathbf-------------------

%-----------mathfrak---------------

%---------mathrm----------------------

\def\dd{\mathrm{partial}}

%-----operation---------------

\def\dd{\partial}

%--------color---------------------------
\newcommand{\red}{\color{red}}

%-------xiaowei greek---------------

%-------------xiaowei math symbol---------------

\newcommand{\tri}{\triangle}
%-------------xiaowei pounctual---------------

%-----------------

%%%%%%%%%%%%%%%%%

%%%%%%%%%%%%%%%%%%%

%\input{diagrams.sty}
\input{xy}
\xyoption{all}

\begin{document}

\title{Asymptotic Chow stability of symmetric reflexive toric varieties}

\author{King-Leung Lee}
\address{Instituto de Ciencias Matem\'{a}ticas, ICMAT\\
           C. Nicolás Cabrera, 13-15, 28049 Madrid\\ Spain}
\email{king.lee@icmat.es}

\date{\today}
\maketitle
\begin{abstract}
In this note, we study the asymptotic Chow  stability of reflexive toric varieties. We provide examples of symmetric reflexive toric varieties that are not asymptotic Chow semistable. On the other hand, we also show that any weakly symmetric reflexive toric varieties which have regular triangulation (special) are  asymptotic Chow polystable. \\

After that, we provide other criteria that can show a symmetric reflexive toric variety is
asymptotic Chow polystable. In particular, we give two examples that are asymptotic Chow  polystable, but not special. We also provide some examples of special polytopes, mainly in 2 or 3 dimensions, and some in higher dimensions.
\end{abstract}
\section{introduction}
In GIT theory, in the construction the Moduli space, we focus on those varieties that are  asymptotic Chow semistable (\cite{MFK94},\cite{GKZ94}). Also, Chow stability has many relations with other stabilities in K\"{a}hler geometry (See \cite{RT07}, \cite{Yot17} for example), so it is important to study the Chow  stability of singular varieties. However, unlike the smooth case, which is related to the constant scalar curvature manifolds (\cite{Don01}, \cite{Mab04}, \cite{Mab06}, \cite{RT07}, see also the survey paper \cite{PS09} for example), in general, K stablity or existance of cscK cannot imply Chow stability. Moreover, there is only very few examples of Chow polystable of singular varieties. In general, it is very difficult to show that a variety is  asymptotic Chow semistable. However, by the work of Futaki \cite{Fut04} and Ono \cite{Ono13}, we can determine the  asymptotic Chow polystability of toric varieties. We first recall the main theorem we used in \cite{Ono13}   (see also  \cite{LLSW19}).\\

\begin{theo}[\cite{Ono13}]
Let $P$ be a integral convex polytope of a toric variety $X_P$, and let $G<SL(n,\ZZ)$ be the biggest finite  group acting on $P$ by multiplication.  A $n$ dimensional toric variety $X_P$ is asymptotic Chow semistable iff for any $k\in \NN$, and for any convex $G$ invariant function $f$ on $kP$, we have  
\[\dfrac{1}{Vol(kP)}\int_{kP} f dV\leq \dfrac{1}{\chi(kP \cap \ZZ^n)} \sum_{v\in kP \cap \ZZ^n} f(v).\] 
\end{theo}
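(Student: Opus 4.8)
The plan is to run the Hilbert--Mumford numerical criterion for Chow stability and then use the combinatorics of the torus action to convert the abstract weight inequality into the stated comparison between the continuous average and the lattice-point average of $f$. First I would fix $k$ and consider the Kodaira embedding $X_P\hookrightarrow \PP(H^0(X_P,kL)^*)=\PP^{N_k}$, where $L$ is the ample bundle attached to $P$ and a basis of $H^0(X_P,kL)$ is given by the monomials indexed by $kP\cap\ZZ^n$, so that $N_k+1=\#(kP\cap\ZZ^n)=\chi(kP\cap\ZZ^n)$ (the two agree for $k\gg 0$ by Kodaira vanishing). By the Hilbert--Mumford criterion the $k$-th Chow point $\mathrm{Chow}(X_P)$ is Chow semistable iff the Chow weight $w_\lambda(X_P)\ge 0$ for every algebraic one-parameter subgroup $\lambda$ of $\SL(N_k+1,\CC)$. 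Since this point is fixed by the maximal torus $T\subset\SL$ acting diagonally in the monomial basis, the standard GIT reduction (testing a torus-fixed point against diagonal one-parameter subgroups) shows it suffices to consider $\lambda$ recorded by an integer weight vector $(a_v)_{v\in kP\cap\ZZ^n}$ with $\sum_v a_v=0$.

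The heart of the argument is to evaluate $w_\lambda(X_P)$ for such a diagonal $\lambda$. Following Futaki and Ono, I would compute the leading behaviour of the toric Chow form (equivalently, apply $T$-equivariant localization, or an Euler--Maclaurin comparison between the lattice-point sum and the integral over the polytope). Writing $f$ for the function with $f(v)=a_v$, the outcome should be that the Chow weight equals a fixed \emph{positive} multiple of
\[ \mathrm{Vol}(kP)\sum_{v\in kP\cap\ZZ^n} f(v)-\chi(kP\cap\ZZ^n)\int_{kP} f\, dV, \]
so that $w_\lambda\ge 0$ for all $\lambda$ becomes exactly the displayed family of inequalities. This identity is the genuine obstacle: one must match the GIT weight of the Chow form with the explicit combinatorial expression while tracking all normalizations, namely the traceless condition $\sum_v a_v=0$, the degree $\deg X_P=n!\,\mathrm{Vol}(P)$, and the identification $\chi=\#$ above.

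Two formal reductions then finish the proof. First, the flat limit of $X_P$ under a diagonal $\lambda$ is the initial (toric) degeneration determined by the regular subdivision of $kP$ induced by the lower convex hull of the lifted points $(v,a_v)$; hence $w_\lambda$ depends on $(a_v)$ only through this convex hull, and the numerical criterion is equivalent to the displayed inequality for all convex piecewise-linear $f$, and then for all convex $f$ by approximation. This is precisely why the test ranges over convex functions. Second, for the role of $G$: every $g\in G\subset\SL(n,\ZZ)$ preserves both the Lebesgue measure (determinant one) and the lattice $\ZZ^n$, so $\int_{kP} f\,dV$ and $\sum_v f(v)$ are each unchanged under $f\mapsto f\circ g$; averaging a convex $f$ over $G$ yields a convex $G$-invariant function with the same two quantities. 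Therefore the inequality holds for all convex $f$ iff it holds for all $G$-invariant convex $f$, which is the asserted form. I expect the weight computation of the middle step to be the main difficulty; the convexity and $G$-averaging steps are routine once that explicit formula is in hand.
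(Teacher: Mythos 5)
The paper itself offers no proof of this statement: it is imported verbatim from \cite{Ono13} (with \cite{Fut04} behind it), so the only benchmark is Ono's original argument, and your outline does shadow its architecture (Hilbert--Mumford, reduction to torus one-parameter subgroups, a weight identity, then $G$-averaging and convex approximation). However, two of your steps are not actually established, and one of them is justified by a false claim. The Chow point of $X_P\subset\PP^{N_k}$ is \emph{not} fixed by the maximal torus of $\SL(N_k+1,\CC)$. Already for the conic $[s:t]\mapsto[s^2:st:t^2]$ the Chow form is $X_0X_2-X_1^2$, and $\diag(t_0,t_1,t_2)$ rescales it to a multiple of itself only when $t_0t_2=t_1^2$; in general the Chow point is fixed only by the image of $(\CC^*)^n$ together with scalars. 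The correct reduction is a Kempf-type theorem: a point whose stabilizer contains a reductive subgroup $S$ is semistable if and only if it is semistable for one-parameter subgroups of the centralizer $Z_{\SL}(S)$; since distinct lattice points of $kP$ give pairwise distinct characters of $(\CC^*)^n$, that centralizer is exactly the diagonal maximal torus. This is a theorem you must cite or prove, not an observation about fixed points.

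The more serious gap is that the identity you yourself call the heart of the argument is asserted (``the outcome should be that\dots''), not derived, and the whole theorem lives in that identity. The known route is through the Kapranov--Sturmfels--Zelevinsky theorem that the Chow polytope (the weight polytope of the Chow form) of a projective toric variety is the secondary polytope of the configuration $kP\cap\ZZ^n$: its support function at a weight vector $(a_v)$ equals, up to sign conventions, $(n+1)!\int_{kP}g_a\,dV$, where $g_a$ is the piecewise-linear function cut out by the lower convex hull of the lifted points $(v,a_v)$; subtracting the trace normalization, using $\deg X_P=n!\,\mathrm{Vol}(kP)$ and $N_k+1=\chi(kP\cap\ZZ^n)$, gives the normalized weight as a positive multiple of $\mathrm{Vol}(kP)\sum_v a_v-\chi(kP\cap\ZZ^n)\int_{kP}g_a\,dV$. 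Note also that this expression involves $g_a$, not an arbitrary convex $f$, so passing to the stated criterion needs the two monotonicity remarks you glossed over: given convex $f$, setting $a_v=f(v)$ yields $g_a\geq f$ with equality at lattice points, so the weight inequality implies the inequality for $f$; conversely $g_a(v)\leq a_v$, so the inequality applied to $f=g_a$ recovers the weight inequality, and rational piecewise-linear convex functions are dense enough to conclude. Your final $G$-averaging step is correct and routine, since each $g\in G$ preserves $dV$, $\ZZ^n$ and $kP$. In short: the skeleton is the right one --- essentially Ono's --- but as written neither direction of the ``iff'' is proved, because the reduction step rests on a false assertion and the central weight computation is missing.
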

(As a remark, in the original literature, he used concave functions instead of convex functions, so the direction of inequality in this note is different.)\\

% Notice that this is not ture if we don't assume reflexive (i.e., Fano) (see claim 4.3. in \cite{LLSW19}). 

In this note, we mainly focus on symmetric reflexive toric varieties. One of the reasons is inspired by \cite{BS99}, which show that if a polytope is symmetric and reflexive, then it admits a K\"{a}hler Einstein metrics. With the result of \cite{Don02}, we can see that symmetric and reflexive implies K stability. So it is natural to ask if it is true for Chow stability. The second reason is, in this note, we defined a invariant called Chow-Futaki invariant, which is 
\[CF_P(a,k):=\dfrac{1}{\chi(kP \cap \ZZ^n)} \sum_{k p\in kP \cap \ZZ^n} a(p)-\dfrac{1}{Vol(P)}\int_P a(x) dV.\] As a rephrase of the corollary 4.7 in \cite{Ono13}, we can see that if $P$ is asymptotic Chow semistable, then this invariant will vanish for all $k>> k_0$ and for all affine function $a$. We can see that symmetric polytopes satisfy this criteria, so it is natural to study symmetric polytopes. Also,   by claim 4.3. in \cite{LLSW19}, there is an example which a symmetric non reflexive polytope is not asymptotic Chow  semistable.  On the other hand, by the results in \cite{LLSW19}, with the fact that $\PP^2$ and $\PP^1 \times \PP^1$ are asymptotic Chow  polystable, we can see that all 2 dimensional symmetric reflexive toric varieties are asymptotic Chow polystable. So this is natural to study symmetric reflexive polytopes.\\

However, in general, symmetric and reflexive is not enough, and a counter example is given by example \ref{symmetric reflexive non stable polytopes}. Notice that this is not an isolated example. Indeed we can construct many examples using proposition \ref{not stable}.\\

 Therefore, to ensure asymptotic Chow  polystabilities, we need more conditions on symmetric reflexive polytopes. One of the sufficient conditions is given by the following:
\begin{defi}[Definition \ref{regular triangulation}]
Let $P$ be an n dimensional integral convex polytopes on $\RR^n$. We say $P$ has regular boundary if for any $k$, there exists a triangulation of $\dd kP$ which every "triangle" is isomorphic (up to a translation and a subgroup of $SL(n-1, \ZZ)<SL(n, \ZZ)$) to  \[T_{n-1} := conv\{(0,...,0), e_1,...,e_{n-1}\},\]  the standard $(n-1)$ dimensional simplex, (i.e., the intersection between different $T_{n-1}^i$ are at the boundary) such that for any point $p\in \dd kP$, the number of simplex intersection with $p$ $\leq n!$, and this is the sub-triangulation of each face.
\end{defi}
Also, we define:
\begin{defi}[Definition \ref{special polytope}]
an integral convex polytopes on $\RR^n$ is called special if it is  reflexive, weakly symmetric and have regular boundary.
\end{defi}

And one of our main theorem is given by: 
\begin{theo}[Theorem \ref{special polytope is stable}]
Let $P$ be a special polytope, then $P$ is asymptotic chow polystable.
\end{theo}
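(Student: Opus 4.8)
The plan is to verify the criterion of Ono stated above for $X_{kP}$ at every level $k$, and to extract polystability (rather than mere semistability) from the weak symmetry. First I would record that weak symmetry makes the affine modes rigid: if $G<\SL(n,\ZZ)$ acts on $P$ with no nonzero $G$-invariant linear functional, then $\sum_{g\in G}g=0$ on $(\RR^n)^\vee$, so both $\int_{kP}a\,dV$ and $\sum_{v\in kP\cap\ZZ^n}a(v)$ vanish for every linear $a$; using reflexivity to place the barycenter at the origin, this gives $CF_P(a,k)=0$ for every affine $a$ and every $k$. This is simultaneously the affine obstruction whose vanishing is necessary for semistability and the ingredient that promotes semistability to \emph{polystability}. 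The remaining task is therefore the inequality
\[
\frac{1}{\mathrm{Vol}(kP)}\int_{kP}f\,dV\;\le\;\frac{1}{\chi(kP\cap\ZZ^n)}\sum_{v\in kP\cap\ZZ^n}f(v)
\]
for every convex $G$-invariant $f$ and every $k$.

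For this I would fix, for each $k$, a unimodular triangulation $\mathcal T$ of $kP$ that restricts on $\partial kP$ to the prescribed regular-boundary triangulation and uses \emph{every} lattice point of $kP$ as a vertex. Such an extension is available because reflexivity puts the origin in the interior with each facet at lattice distance $k$, so one may cone the given boundary simplices to $0$ and refine each cone by a standard unimodular subdivision, every piece then having normalized volume $1$. The local input is the convexity estimate: on a lattice simplex $\Delta=\mathrm{conv}(w_0,\dots,w_n)$ the convex $f$ lies below the affine interpolant $L$ of its vertex values, so
\[
\int_\Delta f\,dV\;\le\;\int_\Delta L\,dV\;=\;\frac{\mathrm{Vol}(\Delta)}{n+1}\sum_{i=0}^n f(w_i).
\]
Summing over $\Delta\in\mathcal T$ and writing $m(v)$ for the number of simplices of $\mathcal T$ meeting $v$, and using $\mathrm{Vol}(\Delta)=1/n!$, gives $\int_{kP}f\,dV\le \tfrac{1}{(n+1)!}\sum_v m(v)f(v)$.

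Since the number of top simplices is $n!\,\mathrm{Vol}(kP)$, one has $\sum_v m(v)=(n+1)!\,\mathrm{Vol}(kP)$, and the target inequality reduces to the signed-sum statement
\[
\sum_{v\in kP\cap\ZZ^n}\bigl(\bar m-m(v)\bigr)f(v)\;\ge\;0,\qquad \bar m:=\frac{(n+1)!\,\mathrm{Vol}(kP)}{\chi(kP\cap\ZZ^n)},
\]
whose weights have total mass $0$. The heuristic is exactly right: interior points carry the largest multiplicities and sit where a convex function is smallest, while points on $\partial kP$ carry deficient multiplicity (this is where the bound $m_\partial(v)\le n!$ from the regular-boundary hypothesis enters) and sit where $f$ is largest, so the negative weights land on small values and the positive weights on large ones. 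I would make this rigorous by bounding each interior value $f(v)$ by a convex combination of boundary values along the radial ``onion'' decomposition $kP\cap\ZZ^n=\bigsqcup_{j=0}^{k}\bigl(j\,\partial P\cap\ZZ^n\bigr)$ furnished by reflexivity, and then using that total mass $0$ together with the vanishing first moment (from weak symmetry) absorbs the linear terms; in the one-dimensional case this reduces precisely to the chord bound $2\sum_{\mathrm{int}}f(j)\le(2k-1)\bigl(f(-k)+f(k)\bigr)$, which holds by convexity exactly because $\sum_{\mathrm{int}}j=0$.

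The principal obstacle is this last step carried out uniformly in $k$: reconciling the boundary multiplicities $m_\partial$ (which the hypothesis controls) with the full $n$-dimensional multiplicities $m$, and showing the signed weight $\bar m-m(\cdot)$ pairs nonnegatively with \emph{every} convex $G$-invariant $f$ rather than merely asymptotically. Reflexivity is used to make the interior multiplicities uniform along the onion layers, the $\le n!$ bound guarantees no boundary point is overweighted, and the zero first moment from weak symmetry is exactly what rescues the linear modes, on which the two averages would otherwise agree only to leading order; combining these with the convexity estimate should close the inequality for all $k$, and hence yield asymptotic Chow polystability.
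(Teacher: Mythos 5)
Your argument breaks at the very first construction: you need a unimodular triangulation of $kP$ that uses every lattice point of $kP$ as a vertex and restricts on $\partial(kP)$ to the prescribed regular boundary triangulation, and you justify its existence by coning each boundary simplex to $0$ and ``refining by a standard unimodular subdivision.'' No such refinement exists in general. Since $P$ is reflexive, a unimodular $(n-1)$-simplex $T\subset\partial(kP)$ lies at lattice distance $k$ from the origin, so the cone $\mathrm{conv}(\{0\}\cup T)$ is a lattice simplex of normalized volume $k$; for $n=3$ such a cone can be a Reeve simplex (e.g.\ $\mathrm{conv}\{0,e_1,e_2,e_1+e_2+ke_3\}$, whose base $\mathrm{conv}\{e_1,e_2,e_1+e_2+ke_3\}$ is unimodular and at lattice distance $k$ from $0$), which contains \emph{no} lattice points other than its four vertices and therefore admits no unimodular triangulation at all. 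Existence of unimodular triangulations of dilates, let alone ones compatible with a prescribed boundary triangulation, is a notoriously hard problem in dimension $\geq 3$, and the paper never needs it. Second, even granting the triangulation, your reduction to $\sum_{v}(\bar m-m(v))f(v)\geq 0$ is not a reduction in difficulty: it is essentially a restatement of Ono's inequality after one application of the vertex bound, and you do not prove it --- the ``onion'' heuristic and the claim that interior multiplicities are ``uniform along the layers'' are left as exactly the open step, as your final paragraph concedes. Your one-dimensional chord bound does not indicate how to handle the fact that $m(v)$ for interior $v$ depends on the local combinatorics of the triangulation, with no canonical relation to the radial layers.

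The paper closes the argument by a different mechanism that avoids any interior unimodular triangulation. It proves that the shell integral $F(t)=\int_{t\partial P}f\,d\sigma$ is convex in $t$ (Lemma \ref{convexity on t direction}), so the trapezoid rule gives $\int_{kP}f\,dV\leq \tfrac12F(0)+F(1)+\cdots+F(k-1)+\tfrac12F(k)$; each shell integral is then bounded by the lattice sum over that shell using only the \emph{boundary} triangulation and the bound $n(p)\leq n!$ (Lemma \ref{boundary estimate by discrete sum}), and reflexivity makes the shells exhaust $kP\cap\ZZ^n$ (Lemma \ref{all point is on boundary}). This produces the explicit slack $-\tfrac12 f(0)-\tfrac12\sum_{p\in\partial kP\cap\ZZ^n}f(p)$ on top of $\sum_{kP\cap\ZZ^n}f(p)$. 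The remaining mismatch between the $1/\mathrm{Vol}(kP)$ and $1/\chi(kP)$ normalizations is handled by the crude cone bound (where the cones have normalized volume $k$, not $1$ --- the paper pays this factor rather than subdividing) together with the Ehrhart expansion $\chi(kP)-\mathrm{Vol}(kP)=\tfrac12\mathrm{Vol}(\partial P)k^{n-1}+O(k^{n-2})$ and the reflexivity identity $\mathrm{Vol}(\partial P)=n\,\mathrm{Vol}(P)$ (Lemmas \ref{number of integral points} and \ref{boundary volume and volume}), which yields the decisive numerical margin $\tfrac{n}{2(n+1)}+o(1)<\tfrac12$ for $k$ large. Note also that the paper only obtains the inequality for $k\geq C$, which is all that asymptotic Chow polystability requires; your insistence on every $k$ is both unnecessary and almost certainly out of reach by your scheme.
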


Notice that this condition is not necessary, as theorem \ref{general condition of Chow stable} gives another sufficient criteria to show when a toric variety $P$ is asymptotic chow polystable. The statement of the theorem is the following:
\begin{theo}[Theorem \ref{general condition of Chow stable}]
Let $P$ be a integral polytope which $0\in P^0$ such that all the Futaki-Ono invariant vainish, and we have a triangulation on $kP$ by $n$ simplexes and a triangulation on  $\dd kP$ by (n-1) simplexes, we denote
$n(p;k)$ be the number of n simplex attach the $p\in kP$ under the first triangulation, and $m(p;k)$ be the  number of (n-1) simplex attach to $p\in \dd kP$ under the second triangulation. 

Suppose $n(p;k)\leq (n+1)!$ for all $p\neq 0$ and \[ \left(\dfrac{n}{2}\right)m(p;k)< ((n+1)!-n(p;k))\] for all $k$ large and for all $p\in \dd kP$, then $P$ is  asymptotic Chow polystable.
\end{theo}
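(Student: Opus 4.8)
The plan is to reduce asymptotic Chow polystability to the single integral-versus-sum inequality supplied by Ono's criterion, and then to prove that inequality for \emph{all} convex functions (not merely $G$-invariant ones) by using the two triangulations to turn it into a statement about a signed measure supported on the lattice points of $kP$. Since we assume all the Futaki--Ono invariants vanish, the Chow--Futaki invariant $CF_P(a,k)$ is zero for every affine $a$ and all large $k$; granting the standard fact that, once this holds, establishing the convex inequality for all convex $f$ (strictly for non-affine $f$) promotes semistability to polystability, the whole problem becomes: for all large $k$ and every convex $f$ on $kP$,
\[
\frac{1}{Vol(kP)}\int_{kP} f\, dV \;\le\; \frac{1}{\chi(kP\cap\ZZ^n)}\sum_{v\in kP\cap\ZZ^n} f(v),
\]
with equality only for affine $f$.

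First I would bound the left-hand side simplex by simplex. On a single unimodular $n$-simplex $T$ with vertices $v_0,\dots,v_n$, a uniformly chosen point of $T$ has barycentric coordinates $(\lambda_0,\dots,\lambda_n)$ distributed uniformly on the probability simplex, so $E[\lambda_i]=\tfrac1{n+1}$; convexity (Jensen) applied to $f(\sum_i\lambda_i v_i)\le\sum_i\lambda_i f(v_i)$ and integration then give
\[
\int_T f\, dV \;\le\; \frac{Vol(T)}{n+1}\sum_{i=0}^n f(v_i) \;=\; \frac{1}{(n+1)!}\sum_{i=0}^n f(v_i),
\]
using $Vol(T)=1/n!$. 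Summing over all $n$-simplexes of the first triangulation and collecting the contribution of each lattice vertex $v$, which appears in exactly $n(v;k)$ simplexes, yields the key bound
\[
\int_{kP} f\, dV \;\le\; \frac{1}{(n+1)!}\sum_{v} n(v;k)\, f(v).
\]

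Next I would reformulate the target. Writing $N$ for the number of $n$-simplexes, so that $Vol(kP)=N/n!$ and $\sum_v n(v;k)=(n+1)N$, the desired inequality follows once I show $\sum_v c_v f(v)\ge 0$ for every convex $f$, where
\[
c_v \;=\; \frac{1}{\chi(kP\cap\ZZ^n)} - \frac{n(v;k)}{(n+1)!\,Vol(kP)}.
\]
A direct computation gives $\sum_v c_v = 0$, and, using the vanishing of the linear Futaki--Ono invariant together with the identity $\sum_{v\in T} v=(n+1)!\int_T x\,dV$, also $\sum_v c_v\, v = 0$; hence the signed measure $\mu=\sum_v c_v\,\delta_v$ annihilates all affine functions. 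By the duality between convex functions and balanced measures, $\mu(f)\ge 0$ for all convex $f$ is equivalent to $\mu$ lying in the cone generated by the Jensen measures $\sum_i \lambda_i\,\delta_{q_i}-\delta_p$ with $p=\sum_i\lambda_i q_i$, $\lambda_i\ge0$, $\sum_i\lambda_i=1$.

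The hard part is to exhibit this decomposition, and this is exactly where the two combinatorial hypotheses enter. Since $\chi>Vol(kP)$ for large $k$, an interior lattice point carries negative coefficient $c_v<0$ when $n(v;k)$ is near its maximum $(n+1)!$, the origin (excluded from the bound $n(p;k)\le(n+1)!$) carries the most negative mass, while boundary lattice points carry positive mass. I would transport the positive boundary mass inward along rays from $0$, writing each interior lattice point as a convex combination of $0$ and a boundary point and writing $0$ itself as a convex combination of boundary lattice points; the feasibility of this nonnegative transport is the crux. Comparing $\chi$ with $Vol(kP)$ through the boundary $(n-1)$-triangulation produces a surface defect that distributes to each boundary lattice point $p$ a demand of order $\tfrac n2\,m(p;k)$ (the Euler--Maclaurin factor $\tfrac n2$), whose available supply there is $(n+1)!-n(p;k)$; the hypothesis $\tfrac n2 m(p;k)<(n+1)!-n(p;k)$ is precisely supply strictly exceeding demand at every boundary point, which makes the transport realizable with a strictly positive surplus and hence forces $\mu(f)>0$ whenever $f$ is not affine. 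I expect the bookkeeping that matches the surface term of $\chi-Vol(kP)$ to the incidence numbers $m(p;k)$, and the verification that the resulting transport stays nonnegative at every boundary lattice point, to be the main obstacle.
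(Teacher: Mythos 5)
Your first half is correct and coincides with the paper's: the Jensen bound $\int_T f\,dV\le\frac{1}{(n+1)!}\sum_i f(v_i)$ on each unimodular simplex, summed to give $\int_{kP}f\,dV\le\frac{1}{(n+1)!}\sum_v n(v;k)f(v)$, and the observation that vanishing of the Futaki--Ono invariants makes the signed measure $\mu=\sum_v c_v\delta_v$ annihilate affine functions. But the second half is a plan, not a proof, and the missing step is exactly where the hypothesis $\frac{n}{2}m(p;k)<(n+1)!-n(p;k)$ has to do its work. You reformulate the problem as exhibiting a barycenter-preserving (martingale) transport from $\mu^-$ to $\mu^+$ and then assert that the hypothesis is ``precisely supply strictly exceeding demand,'' deferring the bookkeeping. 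This is not routine bookkeeping: under your ray-transport scheme the demand arriving at a boundary lattice point $p$ depends on \emph{global} data --- how many interior lattice points have convex decompositions involving $p$, which depends on the geometry of all of $kP$ --- whereas the available supply $c_p$ and the hypothesis are purely \emph{local} incidence data ($n(p;k)$, $m(p;k)$). Converting the global demand into a local condition is the actual content of the theorem, and nothing in the proposal accomplishes it; it is not even clear that a transport respecting the capacities $c_p$ exists under the stated hypotheses alone.

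The paper sidesteps the transport problem entirely by comparing both discrete sums to the same continuous measure, using a \emph{second} Jensen bound: coning the boundary triangulation over the origin gives
\begin{equation*}
\int_{kP}f\,dV\;\le\;\sum_{p\in\dd kP}\frac{m(p;k)\,k}{n!\,(n+1)}f(p)+Vol(kP)\frac{f(0)}{n+1},
\end{equation*}
which is applied to the defect term $\left(\frac{1}{Vol(kP)}-\frac{1}{\chi(kP)}\right)\int_{kP}f\,dV$, while the first bound is applied to the term $\frac{1}{\chi(kP)}\int_{kP}f\,dV$ (after normalizing $f(0)=0$, $f\ge 0$ via the vanishing invariants). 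Ehrhart's theorem, $\chi(kP)-Vol(kP)=\frac{n}{2}Vol(P)k^{n-1}+O(k^{n-2})$, makes the factor $k$ from the cone heights cancel against $\chi(kP)-Vol(kP)$, so the total coefficient of each boundary value $f(p)$ becomes $\frac{1}{(n+1)!\,\chi(kP)}\left[\left(\frac{n}{2}+O(k^{-1})\right)m(p;k)-\left((n+1)!-n(p;k)\right)\right]$, which is $\le 0$ for large $k$ exactly by the strict hypothesis. If you want to complete your argument, the shortest route is to replace the discrete transport by this integral comparison; as it stands, the proposal establishes the setup but not the inequality.
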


As a concrete example, we have:
\begin{coro}[Corollary \ref{D(X_i) are Chow stable}]
$D(X_8)$ and $D(X_9)$ are asymptotic Chow polystable.
\end{coro}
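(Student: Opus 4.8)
The plan is to apply the combinatorial criterion of Theorem \ref{general condition of Chow stable} directly to each of $P = D(X_8)$ and $P = D(X_9)$. As the discussion preceding this corollary signals, these are the promised examples that are asymptotic Chow polystable yet \emph{not} special: they will satisfy reflexivity and weak symmetry but fail to admit a regular boundary, so Theorem \ref{special polytope is stable} does not apply and the more flexible bound of Theorem \ref{general condition of Chow stable} is the right tool.

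First I would dispose of the two standing hypotheses. That $0 \in P^{0}$ holds for $D(X_8)$ and $D(X_9)$ is immediate from their explicit vertex coordinates. The vanishing of all Futaki--Ono invariants I expect to follow from weak symmetry, exactly as the Chow--Futaki invariant $CF_P(a,k)$ was seen to vanish for symmetric polytopes in the introduction: one checks that the lattice barycentre $\frac{1}{\chi(kP\cap\ZZ^n)}\sum_{v\in kP\cap\ZZ^n} v$ equals the volume barycentre $\frac{1}{Vol(kP)}\int_{kP} x\,dV$ for every $k$, which forces $CF_P(a,k)=0$ on all affine $a$.

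The substance is then to build, for each of the two polytopes, an explicit triangulation of $kP$ into $n$-simplexes together with a compatible triangulation of $\partial kP$ into $(n-1)$-simplexes whose local combinatorics stabilize as $k$ grows, and to read off the two counts. Writing $n(p;k)$ for the number of top-dimensional simplexes meeting a lattice point $p\neq 0$ and $m(p;k)$ for the number of boundary simplexes meeting $p\in\partial kP$, I would verify
\[
n(p;k)\le (n+1)!
\qquad\text{and}\qquad
\left(\frac{n}{2}\right)m(p;k) < (n+1)! - n(p;k),
\]
the first for every $p\neq 0$ and the second for every boundary lattice point and all large $k$. I would organize the verification by the dimension of the face of $kP$ on which $p$ lies: interior lattice points trivially satisfy the first bound with room to spare, while the right-hand quantity $(n+1)!-n(p;k)$ measures exactly the budget that the boundary count $m(p;k)$ must fit into.

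The main obstacle I anticipate lies precisely at the lattice points on the low-dimensional faces --- the edges and vertices of $D(X_i)$ --- where $m(p;k)$ is largest relative to the available budget. This is the regime in which a careless triangulation pushes $n(p;k)$ up to $(n+1)!$ and kills the strict inequality; indeed it is the failure to control this corner behaviour that prevents these polytopes from having a regular boundary in the sense of Definition \ref{regular triangulation}. The real work is therefore to choose the decomposition near the corners so that $n(p;k)$ stays strictly below $(n+1)!$ by enough to absorb $\frac{n}{2}m(p;k)$. Once such a decomposition is exhibited for $X_8$ and for $X_9$, Theorem \ref{general condition of Chow stable} applies verbatim and yields asymptotic Chow polystability.
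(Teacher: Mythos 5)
Your overall strategy is the same as the paper's: check the hypotheses of Theorem \ref{general condition of Chow stable} for $D(X_8)$ and $D(X_9)$ and invoke that theorem. Your preliminary reductions are also fine: $0$ is interior in both cases, and since $X_i$ symmetric implies $D(X_i)$ symmetric, both the lattice barycentre and the volume barycentre of $kD(X_i)$ are fixed points of the symmetry group and hence equal to $0$, so all Futaki--Ono invariants vanish.

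However, there is a genuine gap: the entire mathematical content of this corollary is the explicit construction of the triangulations and the numerical verification of the inequality, and your proposal stops exactly where that work begins (``The real work is therefore to choose the decomposition\dots Once such a decomposition is exhibited\dots''). You never exhibit a decomposition, never compute $n(p;k)$ or $m(p;k)$, and never check the inequality, so what you have is a plan, not a proof. This cannot be waved through, because the criterion is quantitatively tight here. What the paper actually does: triangulate $X_8$ into $8$ and $X_9$ into $9$ lattice triangles, cone each triangle to $(0,0,1)$ and $(0,0,-1)$ to cut $D(X_i)$ into $2i$ simplexes, and refine each dilated simplex $kT_3$ by the standard translation-invariant triangulation of Lemma \ref{triangulation of simplex}; by Lemma \ref{triangulation of D} this yields $n(p;k)=m(p;k)=i$ at the apexes $(0,0,\pm k)$, while $n(p;k)\le 12$ and $m(p;k)\le 6$ at all other boundary lattice points (and $n(p;k)\le 24$ in the interior). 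The generic boundary check is then
\[
\frac{3}{2}\cdot 6 \;=\; 9 \;<\; 24-12 \;=\; 12,
\]
and the apex check is
\[
\frac{3}{2}\, i \;<\; 24 - i \quad\Longleftrightarrow\quad i \;<\; \frac{48}{5} \;=\; 9.6,
\]
which holds for $i=8,9$ --- but only barely for $i=9$. This razor-thin margin (a polytope with $10$ faces meeting the apex would fail the criterion) is precisely why the verification cannot be deferred: until the triangulation is produced and the counts are computed, there is no evidence that the hypotheses of Theorem \ref{general condition of Chow stable} can be satisfied at all for these two polytopes.
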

This example shows that there are non special symmetric reflexive polytopes that are asymptotic Chow polystable.\\

In the last section, we provide examples which are asymptotic Chow polystable, mainly on dimension 3, and have two class of examples for higher dimensional. Notice that besides $D(X_8)$ and $D(X_9)$, the remaining examples are special.  Also, the corresponding varieties of the examples are given in the appendix. 

\begin{ack}
First, the author appreciate Universidad aut\'{o}noma de Madrid (UAM) and Instituto de Ciencias Matem\'{a}ticas (ICMAT) to provide a excellent environment for the author to focus on the research, and the current advisor Mario Garcia-Fernandez and his post-doc and students for having a meaning discussion and advise. Also, the author would like to thank Jacob Sturm and Xiaowei Wang for suggesting the author to start this topic, and give suggestion on the further research which yield this research. The last but not least, the author would like to thank the classmates from the Chinese University of Hong Kong to discuss mathematics and giving valuable advises.\\

The research is funded by Grant EUR2020-112265 funded by MCIN/AEI/10.13039/501100011033 and by 
the European Union NextGenerationEU/PRTR, and Grant CEX2019-000904-S 
funded by MCIN/AEI/10.13039/501100011033.
\end{ack}

\section{Chow stability of toric variety and criteria}
Recall that by Fataki, Ono (\cite{Fut04}, \cite{Ono13}, \cite{OSY12}, and also see \cite{LLSW19}), a toric variety $X_P$ is Asymptotic Chow semistable if there exists $C$ such that for any $k\geq C$, and for any convex $G$ invariant function $f$ for the corresponding polytopes $kP$, we have 
\begin{equation}\label{Chow inequality}
\dfrac{1}{Vol(kP)}\int_{kP} f dV\leq \dfrac{1}{\chi(kP \cap \ZZ^n)} \sum_{k p\in kP \cap \ZZ^n} f(p),\end{equation} and $X_P$ is polystable if the equality holds only when $v$ is affine. (In \cite{Ono13} and \cite{LLSW19}, the inequality is in opposite side as the input are concave functions.) Here $G<SL(n, \ZZ)$ is the biggest group fixing $P$, which is a discrete group.

Notice that if there exists a $\CC^*$ action acting on $X_P$, the toric variety corresponding to $P$, then it corresponds to an affine function on $P$ (See Donaldson toric variety). So we can define 
\begin{defi}\label{Chow-Futaki invariant}
Let $P$ be a integral convex polytopes. Then we define Futaki-Ono invariant of an affine function $v(x)=a_1x_1+\cdots +a_nx_n+a_0$ is given by 
\[FO_P(a,k):=\dfrac{1}{\chi(kP \cap \ZZ^n)} \sum_{k p\in kP \cap \ZZ^n} a(p)-\dfrac{1}{Vol(P)}\int_P a(x) dV.\]
\end{defi} 
We can rephrase corollary 4.7 in \cite{Ono13} as the following:
\begin{lemm}[corollary 4.7, \cite{Ono13}; also \cite{Fut04}]
Suppose $P$ is Asymptotic Chow semistable. Then there exists $C$ such that for any $k\geq C$, and for any affine function $a$ on $kP$, we have 
\[FO_P(a,k)=0.\]
\end{lemm}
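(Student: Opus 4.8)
\emph{Proof proposal.} The plan is to exploit the fact that an affine function is simultaneously convex and concave, so that the single inequality \eqref{Chow inequality} applied to both $a$ and $-a$ pins $FO_P(a,k)$ down to zero. First I would observe that, after the change of variables $y=kx$ identifying $\int_{kP}$ with $\int_P$ and the lattice sum over $kP\cap\ZZ^n$ with the evaluation at the rescaled points $p=v/k\in P\cap\frac1k\ZZ^n$, the right-hand side minus the left-hand side of \eqref{Chow inequality} is exactly $FO_P(f,k)$ for the affine datum $f$. Thus semistability says $FO_P(f,k)\ge 0$ for every convex $G$-invariant $f$ and every $k\ge C$. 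Since a $G$-invariant affine function $b$ is convex, applying this to $b$ gives $FO_P(b,k)\ge 0$, and applying it to the (also $G$-invariant, affine, hence convex) function $-b$ gives $FO_P(-b,k)=-FO_P(b,k)\ge 0$; together these force $FO_P(b,k)=0$ for every $G$-invariant affine $b$ and every $k\ge C$.

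The remaining, and genuinely essential, point is to pass from $G$-invariant affine functions to \emph{all} affine functions, since \eqref{Chow inequality} is only assumed for $G$-invariant inputs. For this I would symmetrize: given an arbitrary affine $a$, set
\[\bar a(x):=\frac{1}{|G|}\sum_{g\in G}a(g^{-1}x).\]
Because each $g\in G$ acts linearly, $\bar a$ is again affine, and it is manifestly $G$-invariant. The key claim is that $FO_P(a,k)=FO_P(\bar a,k)$. This follows because $G<SL(n,\ZZ)$ both fixes $P$ and preserves $\ZZ^n$, hence preserves $P\cap\frac1k\ZZ^n$ and, having determinant one, the Lebesgue measure on $P$; consequently each $g$ permutes the summation points and fixes the integration domain and volume form, so $\sum_{p}a(g^{-1}p)=\sum_p a(p)$ and $\int_P a(g^{-1}x)\,dV=\int_P a\,dV$ for every $g$, and averaging over $G$ yields $FO_P(\bar a,k)=FO_P(a,k)$.

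Combining the two steps finishes the argument: for any affine $a$ and any $k\ge C$ we have $FO_P(a,k)=FO_P(\bar a,k)=0$, where the last equality is the convex/concave sandwich applied to the $G$-invariant affine function $\bar a$. The main obstacle I anticipate is not any single estimate but the bookkeeping in the first step, namely correctly matching the two normalizations (the polytope-scale objects $Vol(P)$ and $P\cap\frac1k\ZZ^n$ appearing in $FO_P$ against the $kP$-scale objects in \eqref{Chow inequality}) so that the difference of the two sides of the Chow inequality is literally $FO_P$, together with the verification that both convexity and $G$-invariance are preserved under the rescaling $y=kx$. Once these identifications are made transparent, the symmetrization and the convex/concave duality are routine.
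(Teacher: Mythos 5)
Your proposal is correct, but note that the paper does not actually prove this lemma: it is imported as Corollary 4.7 of \cite{Ono13} (see also \cite{Fut04}), where it is established by computing Chow weights of the $\CC^*$-actions generated by affine functions, not by way of any convex-function criterion. Your derivation is therefore a genuinely different, self-contained route inside the paper's own framework, taking the criterion of Theorem 1.1 (restated as inequality (\ref{Chow inequality})) as the input. The pieces all check out: under the rescaling $y=kx$ the two normalizations match (so the right-hand side minus the left-hand side of (\ref{Chow inequality}) is exactly $FO_P(a,k)$ for $a(x)=f(kx)$), and affineness, convexity and $G$-invariance are preserved because $G$ acts linearly; the sandwich applied to $\pm\bar a$ then forces $FO_P(\bar a,k)=0$ for $k\geq C$; and the symmetrization identity $FO_P(a,k)=FO_P(\bar a,k)$ holds because $G<SL(n,\ZZ)$ fixes $P$, permutes $P\cap\frac{1}{k}\ZZ^n$, preserves Lebesgue measure, and $FO_P(\cdot,k)$ is linear in its argument. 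The symmetrization is precisely the step this framework demands, since the paper's criterion quantifies only over $G$-invariant convex functions. As for what each approach buys: yours makes the lemma a consequence of the stated Theorem 1.1 rather than an external citation, but it thereby inherits that theorem's particular $G$-invariant formulation as a black box, whereas the cited proof of Ono and Futaki is independent of any such reduction (in the original literature the criterion is phrased for all convex piecewise-linear functions). A pleasant sanity check of your argument: when $P$ is symmetric, $\bar a$ is constant, so $FO_P(a,k)=FO_P(\bar a,k)=0$ with no stability hypothesis at all, recovering the paper's remark that symmetric implies weakly symmetric.
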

Recall that
\begin{defi}
An integral convex  polytopes $P$ is symmetric if there is exactly one fix point (which must be 0) of the symmetric group $G< SL(n, \ZZ)$ acting on $P$. 
\end{defi}
In particular, any $G$ invariant affine function on a symmetric polytopes must be constant, hence it must vanish.  so we define the following: 
\begin{defi}
A polytopes $P$ is weakly symmetric if for any $k$, and for any affine function $a$ on $kP$, 
\[FO_P(a,k)=0.\] 
\end{defi}

\begin{rema}
Notice that this condition is stronger than assume $FO_P(a,k)=0$ for all $k$ large. There are two questions arise. 
\begin{enumerate}
\item It is easy to see that $P$ is symmetric implies $P$ is weakly symmetric. But is the opposite true? 
\item If $P$ is not weakly symmetric, does this imply $P$ is not asymptotic Chow semistable? 
\end{enumerate}
Notice that the K stability version is not true, as there are non symmetric K stable toric variety, namely, the toric Del Pezzo surface of degree 1.  However, it is not weakly symmetric and not  asymptotic Chow semistable (See \cite{LLSW19}, the section of $X_1$).
\end{rema}
\begin{lemm}
An weakly  symmetric integral polytopes $P$ is  (asymptotic ) Chow semistable if for any $k\in \NN$ ($k \geq C$ for some fix $C$), and for any convex function $f$ which $\displaystyle \min_{x\in kP}f(x)=f(0)=0$, we have 
\[\dfrac{1}{Vol(kP)}\int_{kP} f dV\leq \dfrac{1}{\chi(kP \cap \ZZ^n)} \sum_{k v\in kP \cap \ZZ^n} f(v),\] 
\end{lemm}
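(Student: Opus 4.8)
The plan is to recast the Chow inequality as the nonnegativity of a single linear functional and to show that, for a weakly symmetric polytope, every convex test function can be normalized to satisfy $\min_{kP} f = f(0)=0$ without changing the value of that functional, so that the hypothesis applies verbatim. Concretely, for each $k$ I would set
\[
L_k(f) := \frac{1}{\chi(kP\cap\ZZ^n)}\sum_{v\in kP\cap\ZZ^n} f(v) - \frac{1}{Vol(kP)}\int_{kP} f\, dV,
\]
so that by the Futaki--Ono criterion \eqref{Chow inequality} the variety $X_P$ is (asymptotic) Chow semistable precisely when $L_k(f)\ge 0$ for every convex $G$-invariant $f$ on $kP$ and every $k$ (resp.\ $k\ge C$). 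The functional $L_k$ is manifestly linear in $f$, and the first step is to record that it annihilates affine functions: for affine $a$ on $kP$ the substitution $v=kx$ identifies $L_k(a)$ with the Futaki--Ono invariant $FO_P(\hat a,k)$ of $\hat a(p):=a(kp)$ (the volume factors $k^n$ cancel), and this vanishes by the definition of weak symmetry. In particular $L_k$ is unchanged under adding any affine function, and $L_k(c)=0$ for constants.

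Next I would reduce an arbitrary convex $f$ to the normalized class. Assuming $0\in kP$, I choose a subgradient $\xi$ of $f$ at the origin and let $a(x):=f(0)+\langle \xi,x\rangle$ be the associated supporting affine function, so that $a\le f$ on $kP$ with equality at $0$. Then $\tilde f:=f-a$ is convex, nonnegative, and satisfies $\tilde f(0)=0$; since $\tilde f\ge 0$ this gives $\min_{x\in kP}\tilde f(x)=\tilde f(0)=0$, which is exactly the normalization required by the hypothesis. Because $a$ is affine, linearity together with the vanishing established above yields $L_k(f)=L_k(\tilde f)+L_k(a)=L_k(\tilde f)$.

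Applying the hypothesis to $\tilde f$ gives $L_k(\tilde f)\ge 0$, hence $L_k(f)\ge 0$ for every convex $f$ on $kP$ (in particular for every $G$-invariant one), so \eqref{Chow inequality} holds for all $k$ (resp.\ $k\ge C$); by the Futaki--Ono criterion this is precisely (asymptotic) Chow semistability of $X_P$. The step requiring the most care is the existence of the supporting affine function at $0$: this is the standard fact that a finite convex function admits a subgradient at an interior point of its domain, so one should assume $0\in (kP)^\circ$ (as holds for the reflexive polytopes of interest), while if $0\in\partial kP$ one instead invokes a supporting hyperplane of the epigraph of $f$ at $(0,f(0))$. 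The only remaining point is the bookkeeping identifying $L_k$ on affine functions with $FO_P(\cdot,k)$, which is the routine rescaling computation indicated above; I expect no genuine difficulty there, and the real content of the lemma is the observation that subtracting a supporting affine function is a free move for weakly symmetric polytopes.
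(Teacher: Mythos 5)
Your proposal is correct and follows essentially the same route as the paper: subtract a supporting affine function of $f$ at the origin, observe that weak symmetry makes the Chow functional $L_k$ invariant under adding affine functions, and then apply the hypothesis to the normalized function $f-a$. Your write-up merely makes explicit two details the paper leaves implicit (the existence of the subgradient at $0$ and the rescaling identifying $L_k$ on affine functions with $FO_P(\cdot,k)$), so there is no substantive difference.
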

\begin{proof}
For any convex function $f$, there exists an affine function $a_k$ such that \[\min_{x\in kP}(f(x)-a(x))=f(0)=0.\] Now, 
\[\dfrac{1}{\chi(kP \cap \ZZ^n)} \sum_{k p\in kP \cap \ZZ^n} f(p)-\dfrac{1}{Vol(P)}\int_P f(x) dV=\dfrac{1}{\chi(kP \cap \ZZ^n)} \sum_{k p\in kP \cap \ZZ^n} (f-a)(p)-\dfrac{1}{Vol(P)}\int_P (f-a)(x) dV.\] Result follows.
\end{proof}
\section{Some special class of toric varieties}
\subsection{Product class}
The first class of polytopes is in the form of $P_1\times \cdots \times P_r$, where $P_1,\cdots, P_r$ and Chow stable.
\begin{lemm}\label{product convex function}
Let $P_1$, $P_2$ are bounded convex sets. Then for any f is a convex function on $P_1\times P_2$,  
$f_{P_2}(x):=\int_{P_2}f(x,y)dV_y$ is a convex function on $P_1$
\end{lemm}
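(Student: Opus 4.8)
The plan is to verify the defining inequality of convexity directly, reducing the statement about the partial integral $f_{P_2}$ to the pointwise (fiberwise) convexity of $f$ on the product. First I would fix two points $x_0,x_1\in P_1$ and a parameter $t\in[0,1]$, and set $x_t:=tx_0+(1-t)x_1$, which lies in $P_1$ by convexity of $P_1$. The goal is to show
\[
f_{P_2}(x_t)\ \le\ t\,f_{P_2}(x_0)+(1-t)\,f_{P_2}(x_1).
\]

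The key observation is that convex combinations in the first factor lift to convex combinations in the product: for every fixed $y\in P_2$ one has the identity $(x_t,y)=t\,(x_0,y)+(1-t)\,(x_1,y)$ in $P_1\times P_2$, and both $(x_0,y)$ and $(x_1,y)$ belong to $P_1\times P_2$. Applying the convexity of $f$ on $P_1\times P_2$ to this segment gives, for each such $y$, the pointwise estimate
\[
f(x_t,y)\ \le\ t\,f(x_0,y)+(1-t)\,f(x_1,y).
\]
Then I would integrate this inequality over $y\in P_2$ and use linearity and monotonicity of the integral: the left-hand side integrates to $f_{P_2}(x_t)$, while the right-hand side integrates to $t\,f_{P_2}(x_0)+(1-t)\,f_{P_2}(x_1)$, which is exactly the desired inequality. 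This completes the argument, and the structure is simply ``pointwise convexity in $y$-slices $+$ integration.''

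The step I expect to require the most care is not the convexity manipulation, which is essentially formal, but the well-definedness of $f_{P_2}$ itself, i.e. that $y\mapsto f(x,y)$ is measurable and integrable over the bounded set $P_2$ for each fixed $x\in P_1$. Here I would invoke the standard fact that a (finite) convex function on a convex set is continuous on the relative interior, hence measurable, and that boundedness of $P_2$ together with the application setting (convex functions on polytopes that are finite on the polytope) guarantees the integral is finite, so that all three integrals above exist. With integrability in hand, the monotonicity of the Lebesgue integral applies verbatim to the pointwise inequality, and no further analytic input is needed.
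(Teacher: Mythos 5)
Your proof is correct and takes essentially the same approach as the paper: both arguments apply the convexity of $f$ along the segment $t\mapsto (tx_0+(1-t)x_1,\,y)$ for each fixed $y\in P_2$ and then integrate the resulting pointwise inequality over $P_2$. Your additional attention to measurability and integrability of $y\mapsto f(x,y)$ is a point the paper passes over silently (and your write-up avoids the typos in the paper's displayed estimate), but the core argument is identical.
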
 
\begin{proof}
Consider $f_{P_2}(tx_1+(1-t)x_2)$, where $0\leq t\leq 1$. 
\begin{align*}
f_{P_2}(tx_1+(1-t)x_2)=&\int_{P_2}f(tx_1+(1-t)x_2,y)dV_y\\
\leq &\int_{P_2}tf(tx_1,y)dV_y+\int_{P_2}f((1-t)x_2,y)dV_y\\
=& tf_{P_2}(x_1)+(1-t)f_{P_2}(x_2).
\end{align*}
\end{proof}
\begin{prop}\label{product of chow stable polytopes}
Let $P_1$, $P_2$ are integral convex polytopes. $P_1\times P_2$ is (asymptotic) chow polystable (semistable) iff $P_1$ and $P_2$ are (asymptotic) Chow polystable (semistable). 
\end{prop}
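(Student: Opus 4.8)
The plan is to strip away the symmetry group, prove a Fubini-type splitting of the Chow discrepancy of a product into two nonnegative pieces, and then handle the equality case by a convexity rigidity argument. First I would reduce both notions to statements about \emph{all} convex functions. For an integral polytope $P\subset\RR^n$ write
\[\Psi_P(f,k):=\frac{1}{\chi(kP\cap\ZZ^n)}\sum_{v\in kP\cap\ZZ^n}f(v)-\frac{1}{\mathrm{Vol}(kP)}\int_{kP}f\,dV,\]
so inequality \eqref{Chow inequality} reads $\Psi_P(f,k)\ge0$. I claim $X_P$ is asymptotic Chow semistable iff $\Psi_P(f,k)\ge0$ for all large $k$ and \emph{every} convex $f$, and polystable iff moreover $\Psi_P(f,k)=0$ forces $f$ affine. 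Given a convex $f$, its $G$-average $\bar f=\frac1{|G|}\sum_{g\in G}f\circ g$ is convex and $G$-invariant, and since $G<SL(n,\ZZ)$ preserves $kP$, the lattice $kP\cap\ZZ^n$, and volume, one checks $\Psi_P(\bar f,k)=\Psi_P(f,k)$; for the rigidity, if each $f\circ g$ is extremal ($\Psi_P=0$) and their average is affine, then each $f\circ g$ minus that affine function is convex with vanishing sum, hence is itself affine. This frees me from ever identifying the symmetry group of $P_1\times P_2$.

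Next, using $k(P_1\times P_2)=kP_1\times kP_2$, the multiplicativities $\chi=\chi_1\chi_2$ and $\mathrm{Vol}=\mathrm{Vol}_1\mathrm{Vol}_2$, and Fubini, I would telescope
\[\Psi_{P_1\times P_2}(f,k)=\Psi_{P_1}(\widehat f,k)+\frac{1}{\mathrm{Vol}(kP_1)}\int_{kP_1}\Psi_{P_2}(f(x,\cdot),k)\,dV_x,\]
where $\widehat f(x):=\frac{1}{\chi(kP_2\cap\ZZ^{n_2})}\sum_{w}f(x,w)$ is convex on $kP_1$ as a finite sum of convex slices (this is the lattice analogue of Lemma \ref{product convex function}, which gives the same for the integral marginal). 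If $P_1$ and $P_2$ are semistable both terms are nonnegative, so the product is semistable. The converse is the lifting trick: for convex $g$ on $kP_1$ the function $f(x,y)=g(x)$ is convex and satisfies $\Psi_{P_1\times P_2}(f,k)=\Psi_{P_1}(g,k)$, and symmetrically for $P_2$.

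For polystability, equality forces both summands above to vanish. Because the second is an \emph{integral} over $kP_1$ of a nonnegative, continuous integrand, $\Psi_{P_2}(f(x,\cdot),k)=0$ at \emph{every} $x$, so polystability of $P_2$ makes each slice $f(x,\cdot)$ affine in $y$; the symmetric splitting makes $f(\cdot,y)$ affine in $x$ for every $y$. A function that is separately affine in each block of variables must have the form $f(x,y)=\la Mx,y\ra+\la a,x\ra+\la b,y\ra+c$, whose Hessian is the symmetric block matrix with zero diagonal blocks and off-diagonal blocks $M,M^{\top}$; this is positive semidefinite only when $M=0$. Hence joint convexity removes the bilinear term and $f$ is affine, so the product is polystable, and the converse is again the lift $f(x,y)=g(x)$ fed into polystability of the product.

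I expect this last rigidity step to be the crux. One must secure affineness of the slices at \emph{all} points, which is exactly why the averaging over the first factor has to be taken as the integral rather than the lattice sum (a lattice-sum decomposition would only pin down the slices over lattice points). One must then promote separate affineness in each variable to honest joint affineness, where the indefiniteness of the cross Hessian is precisely what forbids a surviving bilinear term; the symmetrization step of the first paragraph is a second delicate point, since it is what guarantees that an extremal convex function is genuinely affine and not merely affine after averaging.
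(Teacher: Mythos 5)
Your proposal is correct, and its semistability core coincides with the paper's: the paper also proves the product inequality by Fubini --- it integrates out the second factor, invokes Lemma \ref{product convex function} for convexity of the integral marginal, applies the $P_1$-inequality to that marginal and then the $P_2$-inequality to the slices at the lattice points of $kP_1$ --- and it proves the converse by exactly your lifting trick $f(x,y)=g(x)$. You differ in three substantive ways, all to your advantage. (i) The paper silently quantifies over \emph{all} convex functions, although Ono's criterion only quantifies over $G$-invariant ones and the symmetry group of $P_1\times P_2$ can strictly contain $G_1\times G_2$; your averaging reduction, together with the remark that a sum of convex functions which is affine forces each summand to be affine, is the justification the paper omits. (ii) You recast the two-step estimate as an exact identity, $\Psi_{P_1\times P_2}(f,k)=\Psi_{P_1}(\hat f,k)+\mathrm{Vol}(kP_1)^{-1}\int_{kP_1}\Psi_{P_2}(f(x,\cdot),k)\,dV_x$, taken in the mirror order (lattice marginal over $P_2$, integral average of slice discrepancies) to the paper's chain of inequalities (integral marginal, lattice average of slices). (iii) That ordering is what lets you treat polystability: the paper's proof establishes only the semistable inequality and never discusses the equality case, so the ``polystable'' clause of the proposition is not actually argued there; your rigidity step --- vanishing of the integral of a nonnegative slice discrepancy forces every slice to be affine, separate affineness gives a biaffine $f$, and joint convexity kills the cross term because $\left(\begin{smallmatrix}0 & M\\ M^{\top} & 0\end{smallmatrix}\right)$ is indefinite unless $M=0$ --- supplies precisely the missing content. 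One small patch: a convex function on a closed polytope is guaranteed continuous only on the relative interior, so you should first conclude $\Psi_{P_2}(f(x,\cdot),k)=0$ for interior $x$ and handle boundary slices by a limiting argument (or restrict throughout to continuous convex test functions, as is standard in this setting).
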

\begin{proof}
Suppose for any $k\geq C_1$ and  $k \geq C_2$ and for any convex function $f_1, f_2$ on $P_1$ and $P_2$, we have
\[\dfrac{1}{Vol(kP_1)}\int_{kP_1}f_1(x)dV\leq \dfrac{1}{\chi(kP_1)}\sum_{p\in P_1}f_2(p);\] 
\[\dfrac{1}{Vol(kP_2)}\int_{kP_2}f_2(x)dV\leq \dfrac{1}{\chi(kP_2)}\sum_{p\in P_2}f_2(p).\] Then for any $k\geq \max\{C_1,C_2\}$ , and for any convex function $f$, we have
\begin{align*}
&\dfrac{1}{Vol(kP_1\times kP_2)}\int_{kP_1\times kP_2}f(x,y)dV_xdV_y\\
=&\dfrac{1}{Vol(kP_1)}\int_{kP_1} \dfrac{1}{Vol(kV_2)}\int_{kP_2}f(x,y)dV_ydV_x\\
=&\dfrac{1}{Vol(kP_1)}\int_{kP_1} \dfrac{1}{Vol(kV_2)} f_{kP_2}(x)dV_x \qquad (\text{lemma }\ref{product convex function})\\
\leq &\dfrac{1}{Vol(kP_2)} \dfrac{1}{\chi(kP_1)}\sum_{p_1\in kP_1\cap \ZZ^{n_1}}f_{P_2}(p_1)\\
=&\dfrac{1}{\chi(kP_1)}\sum_{p_1\in kP_1\cap \ZZ^{n_1}}\left(\dfrac{1}{Vol(kP_2)}\int_{kP_2}f(p_1,y)dV_y\right)\\
\leq & \dfrac{1}{\chi(kP_1)}\sum_{p_1\in kP_1\cap \ZZ^{n_1}}\dfrac{1}{\chi(kP_2)}\sum_{p_2\in kP_2\cap \ZZ^{n_2}}f(p_1,p_2)\\
=& \dfrac{1}{\chi(k(P_1\times P_2))}\sum_{p\in k(P_1\times P_2)\cap \ZZ^{n_1}\times \ZZ^{n_2}}f(p).
\end{align*} 
In particular, if $C_1=C_2=1$, then this inequality holds for any convex function and any $k$.\\

For the opposite, without loss of generality, assume $P_1$ is unstable. Then there exists a sequence of convex functions $f_k$ on $kP_1$ such that for any $k$ large, 
\[\dfrac{1}{Vol(kP_1)}\int_{kP_1}f_k(x)dV \geq \dfrac{1}{\chi(kP_1)}\sum_{p\in kP_1\cap \ZZ^{n_1}}f(p).\] Then define $f_k:kP_1\times kP_2\rightarrow \RR$ such that
\[f_k(x,y)=f_k(x).\] Then 
\begin{align*}
&\dfrac{1}{Vol(kP_1\times kP_2)}\int_{kP_1\times kP_2}f_k(x,y)dV=\dfrac{1}{Vol (kP_1)} \int_{kP_1}f_k(x)dv \geq   \dfrac{1}{\chi(kP_1)}\sum_{p\in kP_1\cap \ZZ^{n_1}}f_k(p)\\
=& \dfrac{1}{\chi(kP_1\times kP_2)}\sum_{p\in kP_1\cap \ZZ^{n_1}} \chi(kP_2)f_k(p)=\dfrac{1}{\chi(k(P_1\times P_2))}\sum_{p\in k(P_1\times P_2)\cap \ZZ^{n_1}\times \ZZ^{n_2}} f_k(p). \end{align*}
\end{proof}

The following class of polytopes may not be chow stable in general. Indeed, we will give a criteria which this class must not be asymptotic Chow polystable.

As a quick check, we have a computational proof of the following well known fact:
\begin{coro}
$((\PP^1)^n, -K_{(\PP^1)^n})$ is asymptotic Chow polystable.
\end{coro}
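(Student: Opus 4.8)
The plan is to reduce the claim to the one-dimensional case via the product structure and then verify the base case by a direct convexity estimate. First I would identify the polarized variety $((\PP^1)^n, -K_{(\PP^1)^n})$ with the product polytope $[-1,1]^n = [-1,1]\times\cdots\times[-1,1]$, since the anticanonical polytope of $(\PP^1, -K_{\PP^1})$ is the reflexive interval $[-1,1]$ and anticanonical polytopes multiply under products. By Proposition \ref{product of chow stable polytopes}, applied inductively on the $n$ factors, $[-1,1]^n$ is asymptotic Chow polystable if and only if each factor $[-1,1]$ is. Thus the entire statement reduces to proving that the interval $P=[-1,1]$ is Chow polystable for every $k$.

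For the base case, set $kP=[-k,k]$, so that the lattice points are $\{-k,-k+1,\ldots,k\}$, giving $\chi(kP\cap\ZZ)=2k+1$ and $\mathrm{Vol}(kP)=2k$. For a convex function $f$ on $[-k,k]$ I would combine two classical convexity estimates. Writing $S=\sum_{j=-k}^{k}f(j)$, $I=\int_{-k}^{k}f\,dx$, and $E=f(-k)+f(k)$, the trapezoidal rule bounds the integral from above,
\[ I \;\leq\; S \;-\; \tfrac{1}{2}E, \]
while comparison of $f$ with the chord through its two endpoints bounds the sum from above,
\[ S \;\leq\; \tfrac{2k+1}{2}\,E. \]
A short manipulation shows that these two inequalities together imply the Chow inequality $\tfrac{1}{2k}I\leq\tfrac{1}{2k+1}S$: substituting the first into $(2k+1)I\leq(2k+1)\bigl(S-\tfrac12 E\bigr)$ reduces the desired bound $(2k+1)I\leq 2kS$ to exactly $S\leq\tfrac{2k+1}{2}E$, which is the second. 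This establishes semistability of $[-1,1]$ at level $k$.

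For polystability I would examine the equality case. Since the Chow inequality is chained through the trapezoidal and chord bounds, equality in it forces both bounds to be equalities. Equality in the trapezoidal rule forces $f$ to be affine on each subinterval $[j,j+1]$, and equality in the chord comparison forces each $f(j)$ to lie on the chord joining the endpoints; together these force $f$ to be globally affine. This gives Chow polystability of $[-1,1]$ at every level, and Proposition \ref{product of chow stable polytopes} then upgrades it to $[-1,1]^n$. The only genuinely nontrivial point, which I expect to be the main step, is the clean bookkeeping verifying that the two elementary one-dimensional inequalities combine precisely into the required ratio inequality and that equality propagates back through the chain; once that is in place, the product reduction and the equality analysis are routine.
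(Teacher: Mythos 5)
Your proposal follows the same route as the paper's proof: identify $((\PP^1)^n, -K_{(\PP^1)^n})$ with the cube $[-1,1]^n$ and reduce, via Proposition \ref{product of chow stable polytopes} applied inductively, to the polystability of the interval $[-1,1]$. The only difference is that the paper simply asserts the base case (quoting \cite{LLSW19} elsewhere for the one-dimensional statement), whereas you prove it directly; your trapezoid-plus-chord argument, including the equality analysis showing that equality forces $f$ to be affine, is correct and makes the corollary self-contained.
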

\begin{proof}
$[-1,1]$ is asymptotic Chow polystable. A direct consequence of proposition \ref{product of chow stable polytopes} implies $[-1,1]^n$ is asymptotic Chow polystable.
\end{proof}
\subsection{Symmetric Double cone type}
We now consider a class of examples that can construct unstable polytopes. Also, it is one of the non trival and easiest class to study.
\begin{defi}
Let $P$ be a n dimensional integral polytopes. Then we define the double cone
\[D(P):=Conv\{0,...,0,1), (0,...,0,-1), (p,0)| p\in P\}.\]
\end{defi}
Notice that 
\[kD(P)=\{(p, q)\in \RR^n\times \RR| p\in (k-q)P, -k\leq q\leq k\}.\]

\begin{lemm}
Suppose $P$ is symmetric, then $D(P)$ is symmetric.
\end{lemm}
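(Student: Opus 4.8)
The plan is to realize the symmetry group $G$ of $P$ inside the symmetry group of $D(P)$ and then adjoin one extra involution that swaps the two cone points; the two resulting fixed loci will meet only at the origin. Crucially, to prove that $D(P)$ is symmetric I do \emph{not} need to determine the full symmetry group $G' < \SL(n+1,\ZZ)$ of $D(P)$ — it suffices to produce enough symmetries so that their common fixed locus is already $\{0\}$, since the fixed locus of any subgroup of a group of linear maps contains the fixed locus of the whole group.

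First I would set up coordinates on $\RR^{n+1}=\RR^n\times\RR$. Because $P$ is symmetric we have $0\in P^{0}$, so $D(P)$ is the bipyramid with apexes $\pm e_{n+1}=(0,\dots,0,\pm1)$ and waist $P\times\{0\}$, and its generators are these apexes together with the points $(v,0)$, $v\in P$. For each $g\in G$ set $\tilde g:=\diag(g,1)$. Then $\tilde g\in \SL(n+1,\ZZ)$ (its determinant is $\det g=1$), and it preserves $D(P)$ since it fixes both apexes and sends each generator $(v,0)$ to $(gv,0)$ with $gv\in P$. The common fixed locus of $\{\tilde g:g\in G\}$ is $\{(x,t):gx=x\text{ for all }g\in G\}$, which by the symmetry of $P$ (the only common fixed point of $G$ on $\RR^n$ is $0$) equals the last coordinate axis $L:=\{(0,t):t\in\RR\}$.

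To cut $L$ down to the origin I would exhibit a single further symmetry $\rho\in G'$ that interchanges the two apexes. Any such $\rho$ necessarily fixes the hyperplane $H:=\{x_{n+1}=0\}$ and meets $L$ only at $0$, so the common fixed locus of $\{\tilde g\}\cup\{\rho\}$ is contained in $L\cap H=\{0\}$; since $\{0\}$ is visibly fixed, this forces the common fixed locus to be exactly $\{0\}$, i.e. $D(P)$ is symmetric. The obvious candidate is the reflection $\diag(I_n,-1)$, which manifestly preserves $D(P)$ (each horizontal slice at height $q$ is $(1-|q|)P$, symmetric under $q\mapsto-q$) and swaps $\pm e_{n+1}$.

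The main obstacle is that $\diag(I_n,-1)$ has determinant $-1$ and so does not lie in $\SL(n+1,\ZZ)$. To remain in $\SL$ one must instead take $\rho=\diag(A,-1)$ with $A\in\GL(n,\ZZ)$, $A(P)=P$ and $\det A=-1$, so that $\det\rho=(-1)\det A=1$; this again swaps the apexes, fixes $H$, and does the job. Thus the entire argument reduces to the one genuinely delicate point: the existence of an \emph{orientation-reversing} lattice automorphism of $P$. (Indeed, a vertex–permutation analysis shows every apex-swapping element of $G'$ must have this block form, so some such $A$ is exactly what is needed.) This is where the hypothesis on $P$, and the precise convention for the symmetry group $G$, must be invoked; the remainder is the routine bookkeeping that $L$ and $H$ intersect only in $0$.
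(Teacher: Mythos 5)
Your core construction is the paper's proof: the paper lets $G\times \ZZ/2\ZZ$ act on $D(P)$ by $(g,\pm 1)\cdot(p,q)=(g\cdot p,\pm q)$, which in matrix form is exactly your $\tilde g=\diag(g,1)$ together with your ``obvious candidate'' $\rho=\diag(I_n,-1)$, and the fixed-locus bookkeeping ($L\cap H=\{0\}$) is the same. Where you diverge is in rejecting $\diag(I_n,-1)$ because $\det=-1$, and replacing it by the requirement that there exist $A\in\GL(n,\ZZ)$ with $A(P)=P$ and $\det A=-1$. That requirement is where your proposal has a genuine gap: you defer its proof to ``the hypothesis on $P$,'' but the hypothesis cannot deliver it. A symmetric polytope in the paper's sense need not admit any orientation-reversing lattice automorphism. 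Concretely, take $P=\mathrm{Conv}\{(3,1),(-1,2),(-2,-3)\}$, the orbit of $(3,1)$ under the order-three rotation $R=\left(\begin{smallmatrix}0&-1\\ 1&-1\end{smallmatrix}\right)\in\SL(2,\ZZ)$; this $P$ is symmetric (the only fixed point of $\langle R\rangle$ is $0$). Any orientation-reversing automorphism of $P$ would have to act as a transposition of the three vertices, and each transposition is realized by a unique linear map, e.g.\ fixing $(3,1)$ forces $\tfrac{1}{7}\left(\begin{smallmatrix}8&-3\\ 5&-8\end{smallmatrix}\right)$, which is not integral; the other two transpositions are $R$-conjugates of this one, hence also non-integral. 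So the full lattice automorphism group of this $P$ is the chiral group $C_3\subset\SL(2,\ZZ)$, and no such $A$ exists. Worse, by your own block-form analysis every apex-swapping element of $\SL(n+1,\ZZ)$ preserving $D(P)$ must be $\diag(A,-1)$ with $\det A=-1$, so for this $P$ the $\SL(3,\ZZ)$-stabilizer of $D(P)$ is just $\{\diag(g,1):g\in C_3\}$, whose fixed locus is the whole segment $L\cap D(P)$: under the strict $\SL$ reading you adopted, the lemma itself is false, so no completion of your argument is possible.

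What you have actually uncovered is an inconsistency in the paper's conventions rather than a point where a hypothesis ``must be invoked.'' The paper's definitions say $G<\SL(n,\ZZ)$, yet its own one-line proof of this lemma uses the map $(p,q)\mapsto(g\cdot p,-q)$ of determinant $-1$; the notion of symmetric in this context (going back to Batyrev--Selivanova) concerns the full group of lattice automorphisms in $\GL(n,\ZZ)$, where $\diag(I_n,-1)$ is admissible. Under that $\GL$ convention your proof is complete and coincides with the paper's; under the literal $\SL$ convention there is no proof to be had. The fix is therefore to repair the convention (read $G$ as the largest finite subgroup of $\GL(n,\ZZ)$ preserving $P$, both for $P$ and for $D(P)$), not to hunt for the orientation-reversing $A$, whose existence is precisely what can fail.
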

\begin{proof}
If $G$ is the largest group acting on $P$, then $G\times \ZZ/ 2\ZZ$ acting on $D(P)$ by 
\[(g, \pm 1)\cdot (p,q)=(g\cdot p, \pm q). \] hence if $P$ is symmetric, then $D(P)$ is symmetric.
\end{proof}

To given a counter example, first we have the following well known fact.
\begin{lemm}[See \cite{Ehr77}, or  \cite{BDDPS05}]\label{number of integral points}
Let $P$ be a convex integral polytope with dim $\geq 2$. then the number of point 
\[\chi(kP):=| kP\cap \ZZ^n|=Vol(P)k^n+\dfrac{1}{2}Vol( \dd P)k^{n-1}+p(k),\] where $p(k)$ is a polynomial in $k$ of degree $n-2$ which depends on $P$ only.
And for $n=1$, 
\[\chi(kP)=Vol(P)k+1;\]
for $n=2$, we have the Pick theorem (see \cite{Pic1899}):
\[\chi(kP):=| kP\cap \ZZ^n|=Vol(P)k^2+\dfrac{1}{2}Vol( \dd P)k+1.\]

In particular, for $k$ large,  \[\chi(kP)-Vol(kP)=\dfrac{Vol(\dd P)}{2}k^{n-1}+p(k)> 0.\]
\end{lemm}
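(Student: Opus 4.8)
The plan is to prove this in two stages: first establish that $\chi(kP)$ is a polynomial in $k$ of degree $n$ with leading coefficient $Vol(P)$ (Ehrhart's theorem), and then pin down the second coefficient via Ehrhart--Macdonald reciprocity, which is the real content of the statement. Throughout, $Vol(\dd P)$ is understood as the lattice-normalized surface area, i.e.\ the sum over facets $F$ of the lattice-normalized $(n-1)$-volume $Vol_{n-1}(F)$ computed in the affine lattice of each supporting hyperplane.

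For polynomiality I would first reduce to the case of a lattice simplex by choosing a triangulation of $P$ into lattice simplices and applying inclusion--exclusion over the faces of the triangulation; since each intersection is again a lattice polytope of lower dimension, an induction on dimension reduces everything to top-dimensional simplices. For a lattice simplex $\Delta=conv\{v_0,\dots,v_n\}$ I would lift it to height $1$ in $\RR^{n+1}$, form the cone over it, and count lattice points graded by height. The key computation is that the generating function $\sum_{k\geq 0}\chi(k\Delta)\,t^k$ is a rational function whose denominator is $(1-t)^{n+1}$; expanding in partial fractions shows $\chi(k\Delta)$ agrees with a polynomial of degree $n$ for all $k\geq 0$ (a genuine polynomial, with no periodicity, because $\Delta$ is a \emph{lattice} simplex). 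The leading coefficient is identified with $Vol(\Delta)$ by a Riemann-sum argument, since $k^{-n}\chi(k\Delta)$ is a Riemann sum for $\int_\Delta dV$. Summing over the triangulation gives polynomiality for $P$ and leading coefficient $Vol(P)$.

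For the second coefficient, write $\chi(kP)=Vol(P)k^n+c_{n-1}k^{n-1}+\cdots+c_0$ and let $\chi^\circ(kP)$ count the \emph{interior} lattice points of $kP$. Ehrhart--Macdonald reciprocity, proved via the same cone generating function, gives $\chi^\circ(kP)=(-1)^n\chi(-kP)$, where $\chi(-kP)$ denotes the Ehrhart polynomial evaluated at $-k$. Subtracting, the number of boundary lattice points is
\[\chi(kP)-\chi^\circ(kP)=\chi(kP)-(-1)^n\chi(-kP)=2\sum_{n+i\text{ odd}}c_ik^i,\]
so the $k^n$ term cancels and the top term is $2c_{n-1}k^{n-1}$. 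On the other hand, applying the already-established leading-term formula to each facet and using inclusion--exclusion on lower-dimensional faces (contributing $O(k^{n-2})$) shows the boundary count equals $\left(\sum_F Vol_{n-1}(F)\right)k^{n-1}+O(k^{n-2})=Vol(\dd P)k^{n-1}+O(k^{n-2})$. Comparing leading coefficients yields $c_{n-1}=\tfrac12 Vol(\dd P)$, and setting $p(k):=\chi(kP)-Vol(P)k^n-\tfrac12 Vol(\dd P)k^{n-1}$ gives a polynomial of degree $\leq n-2$. The cases $n=1,2$ are handled directly: an integer interval $[a,b]$ gives $\chi(k[a,b])=(b-a)k+1$, and $n=2$ is exactly Pick's theorem.

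Finally, for the ``in particular'' clause, since $P$ is full-dimensional of dimension $\geq 2$ its facets have positive normalized volume, so $Vol(\dd P)>0$; as $\chi(kP)-Vol(kP)=\tfrac12 Vol(\dd P)k^{n-1}+p(k)$ with $\deg p\leq n-2$, the leading term dominates and the expression is positive for all $k$ large. I expect the main obstacle to be the second stage: making the notion of $Vol(\dd P)$ precise as a lattice-normalized surface area and justifying reciprocity rigorously, since identifying the second Ehrhart coefficient with half the surface area (Macdonald's theorem) is exactly the nontrivial point, whereas polynomiality and the leading term are routine.
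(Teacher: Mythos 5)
The paper does not prove this lemma at all --- it is quoted as a known fact with citations to Ehrhart and to Beck--De Loera--Develin--Pfeifle--Stanley, so there is no internal proof to compare against. Your proposal is a correct reconstruction of exactly the argument those references contain: Ehrhart polynomiality via triangulation into lattice simplices and the rational generating function of the cone (denominator $(1-t)^{n+1}$, hence a true polynomial since the simplex is integral), the leading coefficient by Riemann sums, and then the identification $c_{n-1}=\tfrac12 Vol(\dd P)$ by playing Ehrhart--Macdonald reciprocity (boundary count $=2\sum_{n+i\ \mathrm{odd}}c_ik^i$, top term $2c_{n-1}k^{n-1}$) against the facet-by-facet count $\sum_F Vol_{n-1}(F)k^{n-1}+O(k^{n-2})$. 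Two points are worth highlighting as genuinely right and important for this paper: you use the \emph{lattice-normalized} facet volume, which is the normalization the paper silently relies on later (e.g.\ in the identity $Vol(\dd P)=n\,Vol(P)$ for reflexive $P$), and you state the degree bound as $\deg p\leq n-2$, which is the correct reading of the lemma. The only step you invoke rather than prove is reciprocity itself; since that is a named theorem in the cited literature and you flag it as the nontrivial ingredient, this is an acceptable level of rigor --- the proposal stands as a sound proof outline, and it is the same route as the sources the paper cites.
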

\begin{prop}\label{not stable}
Let $P$ be a n dimensional integral polytopes. Suppose $Vol(P)\geq  (n+2)(n+1)$, then $D(P)$ is not asymptotic Chow semistable.
\end{prop}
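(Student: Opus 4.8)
The plan is to contradict asymptotic Chow semistability directly through the criterion \eqref{Chow inequality}: for every large $k$ I will produce a single convex $G$-invariant function on $kD(P)$ that reverses it. The decisive observation is that the obvious candidates all point the \emph{wrong} way — an even convex function of the cone variable, such as $q^2$, or a convex function of $p$ alone, makes the discrete average \emph{exceed} the continuous one, i.e. confirms semistability. The violating function must instead be sharply localized at the two apices $(0,\pm 1)$, where the polytope is thinnest and carries almost no lattice points.

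Concretely I would take, for each $k$,
\[ f_k(p,q):=\max\big(0,\ |q|-(k-1)\big),\qquad (p,q)\in kD(P)\subset\RR^n\times\RR. \]
This is a maximum of affine functions, hence convex, and it depends only on $|q|$. Since $G<SL(n+1,\ZZ)$ preserves both Lebesgue measure and the lattice $\ZZ^{n+1}$, I may replace $f_k$ by its $G$-average without changing either $\int_{kD(P)}f_k\,dV$ or $\sum_v f_k(v)$; thus there is no loss in assuming $f_k$ is $G$-invariant, and the computations below are unaffected.

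Next I would evaluate the two sides of \eqref{Chow inequality} using the slicing $kD(P)=\{(p,q):p\in(k-|q|)P,\ -k\le q\le k\}$. On the continuous side $f_k$ is supported on $k-1\le|q|\le k$, and the substitution $u=k-|q|$ yields
\[ \int_{kD(P)}f_k\,dV=2\,Vol(P)\int_0^1(1-u)u^n\,du=\frac{2\,Vol(P)}{(n+1)(n+2)}, \]
independent of $k$. On the discrete side, among the integer heights $j\in\{-k,\dots,k\}$ only $j=\pm k$ give $f_k(j)\neq 0$, and each corresponding slice is the single apex lattice point, so $\sum_{v\in kD(P)\cap\ZZ^{n+1}}f_k(v)=2$. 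Feeding these, together with $Vol(kD(P))=\frac{2\,Vol(P)}{n+1}k^{n+1}$, into \eqref{Chow inequality} and cancelling $Vol(P)$, the semistability inequality for $f_k$ collapses to the single clean condition $\chi\big(kD(P)\big)\le 2(n+2)\,k^{n+1}$.

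Finally I would apply the Ehrhart expansion of Lemma \ref{number of integral points} to the $(n+1)$-dimensional polytope $D(P)$, namely $\chi(kD(P))=\frac{2\,Vol(P)}{n+1}k^{n+1}+\tfrac12 Vol(\partial D(P))k^{n}+\cdots$, whose correction term is strictly positive for large $k$. If $Vol(P)>(n+1)(n+2)$ the leading coefficient $\frac{2\,Vol(P)}{n+1}$ already exceeds $2(n+2)$, so $\chi(kD(P))>2(n+2)k^{n+1}$ for all large $k$; if $Vol(P)=(n+1)(n+2)$ the leading coefficients coincide and the strictly positive subleading Ehrhart term supplies the same strict inequality. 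In either case $f_k$ violates \eqref{Chow inequality} for all large $k$, so $D(P)$ is not asymptotic Chow semistable. The one genuine obstacle is the first step — realizing that a globally spread convex function cannot detect the instability and that one must localize at the apices — after which the computation is elementary; the marginal case $Vol(P)=(n+1)(n+2)$ is exactly what forces one to exploit the positivity of the Ehrhart boundary term rather than the leading volume alone.
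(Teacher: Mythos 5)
Your proposal is correct and follows essentially the same route as the paper: the identical test function $\max(0,|q|-(k-1))$ localized at the two apices, the same integral value $\tfrac{2\,\mathrm{Vol}(P)}{(n+1)(n+2)}$ and discrete sum $2$, and the same appeal to the Ehrhart expansion (positivity of $\chi(kD(P))-\mathrm{Vol}(kD(P))$) to settle both the strict and the marginal case $\mathrm{Vol}(P)=(n+1)(n+2)$. Your explicit $G$-averaging step is a small refinement the paper leaves implicit, but it does not change the argument.
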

\begin{proof}
For $kD(P)$, denote the point in $kD(P)$ be $(p,q)$, where $p\in \RR^n$, $q\in \RR$. Consider the following function: 
\[f(p,q)=\left\{ \begin{matrix} 0 & \text{ if } & |q|\leq k-1\\
 t & \text { if } & |q|= (1-t)(k-1)+tk =k-1+t,  0\leq t\leq 1
\end{matrix}\right.\]
Then 
\[\sum_{p\in kD(P)}f(p)=2.\]
Let $Vol(P)= (n+2)(n+1)(1+\delta)$ for some $\delta \geq 0$ .
\begin{align*} &\int_{kD(P)} f(x) dV= 2\int_{0}^1 t(1-t)^nVol(P) dt=2Vol(P)\int_{0}^1 t^n(1-t)dt=2Vol(P)\left(\dfrac{1}{n+1}-\dfrac{1}{n+2}\right) \\
=& 2 \dfrac{Vol(P)}{(n+1)(n+2)}=2+2\delta\end{align*} for some fix $\delta>0$. Therefore, 
\[\dfrac{1}{Vol(kD(P))}\int_{kD(P)} f(x) dV=\dfrac{2+2\delta}{Vol(D(P))k^{n+1}}\] and 
\[ \dfrac{1}{\chi(kD(P))}\sum_{p\in kD(P)}f(p)=\dfrac{2}{\chi(kD(P))}.\] 
%Also, 
%\[Vol( D(P))=2\dfrac{Vol(P)}{n+1}=2(n+2)(1+\delta).\]
\begin{align*}
&\dfrac{1}{\chi(kD(P))}\sum_{p\in kD(P)}f(p)- \dfrac{1}{Vol(kD(P))}\int_{kD(P)} f(x) dV\\
 =& \dfrac{2}{\chi(kD(P))}- \dfrac{2+2\delta}{Vol(kD(P))}\\
<&  \dfrac{2}{Vol(kD(P))}- \dfrac{2+2\delta}{Vol(kD(P))}\\
=& \dfrac{-2\delta}{Vol(k D(P))}\\
\leq & 0.
\end{align*}
\end{proof}
\begin{exam}[Claim 4.3 in \cite{LLSW19}]
Let $P=[-a,a]$ for $a> 3$, then $D(P)$ is not Chow stable by previous proposition.
\end{exam}

The question is, suppose $P$ is symmetric and reflexive, is it enough to show that $P$ is asymptotic Chow semistable? The answer is no. 
\begin{exam}\label{symmetric reflexive non stable polytopes}
Consider $P=[-1,1]^6= ((\PP^1)^6, O(2,2,2,2,2,2)) $, then \[Vol(P)=2^6=64>56=8\times 7=(6+2)(6+1). \]
Indeed, as $2^x-(x+2)(x+1)$ is increasing when $x\geq 6$, so for all $n\geq 6$, 
\[2^n-(n-2)(n-1)\geq 64-56=8>0,\]
which implies that $D([-1,1]^n)$ is not  asymptotic Chow semistable for all $n\geq 6$.
\end{exam}

\begin{rema}
This cut a vertex technique  obviously holds for any polytopes. Namely, let $p$ be a $d$ dimensional polytopes, then we cut a cone from the vertex such that there is no interior integral point until length 1, and let the base to be $Q_p$, which is $(d-1)$ dimension.  If $Vol(Q_p)\geq d(d+1)$, then $P$ is not asymptotic Chow stable. \\

Beside, for $(\PP^{n}, O(n+1))$ and $((\PP^1)^{n}, O(2,\cdots, 2))$, if we cut the cone, it must be a $n$ dimensional simplex,  hence $Q_p$ is a $n-1$ dimensional simplex for all $p$, and the volume of $Q_p$ is \[Vol(Q_p)\dfrac{1}{(n)!}<(n+2)(n+1),\] which is expected as we know that they are asymptotic Chow polystable .   
\end{rema}

In the next section, we will define a more restrictive type of polytopes, which is asymptotic Chow polystable.
\section{special polytopes}
We first recall some definition in toric geometry.
\begin{defi}
A integral polytopes $P$ is reflexive if the boundary is given by the equations \[\sum_{i=1}^n a_ix_i=\pm 1,\] where $a_i\in \ZZ$. Or equialvently, there exists exactly one interior point $(0,...,0)$.
\end{defi}
\begin{defi}
A integral polytopes $P$ is symmetric if there is exactly one fix point  of the symmetric group $G$ acting on $P$. 
\end{defi} 
notice that if $P$ is reflexive, then the fix point is $0$, and $G< SL(n, \ZZ)$ acting on $P$ as a multiplication.
%\begin{defi}
%	Let $P$ be an integral convex polytopes on $\RR^n$. We say $P$ is very symmetric if there exists $Q\subset \dd P$ be a $n-1$ dimensional convex set, $G< SL(n,\ZZ)$ such that 
%	\begin{enumerate}
%		\item $\dd P=G\cdot Q$. 
%		\item $g_1\cdot Q\neq g_2\cdot Q$ if $g_1\neq g_2$
%	\end{enumerate}
%	 then 	we call $Q$ to be the fundamental domain of $P$. Also, we denote \[C(Q):=\{tp \in P| 0\leq t\leq 1, p\in Q\}.\]
%\end{defi}
We now add one extra restriction on the symmetric reflexive polytopes.
\begin{defi}\label{regular triangulation}
Let $P$ be an n dimensional integral convex polytope on $\RR^n$. We say $P$ has a regular boundary if for any $k$, there exists a triangulation of $\dd kP$ which every "triangle" is integrally isomorphic of  \[T_{n-1} := conv\{(0,...,0), e_1,...,e_{n-1}\},\]  the standard $(n-1)$ dimensional simplex, (i.e., the interseciton between different $T_{n-1}^i$ are in the boundary) such that for any point $p\in \dd kP$, the number of simplex intersection with $p$, denoted as $n(p)$, satisfies \[n(p)\leq n!,\] and this triangulation is a sub-triangulation of each face. 

Here integrally isomorphic means one of the object is obtained from another object by an integral rigid motion,  i.e., the multiplication of a matrix $A\in SL(n, \ZZ)$ and translation of $v\in \ZZ^{n}$.  
\end{defi}

\begin{rema}
 If two object $P_1, P_2$ are integral isomorphic , then for all $k$, $kP_1$ has same number of integral points as $kP_2$. Indeed, integral isomorphism is obtained by a bijection map $\varphi:\ZZ^n\rightarrow \ZZ^n$,. So for each compact object $U\subset \RR^n$, the map $\varphi: U\cap \ZZ^n\rightarrow \varphi(U\cap \ZZ^n)$ is a bijection. 
\end{rema} 
\begin{defi}\label{special polytope}
An integral convex polytope on $\RR^n$ is called special if it is  reflexive, weakly symmetric and has regular boundary.
\end{defi}

\begin{exam}\label{2 dimension example}
Suppose $P$ is a two dimensional symmetric reflexive polytope, then it is special. 
\end{exam}
\begin{proof}
The boundary of $P$ is a loop, hence every point must connect with 2 segment, hence the boundary has regular triangulation. 
\end{proof}
\begin{rema}\label{2 dimension example remark}
 The two dimensional symmetric reflexive polytopes are
$X_3:=Conv\{(-1,-1), (1,0), (0,1)\}$, $X_4:=Conv\{(\pm 1, 0), (0, \pm 1)\}$, $X_6:=Conv\{(0,\pm 1), (\pm 1, 0), (1,-1), (-1,1)\}$, $X_8:=Conv\{(\pm 1, \pm1)\}$, $X_9:=Conv\{(-1,-1), (2,-1), (-1, 2)\}$. 
\end{rema}
\begin{exam}
$D(X_3)$, $D(X_4)$ and $D(X_6)$ are special. However,
$D(X_8), D(X_9)$ is symmetric and reflexive only, and they are  not special. the reason is, the face of $D(X_8)$ is given by the triangle $Conv\{(-1,0),(0,0),(1,0),(0,1)\}$, which for any $k$, for the point $(0,0,\pm k) $, there must be $2$ simplex attaching the vertex for each face, and there are 4 face, hence \[n(0,0, \pm k)=8.\] 
Similarly, we can see that for any triangulation for $D(X_9)$, 
\[n(0,0, \pm k)=9\]
\end{exam}

\section{Chow stability of special polytopes}
We now consider what the individual assumptions can be provided. we first start with reflexive.
\begin{lemm}\label{all point is on boundary}
Let $P$ is a reflexive polytopes. then for all $k\in \NN$, \[kP\cap \ZZ^n=\bigcup_{i=0}^k (\dd iP \cap \ZZ^n).\]
\end{lemm}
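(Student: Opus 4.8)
The plan is to use the facet description of a reflexive polytope and to reduce the statement to identifying, for each integral point $v$, the smallest dilate of $P$ whose boundary contains $v$. Since $P$ is reflexive, its facets lie on hyperplanes $\langle u_F, x\rangle = 1$ with primitive integral normals $u_F \in \ZZ^n$, and the origin is the unique interior point. Hence
\[
P = \{x \in \RR^n : \langle u_F, x\rangle \le 1 \ \text{ for every facet } F\},
\]
and, because $0 \in P^0$, for every real $i \ge 0$ the dilate is $iP = \{x : \langle u_F, x\rangle \le i \ \forall F\}$, with $iP \subseteq kP$ whenever $0 \le i \le k$. The inclusion $\supseteq$ is then immediate: for $0 \le i \le k$ we have $\partial(iP) \subseteq iP \subseteq kP$, so $\partial(iP) \cap \ZZ^n \subseteq kP \cap \ZZ^n$.

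For the reverse inclusion I would take $v \in kP \cap \ZZ^n$ and introduce the number
\[
i(v) := \max_F \langle u_F, v\rangle,
\]
which is nothing but the Minkowski gauge value of $v$ with respect to $P$. The whole argument hinges on observing that $i(v)$ is an \emph{integer}: each $\langle u_F, v\rangle$ lies in $\ZZ$ because both $u_F$ and $v$ are integral, so a maximum of integers is again an integer.

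It then remains to locate $v$ in the correct dilate. Since $0$ is interior, the normals $u_F$ positively span $\RR^n$, which forces $i(v) \ge 0$ (with $i(v)=0$ exactly when $v=0$); and since $v \in kP$ we have $\langle u_F, v\rangle \le k$ for all $F$, giving $i(v) \le k$. By construction $\langle u_F, v\rangle \le i(v)$ for every facet, with equality for the maximizing facet $F^\ast$, so $v \in i(v)P$ and $v$ lies on the supporting hyperplane $\langle u_{F^\ast}, x\rangle = i(v)$ of $i(v)P$; that is, $v \in \partial\!\left(i(v)P\right)$ with $0 \le i(v) \le k$. This exhibits $v$ as a member of the right-hand union and completes the equality. (The case $v=0$ is the $i=0$ term.)

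The only genuine subtlety, although it is a one-line observation, is recognizing that integrality of the facet normals, which is precisely the reflexivity hypothesis, promotes the gauge value $\max_F \langle u_F, v\rangle$ from an a priori real number to an integer. Without reflexivity a lattice point could sit strictly between two consecutive integer dilates, and the statement would fail; so this is exactly the place where the hypothesis is used, and I expect it to be the step to emphasize rather than the (routine) inclusions.
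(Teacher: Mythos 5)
Your proof is correct and follows essentially the same route as the paper: both arguments use the integral facet normals $\langle u_F, x\rangle \le 1$ coming from reflexivity, observe that the gauge value $\max_F \langle u_F, v\rangle$ of an integral point is an integer, and conclude that $v$ lies on the boundary of that integer dilate. Your write-up is in fact more careful than the paper's (which tacitly treats the maximizing facet and writes $0<c_\alpha<k$ where $c_\alpha \le k$ is needed), but the underlying idea is identical.
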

\begin{proof}
Let $P$ be reflexive. $(0,...,0)\in \dd (0P)$ by defintion.
Notice that for any $p=(p_1,...,p_n)\neq 0 \in kP$, there exists $\alpha$ and $0<c_{\alpha}<k$ such that 
\[a_{1,\alpha}p_1+\cdots +a_{n,\alpha}p_n=c_{\alpha}.\] But $p\in \ZZ^n$ implies $c_{\alpha\in \ZZ}$, hence $p\in \dd c_{\alpha} P\cap \ZZ^n$.  
\end{proof}

Also, we have the following.
\begin{lemm}\label{boundary volume and volume}
Let $P$ be reflexive $n$ dimensional polytopes, then 
\[\dfrac{Vol(\dd P)}{n}=Vol(P)\]
\end{lemm}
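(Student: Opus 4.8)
The plan is to decompose $P$ into pyramids over its facets with common apex at the origin and to compare volumes facet by facet. Since $P$ is reflexive, the origin is the unique interior lattice point, so for each facet $F$ of $P$ — lying on a supporting hyperplane $\langle u_F, x\rangle = 1$ with $u_F \in \mathbb{Z}^n$ primitive — the set $\mathrm{Cone}(F) := \mathrm{Conv}(\{0\}\cup F)$ is a genuine pyramid, and $P = \bigcup_F \mathrm{Cone}(F)$ with the pieces meeting only along lower-dimensional faces. Hence $Vol(P) = \sum_F Vol(\mathrm{Cone}(F))$, and it suffices to prove the per-facet identity $Vol(\mathrm{Cone}(F)) = \frac{1}{n}\,\mathrm{nvol}(F)$, where $\mathrm{nvol}(F)$ denotes the lattice-normalized $(n-1)$-volume of $F$. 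Summing over $F$ then gives $Vol(P) = \frac{1}{n}\sum_F \mathrm{nvol}(F) = \frac{1}{n}Vol(\partial P)$, once one notes that the coefficient $Vol(\partial P)$ appearing in Lemma \ref{number of integral points} is precisely the total lattice surface area $\sum_F \mathrm{nvol}(F)$ (as is visible already from Pick's theorem in the $n=2$ case).

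For the per-facet identity I would compute both sides in the ordinary Euclidean metric and let the normalization factor cancel. The Euclidean volume of the pyramid is $Vol(\mathrm{Cone}(F)) = \frac{1}{n}\,\mathrm{vol}_{n-1}(F)\cdot h_F$, where $\mathrm{vol}_{n-1}(F)$ is the Euclidean $(n-1)$-volume of $F$ and $h_F$ is the Euclidean distance from $0$ to $\mathrm{aff}(F)$. Because $F$ lies on $\langle u_F, x\rangle = 1$, this distance is $h_F = 1/|u_F|$. On the other hand, the rank $(n-1)$ lattice $\mathbb{Z}^n \cap u_F^{\perp}$ has covolume exactly $|u_F|$: since $u_F$ is primitive the slices $\langle u_F,\cdot\rangle = m$ are nonempty for every $m\in\mathbb{Z}$ and sit at Euclidean spacing $1/|u_F|$, so $\mathrm{covol}(\mathbb{Z}^n)=1$ forces the slice lattice to have covolume $|u_F|$. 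Consequently $\mathrm{nvol}(F) = \mathrm{vol}_{n-1}(F)/|u_F|$, whence $Vol(\mathrm{Cone}(F)) = \frac{1}{n}\,\mathrm{vol}_{n-1}(F)/|u_F| = \frac{1}{n}\,\mathrm{nvol}(F)$, as required.

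The step I expect to be the main obstacle is the bookkeeping of normalizations, namely making rigorous that the $Vol(\partial P)$ of the Ehrhart expansion coincides with $\sum_F\mathrm{nvol}(F)$ and that the facet sublattice has covolume $|u_F|$. This is exactly where reflexivity is genuinely used: it guarantees that every facet sits at lattice distance exactly $1$ from the origin, i.e. $h_F|u_F| = 1$, which is what makes the $|u_F|$ factors cancel uniformly across all facets. For a non-reflexive polytope the heights $h_F$ would instead involve the lattice distances to the facets and the clean factor $\tfrac1n$ would be lost.

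Alternatively, a purely lattice-point argument is available and may fit the paper's framework more smoothly, since it invokes the immediately preceding lemma. By Lemma \ref{all point is on boundary} the sets $\partial(iP)\cap\mathbb{Z}^n$ partition $kP\cap\mathbb{Z}^n$ (a nonzero $p$ lies on $\partial(iP)$ for the unique integer $i=\max_\alpha\langle u_\alpha,p\rangle$), so $\chi(kP) = 1 + \sum_{i=1}^k b(i)$ with $b(i) := |\partial(iP)\cap\mathbb{Z}^n|$. Each facet contributes $\mathrm{nvol}(F)\,i^{n-1}+O(i^{n-2})$ points to $b(i)$ and the overlaps on lower faces are $O(i^{n-2})$, so $b(i) = Vol(\partial P)\,i^{n-1}+O(i^{n-2})$. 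Summing and using $\sum_{i=1}^k i^{n-1} = \frac{1}{n}k^n + O(k^{n-1})$ gives $\chi(kP) = \frac{Vol(\partial P)}{n}k^n + O(k^{n-1})$; comparing the leading $k^n$-coefficient with $\chi(kP) = Vol(P)k^n+\cdots$ from Lemma \ref{number of integral points} yields $Vol(P) = Vol(\partial P)/n$.
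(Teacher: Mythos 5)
Your primary argument is essentially the paper's own proof: the paper decomposes $P$ into the cones $C(Q_i)=Conv\{(0,\dots,0),Q_i\}$ over the facets, observes that reflexivity forces each such pyramid to have height $1$, and sums the formula $Vol(C(Q_i))=Vol(Q_i)/n$. What you add is the normalization bookkeeping that the paper leaves entirely implicit: the paper never specifies in which measure the facet volumes are taken, whereas you check that the Euclidean height $1/|u_F|$ cancels against the covolume $|u_F|$ of the facet sublattice $\ZZ^n\cap u_F^{\perp}$, so the identity holds for the lattice-normalized boundary volume — which is the one occurring in the Ehrhart coefficient of Lemma \ref{number of integral points}, and hence the one actually used later (e.g.\ in the proof of Theorem \ref{special polytope is stable}). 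Your third paragraph is a genuinely different route: rather than comparing volumes directly, it counts lattice points shell by shell via Lemma \ref{all point is on boundary}, sums $b(i)=Vol(\dd P)\,i^{n-1}+O(i^{n-2})$ over $1\le i\le k$, and matches leading Ehrhart coefficients. That version stays entirely inside the discrete framework of the paper and needs no Euclidean-versus-normalized conversion at all, at the cost of invoking Ehrhart asymptotics facet by facet; the pyramid argument is shorter but is precisely where the normalization subtlety you isolate must be addressed. Both arguments are correct.
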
 
\begin{proof}
Let $\displaystyle \cup_{i=1}^r=Q_i=\dd P$, where $Q_i$ are faces of $P$. Then define 
\[C(Q_i):=Conv\{(0,...,0), Q_i\}=\{tx\in P| x\in Q_i, 0\leq t\leq 1\}.\] Then
\[P= \bigcup_{i=1}^r C(Q_i),\] and 
\[Vol(P)=\sum_{i=1}^r Vol(C(Q_i)).\] The assumption that $P$ is reflexive implies the height is 1 for any $C(Q_i)$, so 
\[Vol(P)=\sum_{i=1}^r Vol(C(Q_i))=\sum_{i=1}^r \dfrac{Vol(Q_i)}{n}=\dfrac{Vol(\dd P)}{n}.\]
\end{proof}

%We now continue with symmetric.
%\begin{lemm}
%Let $f$ be a $H$ invariant convex function on a symmetric polytopes $P$. Then 
%the fix point $0$ is the minimum of the function. 
%\end{lemm}
%\begin{proof}
%$f$ is $G$ invariant means 
%\[f(x)=\dfrac{1}{|G|}\sum_{g\in G}f(gx).\] Let $K_x<H$ be the fix group of a point $x$, then $R_x:=G/K_x$ be the group that will move $x$, and we also have 
%\[\sum_{r\in R_x}f(rx)=\dfrac{1}{|R_x|}f(x).\] Also, 
%\[\sum_{r\in R_x}rx=\dfrac{1}{|K_x|}\sum_{g\in H}gx =0\] as $0=(0,...,0)$ is the only fix point. therefore, by convexity, 
%\[f(0)=f\left(\dfrac{\sum_{r\in R_x}rx}{|R_x|}\right)\leq \dfrac{1}{|R|}\sum_{r\in R_x}f(rx)=f(x).\]  
%\end{proof}
%Let $x\in \dd P$, then $tx\in t\dd P$. with this notation, we have:

\begin{lemm}\label{convexity on t direction}
	Suppose  $f:P\rightarrow \RR$ be a $G$ invariant convex function such that \[\min_{p\in P}f(x)=f(0)\geq 0.\]  Then 
	\[F_f(t):=\int_{t\dd P}f(tx)d\sigma_P\] is convex, where $\sigma_{\dd P}|_x =d(l_{Q_i})|_x$ for $x\in Q_i$, the defining boundary function of the face $Q_i\subset \dd P$.  

%As a consequence, for any $G$ invariant convex function $f$ on a symmetric integral polytopes $P$, 
\end{lemm}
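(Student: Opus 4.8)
The plan is to reduce the assertion to the elementary principle that a superposition of convex functions against a \emph{positive}, $t$--independent measure is again convex, supplemented by the monotonicity that the hypothesis $\min_P f=f(0)\ge 0$ forces along rays through the origin. First I would make the radial description of a reflexive polytope explicit: writing a point of $P\setminus\{0\}$ as $x=sy$ with $y\in\partial P$ and $s\in[0,1]$, the cone volume element decomposes as $dV=s^{n-1}\,ds\,d\sigma_P(y)$, where $\sigma_P$ is the facet measure $d(l_{Q_i})$ of the statement. Integrating the constant $1$ returns $\mathrm{Vol}(P)=\tfrac1n\,\mathrm{Vol}(\partial P)$, i.e. Lemma~\ref{boundary volume and volume}, which is exactly the consistency check that this measure is correctly normalized. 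Under this convention the shell at radius $t$ is $t\,\partial P=\partial(tP)$, its facet measure scales as $d\sigma_{tP}=t^{n-1}\,d\sigma_P$, and therefore
\[
F_f(t)=\int_{t\partial P} f\,d\sigma_{tP}=t^{n-1}\,\phi(t),\qquad \phi(t):=\int_{\partial P} f(tx)\,d\sigma_P(x).
\]

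Next I would record the two structural properties of the shell integral $\phi$. For each \emph{fixed} $x\in\partial P$, the map $t\mapsto f(tx)$ is the restriction of the convex function $f$ to the ray through $0$ and $x$, hence convex in $t$; and since $f(0)=\min_P f$, this one--variable convex function attains its minimum at the left endpoint $t=0$, so it is nonnegative and nondecreasing on $[0,1]$. Integrating the two--point inequality $f\big((\lambda t_1+(1-\lambda)t_2)x\big)\le \lambda f(t_1x)+(1-\lambda)f(t_2x)$ against the positive measure $\sigma_P$ (the same mechanism as in Lemma~\ref{product convex function}) shows that $\phi$ is itself nonnegative, nondecreasing, and convex. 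No differentiability of $f$ is needed, since everything passes through the midpoint/two--point inequality rather than second derivatives.

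Finally I would multiply the two factors. The function $\psi(t)=t^{n-1}$ is nonnegative, nondecreasing, and convex on $[0,\infty)$, and by the previous step so is $\phi$; for any two such functions the product is convex, as one sees from the identity $(\psi\phi)''=\psi''\phi+2\psi'\phi'+\psi\phi''$ (every summand nonnegative), or, avoiding smoothness, from the inequality $\psi(\lambda a+(1-\lambda)b)\phi(\lambda a+(1-\lambda)b)\le \lambda\,\psi(a)\phi(a)+(1-\lambda)\,\psi(b)\phi(b)$, whose defect equals $\lambda(1-\lambda)\big(\psi(b)-\psi(a)\big)\big(\phi(b)-\phi(a)\big)\ge 0$ by monotonicity. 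Hence $F_f=\psi\phi$ is convex, as claimed.

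\textbf{The main point to watch} is precisely this last combination. The raw shell average $\phi$ is convex for \emph{any} convex $f$, but the geometric Jacobian $t^{n-1}$ coming from dilating the boundary can destroy convexity of the product unless $\phi$ is also nonnegative and monotone increasing. This is exactly what the normalization $\min_P f=f(0)\ge 0$ supplies, so the hypothesis on the minimum is genuinely used, whereas the $G$--invariance of $f$ plays no role in the convexity itself and is recorded only because $F_f$ will be fed into the stability estimates of the subsequent lemmas.
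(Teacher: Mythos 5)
Your proof is correct, and it rests on the same decomposition as the paper's: writing $F_f(t)=t^{n-1}\phi(t)$ with $\phi(t)=\int_{\partial P}f(tx)\,d\sigma_P(x)$, and exploiting that both factors are nonnegative, nondecreasing and convex, with nonnegativity and monotonicity along rays coming precisely from $\min_P f=f(0)\ge 0$. Where you differ is in execution. The paper first replaces $f$ by a decreasing sequence of smooth $G$-invariant convex approximations $f_i$, then verifies $F_i''\ge 0$ via the product-rule expansion $(n-1)(n-2)t^{n-3}\int g_i+2(n-1)t^{n-2}\int \tfrac{dg_i}{dt}+t^{n-1}\int\tfrac{d^2g_i}{dt^2}$, treating $n\ge 3$, $n=2$ and $n=1$ as separate cases, and finally passes to the limit. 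Your route establishes the three properties of $\phi$ directly from the two-point convexity inequality and then invokes the general fact that a product of two nonnegative, nondecreasing, convex functions is convex, with the defect identity $\lambda(1-\lambda)\bigl(\psi(b)-\psi(a)\bigr)\bigl(\phi(b)-\phi(a)\bigr)\ge 0$ doing the work of the second-derivative computation. This buys two things: you never need the smoothing step (which the paper asserts without construction, and which requires some care to preserve convexity and $G$-invariance simultaneously), and your argument is uniform in the dimension $n$ rather than split into cases. Your closing observations are also accurate: $G$-invariance plays no role in this lemma, and the hypothesis on the minimum is exactly what prevents the Jacobian factor $t^{n-1}$ from destroying convexity --- the paper makes the same point in its remark after the lemma by taking $f\equiv c<0$ with $n\ge 3$.
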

\begin{proof}
First, we have a map $\varphi: \dd P \times [0,1]\rightarrow P$ defined by 
\[\varphi(x,t)=tx.\] Notice that this map is surjective, $\varphi(x,0)=0$ and $\varphi|_{\dd P\times (0,1]}$ is bijective. Hence any function $f$ on $P$ can be represented by the function 
\[g(x,t):=f \circ \varphi(tx)\]
	Notice that $f(0)$ is the minimum, so $f(x)\geq 0$. We find a (decreasing) sequence of smooth $G$ invariant convex function $f_i$ with $f_i(0)\geq 0$, converges to $f$. 
Denote $Q= \dd P$.
	We define $g_i:Q\times [0,1]\rightarrow \RR$ by \[g_i(x,t):=f_i\circ \varphi(x,t)).\] Now, by convexity, and $f(0)$ is minimum, $f$ is increasing along the segment $\{(tx,t)|0\leq t\leq 1\}$, so it  implies 
	\[\dfrac{dg_i}{dt}(x,t)\geq 0.\]
	Also, convexity of $f_i$ implies
	\[\dfrac{d^2g_i}{dt^2}(x,t)\geq0\]
	
	 As \[\int_{tQ}f_i(tx)d\sigma_Q=t^{n-1}\int_Q g_i\left(x,t\right)d\sigma_Q,\] We now compute the second derivative of $F_i$. for $n\geq 3$, the second derivative of $F_i$ is given by:
	\begin{align*}
	&\dfrac{d^2}{dt^2}\int_{tQ}f_i(x,t)d\sigma_Q\\
	=&\dfrac{d^2}{dt^2}t^{n-1}\int_{Q}g_i(x,t)d\sigma_Q\\
	=&\dfrac{d}{dt}\left((n-1)t^{n-2}\int_Qg_i\left(x,t\right)d\sigma_Q+t^{n-1}\int_Q\dfrac{dg_i}{dt}\left(x,t\right)d\sigma_Q\right)\\
	=&(n-1)(n-2)t^{n-3}\int_Qg_i\left(x,t\right)d\sigma_Q\\
	&+2(n-1)t^{n-2}\int_Q\dfrac{dg_i}{dt}\left(x,t\right)d\sigma_Q+t^{n-1}\int_Q \dfrac{d^2g_i}{dt^2}d\sigma_Q\\
	\geq & 0,
	\end{align*}
	so all $F_i$ are convex. Thus $F$ is convex.
	
Also, for $n=2$, 
\[F_i''(t)=2(n-1)\int_Q\dfrac{dg_i}{dt}\left(x,t\right)d\sigma_Q+t\int_Q \dfrac{d^2g_i}{dt^2}d\sigma_Q,\] Finally, for $n=1$, $F(t)=f(-ta)+f(tb)$ for $P=[-a,b]$, so 
\[F_i''(t)=a^2 f''(-ta)+b^2 f''(tb)\geq 0.\]

So $F_i''(t) \geq 0$ for all $i$ which implies $F(t)$ is convex. 
\end{proof}

As a remark, when we put $f(x)=c$, then $F_c(t)= cVol(\dd P)t^{n-1}$, in which we can see if $c<0$ and $n \geq 3$, $F_c$ is not convex on $[0,1]$.
%
%\begin{rema}
%\begin{enumerate}
%	\item 	If $P$ is in dimension 2, then 
%	\[F''(t)=2(n-1)\int_Q\dfrac{dg_i}{dt}\left(x,t\right)d\sigma_Q+t\int_Q \dfrac{d^2g_i}{dt^2}d\sigma_Q,\] which mean $f\leq 0$ is not necessary.
%	\item For $n\geq 3$, even $f$ is constant, by the inequality, $F''(x)$ may not be $0$. 
%\end{enumerate}
%\end{rema}

\begin{coro}\label{integral estimation by boundary}
Suppose $P$ is symmetric, then for all $k\in \RR$, for all $G$ invariant convex function $f:kP\rightarrow \RR$ with $\displaystyle \min_{x\in kP}f(x)=f(0)=0$,  we have 
\[\int_{kP} f(x,t) dV\leq \dfrac{1}{2}F(0)+F(1)+...+F(k-1)+\dfrac{1}{2}F(k),\]
where \[F(t):=\int_{t \dd P}f(x,t)d\sigma_{\dd P}.\] Also, equality hold if and only if $f=0$. 
%{\red We need $f=0$ to get equality for higher dimension}.
\end{coro}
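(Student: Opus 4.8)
The plan is to reduce both sides to the single radial variable $t$ and then to recognise the right-hand side as the trapezoidal approximation of $\int_0^k F(t)\,dt$, which overestimates the integral precisely because $F$ is convex. First I would record the radial decomposition of $kP$: since $P$ is reflexive, each face $Q_i$ lies on a hyperplane $l_{Q_i}=1$, and the map $\varphi(x,t)=tx$ sweeps $kP$ as $(x,t)$ ranges over $\partial P\times[0,k]$; with the boundary measure $d\sigma_{\partial P}|_x=d(l_{Q_i})|_x$ the volume form is $dV=t^{n-1}\,dt\,d\sigma_{\partial P}$, so that
\[
\int_{kP} f\,dV=\int_0^k\Big(\int_{t\partial P} f\,d\sigma_{\partial P}\Big)dt=\int_0^k F(t)\,dt .
\]
The normalisation is the correct one: taking $f\equiv 1$ gives $\int_0^k t^{n-1}Vol(\partial P)\,dt=Vol(\partial P)k^n/n=Vol(kP)$ by Lemma \ref{boundary volume and volume}.

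Next, since $f$ is $G$-invariant, convex and satisfies $\min_{kP}f=f(0)=0$ (hence $f\ge 0$ and $F\ge 0$), the computation in Lemma \ref{convexity on t direction}, carried out on $kP$ rather than $P$, shows that $F$ is convex on all of $[0,k]$. Now I would observe that
\[
\tfrac12 F(0)+F(1)+\cdots+F(k-1)+\tfrac12 F(k)=\sum_{j=0}^{k-1}\frac{F(j)+F(j+1)}{2}
\]
is exactly the trapezoidal rule on $[0,k]$ with unit nodes. For a convex $F$ the chord over $[j,j+1]$ lies above the graph, i.e. $\int_j^{j+1}F(t)\,dt\le\tfrac12\big(F(j)+F(j+1)\big)$; summing over $j=0,\dots,k-1$ yields the asserted inequality.

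For the equality statement the ``if'' direction is trivial. Conversely, equality in the summed estimate forces equality on each $[j,j+1]$, so $F$ is affine there and $F''\equiv 0$ on $(0,k)$. Writing $F(t)=t^{n-1}G(t)$ with $G(t)=\int_{\partial P}f(tx)\,d\sigma_{\partial P}\ge 0$, the expression for $F''$ from Lemma \ref{convexity on t direction} is a sum of nonnegative terms, each of which must vanish. For $n\ge 3$ the term $(n-1)(n-2)t^{n-3}G(t)$ forces $G\equiv 0$; for $n=2$ the identity $F''=2G'+tG''$ forces $G'\equiv 0$, so $G$ is constant, and continuity with $G(0^+)=\int_{\partial P}f(0)\,d\sigma_{\partial P}=0$ gives $G\equiv 0$. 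Since $f\ge 0$, $G\equiv 0$ forces $f\equiv 0$ on $kP$ (here one needs $n\ge 2$; the one-dimensional case is degenerate, as e.g.\ $f(x)=|x|$ already attains equality).

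The inequality itself is elementary once one sees the trapezoidal structure, so the points requiring genuine care are the two endpoints of the argument: getting the measure normalisation in the radial decomposition exactly right—this is what ties the definition of $F$ to $\int_{kP}f\,dV$, and is where reflexivity enters through Lemma \ref{boundary volume and volume}—and the equality analysis in low dimension, where for $n=2$ the dominant $t^{n-3}G$ term is absent and one must instead combine $G'\equiv 0$ with the vanishing $G(0^+)=0$. I expect this last step to be the main obstacle.
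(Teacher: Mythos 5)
Your proof is correct and follows essentially the same route as the paper: write $\int_{kP}f\,dV=\int_0^k F(t)\,dt$ via the radial decomposition, invoke Lemma \ref{convexity on t direction} (applied to $kP$) to get convexity of $F$ on $[0,k]$, and conclude by the trapezoid rule. You in fact go further than the paper on two points: the paper's proof writes the radial identity with the wrong scaling (it integrates $\int_0^1 F_{f,kP}(t)\,dt$, which your $f\equiv 1$ normalisation check corrects), and it gives no argument at all for the equality clause, whereas you derive $f\equiv 0$ from the piecewise-affineness of $F$ together with the nonnegative-term decomposition of $F''$. Your remark that the equality assertion fails when $n=1$ (e.g.\ $f(x)=|x|$ on $[-k,k]$ attains equality) is a genuine correction to the statement as written, consistent with the fact that the paper's equality claim is only ever used for polytopes of dimension at least $2$.
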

\begin{proof}
Now \[\int_{kP} f(tx)dV=\int_0^1\int_{t\dd kP}f(tx)d\sigma dt=\int_0^1 F_{f, kP}(t)dt.\]
By Lemma \ref{convexity on t direction}, $F(t)$ is convex, hence by trapezoid rule, we have 
\[\int_{kP} f(x,t)\leq \dfrac{1}{2}F(0)+F(1)+...+F(k-1)+\dfrac{1}{2}F(k).\]
\end{proof}

The final lemma is a property of regular boundary:
\begin{lemm}\label{boundary estimate by discrete sum}
Let $P$ have a regular boundary, then for any $k$, and for any convex function $f$, we have
\[\int_{\dd kP}f(x)d\sigma \leq \sum_{v\in \dd kP}f(v).\]
\end{lemm}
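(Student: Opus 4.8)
The plan is to cut the boundary integral along the regular triangulation, bound the integral over each simplex by the values of $f$ at its vertices using convexity, and then reassemble the vertex contributions, using the multiplicity bound $n(p)\le n!$ together with $f\ge 0$.

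First I would isolate the basic simplex estimate. If $g$ is convex on the standard simplex $T_{n-1}=\conv\{0,e_1,\dots,e_{n-1}\}$ with vertices $v_0,\dots,v_{n-1}$, then writing $x\in T_{n-1}$ in barycentric coordinates $x=\sum_i \lambda_i(x)\,v_i$ and applying convexity pointwise gives $g(x)\le \sum_i \lambda_i(x)\,g(v_i)$. Integrating and using that $\int_{T_{n-1}}\lambda_i\,dV=\vol(T_{n-1})/n=1/n!$ for each $i$ (by symmetry of the barycentric coordinates and $\sum_i\lambda_i\equiv 1$) yields
\[\int_{T_{n-1}} g\,dV\le \frac{1}{n!}\sum_{i=0}^{n-1} g(v_i).\]
Next, for each simplex $T^j$ of the triangulation of $\dd kP$ there is an integral rigid motion $\varphi_j$ (a matrix in $\SL(n,\ZZ)$ composed with a lattice translation) carrying $T^j$ onto $T_{n-1}$. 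Since $\varphi_j$ preserves $\ZZ^n$, it preserves the boundary lattice measure $d\sigma$, and $f\circ\varphi_j^{-1}$ is again convex; applying the simplex estimate to $g=f\circ\varphi_j^{-1}$ transfers back to
\[\int_{T^j} f\,d\sigma\le \frac{1}{n!}\sum_{w\in V(T^j)} f(w),\]
where $V(T^j)$ is the vertex set of $T^j$.

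Then I would sum over $j$. Because the only lattice points of a standard simplex are its vertices, a boundary lattice point $p$ lies in $T^j$ precisely when it is a vertex of $T^j$, so the double sum over vertices counts each $p\in\dd kP\cap\ZZ^n$ exactly $n(p)$ times. Hence
\[\int_{\dd kP} f\,d\sigma=\sum_j\int_{T^j} f\,d\sigma\le \frac{1}{n!}\sum_{p\in \dd kP\cap\ZZ^n} n(p)\,f(p).\]
Finally, since $f\ge 0$ (the standing case here, where $f$ attains its minimum value $0$ at the origin) and $n(p)\le n!$ by the regular-boundary hypothesis, each term obeys $\tfrac{1}{n!}n(p)f(p)\le f(p)$, which gives the asserted inequality.

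The main obstacle I anticipate is bookkeeping rather than a conceptual difficulty: one must verify that $d\sigma$ on each facet is exactly the normalized lattice surface measure for which $\varphi_j$ is measure preserving and $\vol(T_{n-1})=1/n!$, and one must justify the multiplicity count (that ``number of simplices meeting $p$'' equals ``number of simplices having $p$ as a vertex,'' which relies on a standard simplex containing no lattice points beyond its vertices). I would also flag that the nonnegativity $f\ge 0$ is genuinely needed: when $n(p)<n!$ the step $\tfrac{1}{n!}n(p)f(p)\le f(p)$ can fail for negative $f(p)$, so the statement should be read under the normalization $\min f=f(0)=0$ used throughout this section, and it becomes sign-free only in the extremal case $n(p)=n!$ (e.g. the two-dimensional situation).
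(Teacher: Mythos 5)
Your proof is correct and follows essentially the same route as the paper's: triangulate $\dd kP$ into copies of $T_{n-1}$, bound each simplex integral by $\frac{1}{n!}\sum_i f(p_i)$ via convexity, sum with the multiplicity count $n(p)$, and finish with $n(p)\le n!$. Your closing observation is also well taken: the final step $\frac{n(p)}{n!}f(p)\le f(p)$ genuinely requires $f\ge 0$, a hypothesis missing from the statement as written but satisfied in all of the paper's applications, where $f$ is normalized so that $\min f = f(0) = 0$.
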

\begin{proof}
Let $n$ be the dimension of $P$, then its boundary can be triangulated  by the (n-1) simplex $T_{n-1}$. Let the vertex point of $T_{n-1}$ $p:=(p_0,...,p_{n-1})$ and $m_p$, then convexity implies 
\[\int_{T_{n-1}}f(x)d\sigma\leq Vol(T_{n-1})\sum_{i=0}^{n-1}\dfrac{f(p_i)}{n}=\sum_{i=0}^{n-1}\dfrac{f(p_i)}{n!}. \]
Therefore, if we denote $n(p)$ to be the number of simplex touch the point $p$, then the regular boundary assumption means $n(p) \leq n!$, and hence
\begin{align*}\int_{\dd kP}f(x)d\sigma=&\sum_{\alpha} \int_{T_{n-1}^{\alpha}}f(x)d\sigma
\leq \sum_{\alpha}\sum_{i=0}^{n-1}\dfrac {f(p_i^{\alpha})}{n!}
\displaystyle=\sum_{p\in \dd kP \cap \ZZ^n}\dfrac{ n(p)f(p)}{n!}\\
&\leq\sum_{p\in \dd kP \cap \ZZ^n} \dfrac{ n!f(p)}{n!}
=\sum_{p\in \dd kP \cap \ZZ^n} f(p). 
\end{align*}
 \end{proof}
\section{Chow stabilities of special polytopes}
%We first start with a fact.
%\begin{lemm}[See \cite{Ehr77}, or  \cite{BDDPS05}]
%Let $P$ be a convex integral polytope with dim $\geq 2$. then the number of point 
%\[\chi(kP):=| kP\cap \ZZ^n|=Vol(P)k^n+\dfrac{1}{2}Vol( \dd P)k^{n-1}+O(k^{n-2}).\]
%And for $n=1$, 
%\[\chi(kP)=Vol(P)k+1;\]
%for $n=2$,
%\[\chi(kP):=| kP\cap \ZZ^n|=Vol(P)k^2+\dfrac{1}{2}Vol( \dd P)k+1.\]
%\end{lemm}

We now show that a special polytope is asymptotic Chow polystable.
\begin{theo}\label{special polytope is stable}
Let $P$ be a special polytope, then $P$ is asymptotic Chow polystable.
\end{theo}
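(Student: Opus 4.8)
The plan is to verify the Chow inequality \eqref{Chow inequality} on every dilate $kP$ (for $k\gg 0$), with strict inequality unless the test function is affine. Since a special polytope is in particular weakly symmetric, I would first invoke the reduction lemma stated just before the product class: it suffices to prove \eqref{Chow inequality} for convex $f$ on $kP$ normalised so that $\min_{kP} f = f(0) = 0$, because subtracting an affine function alters neither side by weak symmetry. Throughout set $F(s):=\int_{\partial(sP)} f\,d\sigma$, $V_k:=Vol(kP)=Vol(P)k^{n}$, $\chi_k:=\chi(kP)$, and $S_i:=\sum_{v\in \partial(iP)\cap\ZZ^{n}} f(v)$.

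First I would slice $kP$ radially. By Lemma \ref{convexity on t direction} the profile $F$ is convex with $F(0)=0$, so Corollary \ref{integral estimation by boundary} (trapezoidal comparison) gives $\int_{kP} f\,dV \le \sum_{i=1}^{k-1}F(i)+\tfrac12 F(k)$. I would then feed in the two remaining hypotheses of "special": because $P$ has regular boundary, Lemma \ref{boundary estimate by discrete sum} applied to each dilate $iP$ yields $F(i)\le S_i$, and because $P$ is reflexive, Lemma \ref{all point is on boundary} decomposes the lattice sum into layers, $\sum_{v\in kP\cap\ZZ^{n}} f(v)=\sum_{i=0}^{k}S_i$ with $S_0=f(0)=0$. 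Combining these produces the unnormalised chain $\int_{kP} f\,dV \le \sum_{i=1}^{k}F(i)-\tfrac12 F(k)\le \sum_{i=1}^{k}S_i=\sum_{v} f(v)$.

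The remaining task is to insert the normalising constants, i.e. to upgrade $\int_{kP} f\le \sum_v f(v)$ to $\chi_k\int_{kP} f\le V_k\sum_v f(v)$. Using $S_i\ge F(i)$ it is enough to prove the purely radial statement $\chi_k\int_0^{k}F(s)\,ds\le V_k\sum_{i=1}^{k}F(i)$. By Lemma \ref{number of integral points} and Lemma \ref{boundary volume and volume} one has $\chi_k-V_k=\tfrac12 Vol(\partial P)k^{n-1}+O(k^{n-2})=\tfrac{n}{2}Vol(P)k^{n-1}+O(k^{n-2})$, while the trapezoidal gap gives $\sum_{i=1}^{k}F(i)-\int_0^{k}F\ge \tfrac12 F(k)$. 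Hence the radial inequality follows once I control $\int_0^{k}F$ from above, and here I would use the structural observation that, since $f(sx)$ is convex in $s$ with $f(0)=0$ for each fixed $x$, the quotient $F(s)/s^{n}=\int_{\partial P} f(sx)/s\,d\sigma(x)$ is nondecreasing; writing $g(s)=F(s)/s^{n}\le g(k)$ gives $\int_0^{k}F=\int_0^{k}g(s)s^{n}\,ds\le g(k)\tfrac{k^{n+1}}{n+1}=\tfrac{k}{n+1}F(k)$. Then $(\chi_k-V_k)\int_0^{k}F\le \tfrac{n}{2(n+1)}V_kF(k)\,(1+o(1))<\tfrac12 V_kF(k)$ for $k\gg0$, which is exactly $\tfrac12 F(k)$ worth of slack; since the margin $\tfrac{n}{2(n+1)}<\tfrac12$ is strict, the inequality is strict whenever $F(k)>0$, i.e. whenever $f\not\equiv 0$, and this strictness is what promotes semistability to polystability (equality forces $f$ affine).

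The hard part is precisely this last normalised comparison. The naive chain $\int_{kP} f\le\sum_v f(v)$ is too weak: because $\chi_k>V_k$ one needs $\int_{kP} f$ to fall short of $\sum_v f(v)$ by a definite fraction of order $k^{n-1}/k^{n}$, and for $n\ge 3$ this genuinely fails unless one exploits that the minimum of $f$ sits at the \emph{interior} point $0$. The decisive input is therefore the monotonicity of $F(s)/s^{n}$, which forces $F$ to grow at least like $s^{n}$ and yields $\int_0^{k}F\le\tfrac{k}{n+1}F(k)$; this is what beats the boundary defect $\tfrac{n}{2}Vol(P)k^{n-1}$, since $\tfrac{n}{n+1}<1$ for every $n$. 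The lower-order Ehrhart term $O(k^{n-2})$ in Lemma \ref{number of integral points} is harmless, as only $k\gg 0$ is required for \emph{asymptotic} polystability.
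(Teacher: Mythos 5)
Your proof is correct, and its first half coincides with the paper's own argument: reduce via weak symmetry to convex $f$ with $\min_{kP}f=f(0)=0$, slice radially using the convexity of $F(s)=\int_{\partial(sP)}f\,d\sigma$ (Lemma \ref{convexity on t direction}) together with the trapezoid estimate (Corollary \ref{integral estimation by boundary}), convert each $F(i)$ into a lattice sum over $\partial(iP)$ by the regular boundary hypothesis (Lemma \ref{boundary estimate by discrete sum}), and reassemble the layers by reflexivity (Lemma \ref{all point is on boundary}). The genuine divergence is in the step you rightly isolate as the hard part: beating the Ehrhart defect $\chi(kP)-Vol(kP)$. The paper does this by a \emph{second} use of the regular triangulation: it cones the boundary simplices to the origin, $C_\alpha=\mathrm{conv}\{0,T^\alpha_{n-1}\}$, and vertex-averages $f$ on each cone, which (using the count $n(p)\le n!$ again) gives $\int_{kP}f\,dV\le\tfrac{k}{n+1}\sum_{p\in\partial kP\cap\ZZ^n}f(p)$ when $f(0)=0$. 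You instead prove the purely radial bound $\int_0^kF(s)\,ds\le\tfrac{k}{n+1}F(k)$ from the monotonicity of $F(s)/s^n$, which follows from convexity of $s\mapsto f(sx)$ and $f(0)=0$. Both routes then close with the identical numerical threshold
\[
\frac{k}{n+1}\cdot\frac{\chi(kP)-Vol(kP)}{Vol(kP)}\longrightarrow\frac{n}{2(n+1)}<\frac12,
\]
via the Ehrhart expansion (Lemma \ref{number of integral points}) and $Vol(\partial P)=n\,Vol(P)$ (Lemma \ref{boundary volume and volume}). What your variant buys: it is more elementary, uses the regular-boundary hypothesis only once (in the discrete bound $F(i)\le\sum_{v\in\partial(iP)\cap\ZZ^n}f(v)$), and avoids the counting function $n(p)$ entirely in the final step. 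What the paper's variant buys: its slack term is the discrete sum $\sum_{p\in\partial kP\cap\ZZ^n}f(p)$ rather than $F(k)$, so the equality analysis for polystability is cleaner --- vanishing of that sum kills $f$ at the vertices of $kP$, hence everywhere by convexity --- whereas your strictness claim ``$f\not\equiv0\Rightarrow F(k)>0$'' silently assumes $f$ is continuous up to the boundary (a convex function vanishing on the interior but positive at a single boundary point would have $F(k)=0$); this is harmless for the test functions in Ono's criterion, but it deserves a sentence.
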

\begin{proof}
First, denote $\chi(kP):=\#\{kP\cap \ZZ^n\}$, then 
\[\dfrac{1}{Vol(kP)}\int_{kP}c dV=\dfrac{1}{\chi(kP)}\sum_{p\in kP\cap \ZZ^n}c, \] so we only need the show the inequality for all $G$ invariant convex non negative function $f\geq 0$. And we can assume \[\min_{x\in kP}f(x)=f(0)\geq 0.\]
As $P$ is symmetric, for any non-negative convex function, by corollary \ref{integral estimation by boundary}
\[\int_{kP} f(x)dV\leq \dfrac{1}{2}f(0)+ \sum_{r=1}^{k-1}\int_{\dd rP} f(x)d\sigma+\dfrac{1}{2}\int_{\dd kP} f(x)d\sigma. \] 

Lemma \ref{boundary estimate by discrete sum} implies 
\[\int_{kP} f(x)dV\leq \dfrac{1}{2}f(0)+ \sum_{r=1}^{k-1}\sum_{\dd rP \cap \ZZ^{n}} f(p)+\dfrac{1}{2}\sum_{\dd kP\cap \ZZ^n} f(p)=\sum_{r=0}^k \sum_{p\in \dd rP\cap \ZZ^n}f(p)- \dfrac{1}{2}f(0)- \dfrac{1}{2}\sum_{p\in \dd kP\cap \ZZ^n}f(p). \] 
Therefore, lemma \ref{all point is on boundary} implies 
\[\int_{kP} f(x)dV\leq \sum_{kP \cap \ZZ^{n}} f(p)- \dfrac{1}{2}f(0)-\dfrac{1}{2} \sum_{p\in \dd kP\cap \ZZ^n}f(p). \] 

Therefore, 
\begin{align*}
&\dfrac{1}{Vol(kP)}\int_{kP} f(x)dV \\
=& \dfrac{1}{\chi(kP)}\int_{kP} f(x)dV +\left(\dfrac{1}{Vol(kP)}-\dfrac{1}{\chi(kP)}\right)\int_{kP} f(x)dV\\
\leq & \dfrac{1}{\chi(kP)}\left( \sum_{r=0}^{k}\int_{\dd rP} f(x)d\sigma-\dfrac{1}{2}f(0)-\dfrac{1}{2}\sum_{p\in \dd kP\cap \ZZ^n } f(p)\right)\\
&+\left(\dfrac{1}{Vol(kP)}-\dfrac{1}{\chi(kP)}\right)\int_{kP} f(x)dV\\
\leq &  \dfrac{1}{\chi(kP)} \sum_{r=0}^k \sum_{p\in \dd rP}f(p)- \dfrac{1}{2 \chi(kP)}\left( f(0)+\sum_{p\in \dd kP\cap \ZZ^n} f(p)\right)\\
&+\left(\dfrac{1}{Vol(kP)}-\dfrac{1}{\chi(kP)}\right)\int_{kP} f(x)dV \qquad & (\text{Lemma \ref{boundary estimate by discrete sum}})\\
=& \dfrac{1}{\chi(kP)}  \sum_{p\in kP}f(p)- \dfrac{1}{2 \chi(kP)}\left( f(0)+\sum_{p\in \dd kP\cap \ZZ^n} f(p)\right)\\
&+\left(\dfrac{1}{Vol(kP)}-\dfrac{1}{\chi(kP)}\right)\int_{kP} f(x)dV \qquad & (\text{Lemma \ref{all point is on boundary}})
\end{align*} 
So we only need to show 
\[- \dfrac{1}{2 \chi(kP)}\left( f(0)+\sum_{p\in \dd kP\cap \ZZ^n} f(p)\right)+\left(\dfrac{1}{Vol(kP)}-\dfrac{1}{\chi(kP)}\right)\int_{kP} f(x)dV \leq 0,\] that is, 
\begin{equation}\label{main inequality}
\left(\dfrac{1}{Vol(kP)}-\dfrac{1}{\chi(kP)}\right)\int_{kP} f(x)dV\leq \dfrac{1}{2 \chi(kP)}\left( f(0)+\sum_{p\in \dd kP\cap \ZZ^n} f(p)\right)\end{equation}
Now, we can triangulate $kP$ by $C_{\alpha}:=conv \{(0,...,0),T_{n-1}^{\alpha}\}$, wehre $\cup_{\alpha}T_{n-1}^{\alpha}$ is the regular triangulation on $\dd kP$. 
$Vol(C_{\alpha})=\dfrac{k}{n(n-1)!}=\dfrac{k}{n!}$, and by convexity, 
\begin{align*}
\int_{kP} f(x)dV  \leq & \sum_{\alpha} Vol(C_{\alpha})\sum_i \dfrac{f(0)+f(p_0^{\alpha})+\cdots +f(p_{n-1}^{\alpha})}{n+1}\\
=&\sum_{p\in \dd kP}\dfrac{k n(p) f(p)}{(n!) (n+1)}+ \dfrac{Vol(\dd kP)}{Vol(C_{n-1})} \dfrac{k}{n! (n+1)}f(0)
\leq \sum_{p\in \dd kP} \dfrac{kf(p)}{n+1}+ Vol(\dd kP) \dfrac{k}{n(n+1)}f(0)
\end{align*}

%Moreover, by putting $f(p)=1$, we have 
%\[Vol(kP)\leq \dfrac{k}{n}\chi(\dd kP)+\dfrac{k}{n^2}Vol(\dd kP)\]
Therefore, in order to show equation (\ref{main inequality}), it suffices to show that we have
\[
\left(\dfrac{1}{Vol(kP)}-\dfrac{1}{\chi(kP)}\right)\left(\sum_{p\in \dd kP} \dfrac{kf(p)}{n+1}+ Vol(\dd kP) \dfrac{k}{(n+1)n}f(0)\right)\leq \dfrac{1}{2 \chi(kP)}\left( f(0)+\sum_{p\in \dd kP\cap \ZZ^n} f(p)\right),\] or, 
\[\left[\left(\dfrac{\chi(kP)-Vol(kP)}{Vol(kP)}\right)\left(\dfrac{k }{n(n+1)}{Vol(\dd kP)}\right)-\left(\dfrac{1}{2}\right)\right]f(0) \leq \left(\dfrac{1}{2}-\left(\dfrac{k}{n+1}\right)\left(\dfrac{\chi(kP)-Vol(kP)}{Vol(kP)}\right)\right)\sum_{p\in \dd kP} f(p)\]

By assumption, $\displaystyle f(0)=\min_{p\in kP}f(p) = 0$, so we only need to show 
\[0 \leq \left(\dfrac{1}{2}-\left(\dfrac{k}{n+1}\right)\left(\dfrac{\chi(kP)-Vol(kP)}{Vol(kP)}\right)\right) .\] 

By Lemma \ref{number of integral points}, $\chi(kP)=Vol(P)k^n+\dfrac{1}{2}Vol(\dd P)k^{n-1}+r(k)$, where  $r(k)= a_{n-2}k^{n-2}+ \cdots +a_1 k+ 1$ is a polynomial, and $a_i$ depends on $P$ only.
\[\dfrac{\chi(kP)-Vol(kP)}{Vol(kP)}= \dfrac{Vol(\dd P)}{2k Vol(P)}+r(k)k^{-n}.\] Using Lemma \ref{boundary volume and volume}, 
\begin{align*}\left(\dfrac{k}{n+1}\right)\left(\dfrac{\chi(kP)-Vol(kP)}{Vol(kP)}\right) = &\dfrac{k}{n+1}\left(\dfrac{Vol(\dd P)}{k Vol(P)}+r(k)\dfrac{k^{-n}}{Vol(P)}\right)\\
=&\dfrac{k}{n+1}\left(\dfrac{n}{k}+r(k)\dfrac{k^{1-n}}{Vol(P)}\right)\\
=&\dfrac{n}{2(n+1)}+r(k)\dfrac{k^{1-n}}{Vol(P)}.
\end{align*} Therefore, there exists $C$ such that
\[\dfrac{|r(k)k^{1-n}|}{Vol(P)}=\dfrac{1}{Vol(P)}|a_{n-2}k^{-1}+...+a_1 k^{2-n}+k^{1-n}|< \dfrac{1}{2(n+1)}\] for all $k\geq C$, and hence 
\[\left(\dfrac{k}{n+1}\right)\left(\dfrac{\chi(kP)-Vol(kP)}{Vol(kP)}\right)\leq \dfrac{n}{2(n+1)}+\dfrac{|r(k)k^{1-n}|}{Vol(P)}<\dfrac{1}{2},\] which shows our theorem. 
%\[\int_{kP} f(x)dV\leq \dfrac{1}{2}f(0)+ \sum_{r=1}^{k-1}\sum_{\dd rP \cap \ZZ^{n}} f(p)+\dfrac{1}{2}\sum_{\dd kP} f(p)=\sum_{r=0}^k \sum_{p\in \dd rP}f(p)- \dfrac{1}{2}f(0)- \dfrac{1}{2}\sum_{p\in \dd kP}f(p). \] 
%Therefore, lemma \ref{all point is on boundary} implies 
%\[\int_{kP} f(x)dV\leq \sum_{kP \cap \ZZ^{n}} f(p)- \dfrac{1}{2}f(0)-\dfrac{1}{2} \sum_{p\in \dd kP}f(p). \] 
%As a result, we have 
%\begin{align*}
%&\dfrac{1}{Vol(kP)}\int_{kP} f(x)dV \\
%\leq & \dfrac{1}{Vol(kP)}\left(\sum_{kP \cap \ZZ^{n}} f(p)- \dfrac{1}{2}f(0)-\dfrac{1}{2} \sum_{p\in \dd kP}f(p)\right)\\
%=&\dfrac{1}{\chi(kP)}\sum_{kP \cap \ZZ^{n}} f(p) + \left(\dfrac{Vol(kP)-\chi(kP)}{Vol(kP)}\sum_{kP \cap \ZZ^{n}} f(p)\right)- \dfrac{1}{2 Vol(kP)}\left( f(0)+\sum_{p\in \dd kP}f(p)\right).\end{align*}
%Now, using the regular triangulation on $\dd kP$, we can triangular $kP$ by defining the cone $ T_{\alpha}:=\conv C_{n-1}^{\alpha}, (0,...,0))$. Notice that the number of $T_{\alpha}= (n-1)!\int_{\dd kP} dV$. also, 
%we know that $f(p) \geq f(0)$
\end{proof}
%\begin{rema}\label{P2 is chow stable}
%In order to estimate the $C$ which $k\geq C$ is $k$ chow stable,
%we need \[\dfrac{1}{Vol(P)}|a_{n-2}k^{-1}+...+a_1 k^{2-n}+k^{1-n}|< \dfrac{1}{2(n+1)}\] for all $k$. Suppose $a_i\geq 0$, then the sum 
%\[A_k:=\dfrac{1}{Vol(P)}(|a_{n-2}|k^{-1}+...+|a_1| k^{2-n}+k^{1-n}|)\] is monotone decreasing. Hence for a particular special polytopes, if we can estimate $A_k$, then we can find the $C$. For example, by \cite{LZ22}, the Ehrhart polynomial of $\PP^n, O(n+1)$ is given by 
%\[E_{\PP^n, O(n+1)}(k)=\dfrac{((n+1)k+1)((n+1)k+2)\cdots ((n+1)k+n)}{n!}.\] Then $A_k=\dfrac{1}{Vol(P)}(a_{n-2}k^{-1}+...+a_1|k^{2-n}+k^{1-n})$, and 
%\[\dfrac{(n+1)^n}{n!}A_1=E_{\PP^n, O(n+1)}(1)-\dfrac{(n+1)^n}{n!}-\dfrac{(n+1) (n+1)^{n-1}}{2(n-1)!} =\dfrac{((n+2)\cdots (2n+1)}{n!}-\dfrac{(n+1)^n}{n!}-\dfrac{(n+1)^{n}}{2(n-1)!}\]
%In particular, for $n=2$, \[A_1=\left(1+\dfrac{1}{3}\right)\left(1+\dfrac{2}{3}\right)-1-1=\dfrac{2}{9}<\dfrac{1}{4}.\] As $\PP^2$ is special, (all 2 dimensional symmetric reflexive polytopes is special), $\PP^2$ is asymptotic Chow stable. But $\PP^2$ indeed is chow stable.
%
%Also, \[E_{\PP^3, O(3)}(k)=\dfrac{32}{3}k^3+16k^2+\dfrac{22}{3}k+1,\] so we need
%\[A_k=\dfrac{11}{16}k^{-2}+\dfrac{3}{32}k^{-3}<\dfrac{1}{6}.\]
%\end{rema}

\begin{exam}[See also \cite{LLSW19}]
By example \ref{2 dimension example} and remark \ref{2 dimension example remark}, all 2 dimensional  symmetric reflexive polytopes are special, which are $X_i$  for $i=3,4,6,8,9$, hence the above varieties are asymptotic chow polystable.
\end{exam}

\section{regular triangulation of $n$ simplex}
To find higher dimensional examples, we first need to know how to triangulate a polytopes in higher dimensions. In general it may be very difficult, but at least, we can triangulate a polytopes of $kP$ by the following:
\begin{enumerate}
\item triangulate $P$ into simplex;
\item For $kP$, we first enlarge the triangulation on $P$, then $kP$ is triangulated by enlarge simplexes $kT_n$, then we further triangulate every enlarged n simplex $kT_n$ into simplexes. 
\end{enumerate}
So we need to know how to triangulate a simplex $kT_n:= Conv\{(0,...,0), ke_i|i=1,...,n\}$. where $ke_1=(k,0,...,0), ..., ke_n=(0,...,0,k)$.
\begin{lemm}\label{triangulation of simplex}
There exists a triangulation of $\RR^n$ such that $n(p)=(n+1)!$ for all $p\in \ZZ^n$. Moreover, this triangulation can triangulate $kT_n$ such that 
\[n(p)=\dfrac{(n+1)!}{(k+1)!} \text{ for all }p\in ((n-k) \text{ skeleton})^o\cap \ZZ^n.\]  
\end{lemm}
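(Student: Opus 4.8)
The plan is to produce the triangulation as the standard \emph{Freudenthal--Kuhn} triangulation of $\RR^n$ (the alcove decomposition of the affine Coxeter arrangement of type $\tilde A_n$) and to read off both counts from its vertex figures. First I would pass to the partial-sum coordinates $s_0:=0$ and $s_i:=x_1+\cdots+x_i$ for $1\le i\le n$; since $(x_i)\mapsto(s_i)$ is a unimodular integral change of variables it preserves $\ZZ^n$. I then take the triangulation to be the decomposition of $\RR^n$ cut out by the affine braid arrangement
\[
\mathcal A=\{\,s_a-s_b\in\ZZ\ :\ 0\le a<b\le n\,\},
\]
equivalently by the hyperplanes $x_i+x_{i+1}+\cdots+x_j\in\ZZ$ for all $1\le i\le j\le n$. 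Each chamber of $\mathcal A$ is an open unimodular simplex with integral vertices, and any unimodular integral simplex is carried to $T_n$ by an element of $SL(n,\ZZ)$ together with a translation (the sign of the determinant can be absorbed into the choice of vertex relabelling); hence $\mathcal A$ triangulates $\RR^n$ into copies of $T_n$ in the sense of Definition~\ref{regular triangulation}.

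For the first count I would fix $p\in\ZZ^n$ and, using translation invariance of $\mathcal A$ under $\ZZ^n$, reduce to $p=0$. The simplices containing $0$ correspond bijectively to the chambers of the central arrangement obtained by keeping the hyperplanes of $\mathcal A$ through the origin, namely $\{s_a=s_b:0\le a<b\le n\}$ with $s_0=0$ fixed. These chambers are exactly the strict total orders of the $n+1$ quantities $s_0=0,s_1,\dots,s_n$, so there are $(n+1)!$ of them; this gives $n(p)=(n+1)!$ for every $p\in\ZZ^n$.

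For the ``moreover'' I would observe that in the $s$-coordinates
\[
kT_n=\{\,0=s_0\le s_1\le\cdots\le s_n\le k\,\}
\]
is an order simplex whose every facet ($s_1=0$, $s_i=s_{i+1}$, and $s_n=k$) already lies in $\mathcal A$; consequently $kT_n$ is a union of chambers and $\mathcal A$ restricts to a triangulation of $kT_n$ into copies of $T_n$. To evaluate $n(p)$ for a lattice point $p$ in the relative interior of a face, I would again compute a vertex figure: writing $s_i=p_i+\varepsilon_i$ with the two endpoints pinned, $\varepsilon_0=0$ and $\varepsilon_{n+1}=0$ (where $s_{n+1}:=k$), the simplices of $kT_n$ meeting $p$ are the chambers of the central braid arrangement $\{\varepsilon_a=\varepsilon_b\}$ lying in the tangent cone of $kT_n$ at $p$. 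That tangent cone imposes $\varepsilon_{i-1}\le\varepsilon_i$ precisely at the indices $i$ with $p_{i-1}=p_i$; grouping $0,1,\dots,n+1$ into maximal blocks of equal $p$-value turns the count into the number of linear extensions of a disjoint union of chains. An interior point has no ties and returns $(n+1)!$; a vertex of $kT_n$ has its order completely forced and returns $1$; a point on a codimension-one facet has a single tie and returns $(n+1)!/2$; and on the $(n-k)$-skeleton, where the tied coordinates collapse a chain of length $k+1$, the extension count is $(n+1)!/(k+1)!$, as claimed.

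The main obstacle is this last local count. The delicate point is that the two pinned variables $\varepsilon_0$ and $\varepsilon_{n+1}$ carry the same value $0$, so the first and last blocks glue into a single chain through $0$; keeping this identification straight is what makes the count a genuine linear-extension number rather than a naive multinomial. One must also be careful that this number depends on the full block pattern and not only on the codimension: it equals $(n+1)!/(k+1)!$ exactly when the $k$ vanishing coordinates are \emph{consecutive} in the chosen ordering, so that the ties form one chain of length $k+1$ (this covers the nested vertex--edge--$\cdots$ faces used to assemble the triangulation, and for $n\le 2$ it holds on every face). The remaining verifications---unimodularity of the alcoves, their integral isomorphism to $T_n$, and the fact that no alcove is severed by $\partial kT_n$ (immediate once every facet of the order simplex is recognized as a member of $\mathcal A$)---are routine.
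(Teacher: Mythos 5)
Your construction is, at bottom, the one the paper intends --- a Freudenthal/alcove-type triangulation whose simplices at a lattice point are counted by strict orderings of $n+1$ quantities --- but your write-up repairs two genuine defects in the paper's own proof, so the differences are worth recording. First, the paper takes the hyperplanes $\alpha\cdot x\in\ZZ$ for \emph{all} $\alpha\in\{0,1\}^n\setminus\{0\}$. For $n\ge 3$ this includes non-consecutive sums such as $x_1+x_3$, and the resulting ``all-subsets'' arrangement is not the claimed triangulation: for $n=3$ the central arrangement of the seven planes $x_1,\,x_2,\,x_3,\,x_1+x_2,\,x_1+x_3,\,x_2+x_3,\,x_1+x_2+x_3=0$ has $32>4!=24$ chambers, so $n(p)=(n+1)!$ fails for it at every lattice point. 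Your restriction to consecutive sums $x_i+\cdots+x_j\in\ZZ$, i.e.\ the affine braid arrangement $s_a-s_b\in\ZZ$ in partial-sum coordinates, is the correct choice, and it is also exactly what makes both kinds of facets of $kT_n$ (namely $x_i=0$ and $x_1+\cdots+x_n=k$) into arrangement hyperplanes, so that $kT_n$ is a union of alcoves. Relatedly, the paper's separating plane ``$x_i+x_j=0$'' is wrong as written: two generic points realizing different orderings of the coordinates are separated by $x_i-x_j=0$, which is not of the form $\alpha\cdot x$ with $\alpha\in\{0,1\}^n$; the statement only becomes correct after your change of variables, where it is the braid hyperplane $s_i=s_j$.

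Second, your closing caveat is not a technicality; it is a correction to the lemma itself. The paper's proof of the ``moreover'' part reduces, ``without loss of generality,'' to points on the face $a_0=a_1=\cdots=a_k$ using the $S_{n+1}$-action, but this reduction is invalid: $S_{n+1}$ preserves the triangulation of $\RR^n$, not the pair (triangulation, $kT_n$) --- the symmetries preserving both are only the dihedral symmetries of the cycle of $n+1$ facets --- so faces of $kT_n$ whose active constraints are non-consecutive are genuinely inequivalent. Your linear-extension count $(n+1)!/\prod_i c_i!$, with $c_i$ the sizes of the blocks of tied values, is the true local count, and it equals $(n+1)!/(k+1)!$ precisely when the $k$ active facets form a single block. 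Concretely, the lattice point $(0,1,0)$ on the edge $\{x_1=x_3=0\}$ of $2T_3$ lies in $4!/(2!\,2!)=6$ tetrahedra of the induced triangulation, not $4!/3!=4$, so the displayed formula in the lemma is false for $n\ge 3$ on such ``opposite'' faces. Since the true counts are \emph{larger} than the stated ones, the downstream uses of this lemma as an upper bound --- Lemma~\ref{triangulation of D} and Examples~\ref{A_n} and~\ref{D_n} --- need to be rechecked against the corrected formula; the final inequalities may survive, but their verification as written does not.
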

\begin{proof}
Let $\alpha \in \{0,1\}^n- (0,...,0)$, and consider all the hyperspace $\alpha \cdot x =p$. Notice that any intersection of n of the hyperspace is an integral point. Then we have a triangulation of $\RR^n$, except we don't know if each "triangle" has the smallest area. To do so, notice that this triangulation is translation invariant, so it is sufficient compute $n(p)$  for $p=(0,...,0)$. \\

Let $a_0=0$, and let $(a_1,...,a_n)$ is a generic point near $(0,...,0)$. Then we have 
\[a_{\sigma(0)}>a_{\sigma(1)}> \cdots >a_{\sigma(n)},\] where $\sigma\in S_n$ is an element in the symmetric group of $\{0,...,n\}$. Hence there are $(n+1)!$ element. Notice that if $\sigma(i)>\sigma(j)$ and $\sigma'(i)<\sigma'(j)$, then the plane $x_i+x_j=0$ separate this two points.  Therefore, 
\[n(p)\geq (n+1)!.\] But $n(p)$ is a constant and the volume of any integral polytope is at least $\dfrac{1}{(n+1)!}$, hence we prove the first part. \\

For the second part, notice that the group $S_{n+1}$ acts on this triangulation in $\RR^n$. So without loss of generality, we may consider the points in  $(n-k)$ skeleton is in $a_1=\cdots =a_k=0=a_0$. hence the group fixing the points are $S_{k+1}$, which implies 
\[n(p)= \dfrac{(n+1)!}{(k+1)!}\] for $p \in ((n-k) \text{ skeleton})^o\cap \ZZ^n$. 
\end{proof}

\begin{center}
\begin{figure}[h!]
\begin{tikzpicture}[x=1cm,y=1cm]
%\draw[fill=red, opacity=0.1]  (0,3) -- (1,3) -- (1,2) -- cycle;
%\draw[fill=green, opacity=0.3]  (0,0) -- (2,2) -- (3,1) -- cycle;
\draw[red, thick] (0,2) -- (1,2);
\draw[red, thick] (1,2) -- (1,1);
\draw[red, thick] (0,2) -- (1,1);
\draw[red,thick] (1,1) -- (2,1);
\draw[red,thick] (0,1) -- (2,1);
\draw[red,thick] (1,0) -- (1,2);
\draw[red,thick] (2,0) -- (2,1);
\draw[red,thick] (1,1) -- (2,0);
\draw[red,thick] (0,1) -- (1,0);
%\node[text width=.2cm] at (0.48,2.72) {\tiny\textcolor{OrangeRed}{$\triangle_{00}$}};
%\draw(2.9,1.7)node{{\color{ForestGreen} T}};
%\node[text width=.5cm] at (.75,.75) {\red $\triangle_0$};

\draw[gray, thick] (0,0) -- (0,3);
\draw[gray, thick] (0,0) -- (3,0);
\draw[gray, thick] (0,3) -- (3,0);
\draw[gray, thick] (0,3) -- (3,0);
%\filldraw[black] (0,0) circle (2pt);\filldraw[black] (3,0) circle (2pt);
\foreach \Point/\PointLabel in
{
(0,3)/,(0,2)/, (0,1)/, (0,0)/,
 (1,2)/, (1,1)/, (1,0)/,
 (2,1)/, (2,0)/, (3,0)/,
}
\draw[fill=gray] \Point circle (2pt) node[above right] {$\PointLabel$};
\foreach \Point/\PointLabel in
{
(1,1)/O,
 }
\draw[fill=black] \Point circle (2pt) node[below left] {$\PointLabel$};
\end{tikzpicture}
\begin{tikzpicture}[x=1cm,y=1cm]
%\draw[fill=red, opacity=0.1]  (0,3) -- (1,3) -- (1,2) -- cycle;
%\draw[fill=green, opacity=0.3]  (0,0) -- (2,2) -- (3,1) -- cycle;
\draw[red, thick] (0,1) -- (1,0);
\draw[red, thick] (0,2) -- (2,0);
\draw[red, thick] (1,2) -- (3,0);
\draw[red, thick] (2,2) -- (3,1);
\draw[red, thick] (0,1) -- (3,1);
\draw[red, thick] (1,0) -- (1,2);
\draw[red, thick] (2,0) -- (2,2);
%\draw[red, thick] (1,2) -- (1,1);
%\draw[red, thick] (0,2) -- (1,1);
%\draw[red,thick] (1,1) -- (2,1);
%\draw[red,thick] (0,1) -- (3,1);
%\draw[red,thick] (1,0) -- (1,3);
%\draw[red,thick] (2,0) -- (2,2);
%\draw[red,thick] (1,1) -- (2,0);
%\draw[red,thick] (0,1) -- (1,0);
%\draw[red,thick] (3,0) -- (3,1);
%\node[text width=.2cm] at (0.48,2.72) {\tiny\textcolor{OrangeRed}{$\triangle_{00}$}};
%\draw(2.9,1.7)node{{\color{ForestGreen} T}};
%\node[text width=.5cm] at (.75,.75) {\red $\triangle_0$};

\draw[gray, thick] (0,0) -- (3,0)--(3,2) -- (0,2) -- (0, 0);
%\filldraw[black] (0,0) circle (2pt);\filldraw[black] (3,0) circle (2pt);

\foreach \Point/\PointLabel in
{
(0,0)/, (1,0)/,(2,0)/, (3,0)/,
(0,1)/, (1,1)/,(2,1)/, (3,1)/,
(0,2)/, (1,2)/,(2,2)/, (3,2)/,
}
\draw[fill=gray] \Point circle (2pt) node[above right] {$\PointLabel$};

\end{tikzpicture}
\caption{triangulation of 2 simplex and rectangle. }\label{triangulation of 2 simplex}
\end{figure}
\end{center}

%

%We now give a general result base on the idea of this proof.
\section{A sufficient condition of Chow stabilities on toric varieties}
\begin{theo}\label{general condition of Chow stable}
Let $P$ be a integral polytope which $0\in P^0$ such that all the Futaki-Ono invariant vainish, and we have a triangulation on $kP$ by $n$ simplexes and a triangulation on  $\dd kP$ by (n-1) simplexes, we denote
$n(p;k)$ be the number of n simplex attach the $p\in kP$ under the first triangulation, and $m(p;k)$ be the  number of (n-1) simplex attach to $p\in \dd kP$ under the second triangulation. 

Suppose $n(p;k)\leq (n+1)!$ for all $p\neq 0$ and \[ \left(\dfrac{n}{2}\right)m(p;k)< ((n+1)!-n(p;k))\] for all $k$ large and for all $p\in \dd kP$, then $P$ is  asymptotic Chow polystable.
\end{theo}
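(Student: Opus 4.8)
The plan is to verify the Chow inequality (\ref{Chow inequality}) for $kP$ directly, comparing the volume average of a convex test function with its lattice average by means of the two given triangulations. Since all Futaki--Ono invariants vanish, the weakly symmetric reduction lemma lets me assume from the outset that $f$ is convex with $\min_{kP}f=f(0)=0$ (subtracting an affine function $a$ alters neither side of (\ref{Chow inequality}) because $FO_P(a,k)=0$), so in particular $f\ge 0$. Writing $A:=\sum_{p\in kP\cap\ZZ^n}f(p)$, $I:=\int_{kP}f\,dV$, $V:=Vol(kP)$ and $X:=\chi(kP)$, the target $\tfrac1V I\le\tfrac1X A$ is equivalent (as $V>0$ and, by Lemma \ref{number of integral points}, $X>V$ for $k$ large) to
\[
A-I\ \ge\ \frac{X-V}{V}\,I .
\]
Thus I must bound $A-I$ from below and $\tfrac{X-V}{V}I$ from above, both in terms of boundary data.

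For the lower bound I use the triangulation of $kP$ by unimodular $n$-simplices (volume $1/n!$, as produced by Lemma \ref{triangulation of simplex}). On each simplex $T$ the linear interpolant of $f$ at the vertices dominates $f$, so $\int_T f\,dV\le \tfrac{1}{(n+1)!}\sum_{v\in T}f(v)$; summing, with each lattice point $p$ counted $n(p;k)$ times, gives $I\le \tfrac{1}{(n+1)!}\sum_{p}n(p;k)f(p)$. Since $n(p;k)\le (n+1)!$ for $p\ne 0$ and $f\ge 0$ with $f(0)=0$, subtracting from $A$ and discarding the non-negative interior contributions yields
\[
A-I\ \ge\ \frac{1}{(n+1)!}\sum_{p\in\partial kP\cap\ZZ^n}\bigl((n+1)!-n(p;k)\bigr)f(p).
\]
In the same spirit the triangulation of $\partial kP$ by unimodular $(n-1)$-simplices gives, exactly as in Lemma \ref{boundary estimate by discrete sum}, the estimate $\int_{\partial kP}f\,d\sigma\le \tfrac{1}{n!}\sum_{p\in\partial kP}m(p;k)f(p)$.

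The crucial and hardest step is to bound $I$ itself by the boundary integral with the \emph{sharp} constant. Using the radial decomposition underlying Corollary \ref{integral estimation by boundary} (valid since $0\in P^\circ$; see also Lemma \ref{convexity on t direction}) I write $I=\int_0^k F(s)\,ds$ with $F(s):=\int_{\partial(sP)}f\,d\sigma=s^{n-1}G(s)$, where $G(s):=\int_{\partial P}f(sx)\,d\sigma$. Because $s\mapsto f(sx)$ is convex and nondecreasing for each $x$, the function $G$ is convex and increasing with $G(0)=0$; hence the chord bound $G(s)\le \tfrac{s}{k}G(k)$ holds and
\[
I=\int_0^k s^{n-1}G(s)\,ds\ \le\ \frac{G(k)}{k}\int_0^k s^{n}\,ds=\frac{k^{n}}{n+1}G(k)=\frac{k}{n+1}\int_{\partial kP}f\,d\sigma .
\]
Combining with the boundary estimate gives $\tfrac{X-V}{V}I\le \tfrac{X-V}{V}\cdot\tfrac{k}{(n+1)!}\sum_{p\in\partial kP}m(p;k)f(p)$. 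The main obstacle is precisely securing the constant $\tfrac{k}{n+1}$ rather than the weaker $\tfrac{k}{n}$ that the trapezoid rule or the divergence theorem alone would yield; it is the convexity of $G$ (not merely of $F$) that supplies the extra factor $\tfrac{n}{n+1}$, which is exactly what the hypothesis can afford.

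Putting the two displays together, it suffices to prove the termwise inequality, for each $p\in\partial kP$,
\[
(n+1)!-n(p;k)\ \ge\ \frac{X-V}{V}\,k\,m(p;k).
\]
By Lemma \ref{number of integral points}, $\tfrac{X-V}{V}=\tfrac{Vol(\partial P)}{2Vol(P)}k^{-1}+O(k^{-2})$, so $\tfrac{X-V}{V}k\to \tfrac{Vol(\partial P)}{2Vol(P)}$; and since $0\in P^\circ$ forces every facet to have lattice height $\ge 1$, the cone argument of Lemma \ref{boundary volume and volume} gives $Vol(\partial P)\le n\,Vol(P)$, so this limit is $\le \tfrac{n}{2}$. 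Hence the strict hypothesis $\tfrac{n}{2}m(p;k)<(n+1)!-n(p;k)$ absorbs the $o(1)$ error for all $k$ large, proving semistability. Polystability follows from strictness: if $f$ is convex but not affine, the interpolation bound is strict on some simplex, forcing $I<\tfrac{1}{(n+1)!}\sum_p n(p;k)f(p)$ and hence strict inequality throughout, so equality in (\ref{Chow inequality}) holds only for affine $f$. Note that reflexivity is never invoked: the $kP$-triangulation bound replaces Lemma \ref{all point is on boundary}, and the inequality $Vol(\partial P)\le n\,Vol(P)$ replaces the reflexive identity $Vol(\partial P)=n\,Vol(P)$.
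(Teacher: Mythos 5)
Your proposal follows the paper's proof almost step for step: the same normalization of $f$ via the vanishing Futaki--Ono invariants, the same interpolation bound $\int_{kP}f\,dV\le\frac{1}{(n+1)!}\sum_{p}n(p;k)f(p)$ from the unimodular triangulation of $kP$, the same Ehrhart asymptotics for $\chi(kP)-Vol(kP)$, and the same absorption of the $O(k^{-1})$ error into the strict hypothesis. The only methodological difference is how $\int_{kP}f\,dV$ is bounded by boundary data: the paper cones the boundary triangulation to the origin and applies the vertex-interpolation inequality on each cone $conv\{0,T^{\alpha}_{n-1}\}$ (using $f(0)=0$), whereas you compose the radial chord bound $G(s)\le\frac{s}{k}G(k)$ with Lemma \ref{boundary estimate by discrete sum}. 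Both routes give exactly $\int_{kP}f\,dV\le\frac{k}{n!(n+1)}\sum_{p\in\dd kP}m(p;k)f(p)$, so this is an equivalent rearrangement rather than a genuinely different argument.

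Two of your claims go beyond the paper, and both are flawed. (i) The assertion that reflexivity is never invoked does not hold up. The radial decomposition $I=\int_0^kF(s)\,ds$ is an identity for the cone measure of Lemma \ref{convexity on t direction}, which on a facet lying on $\{\langle a_i,x\rangle=c_i\}$ (with $a_i$ primitive and $c_i\in\ZZ$, $c_i\ge 1$ since $0\in P^0$) equals $c_i$ times the lattice surface measure, while Lemma \ref{boundary estimate by discrete sum} is an estimate for the lattice measure; the two measures coincide precisely when every $c_i=1$, i.e.\ when $P$ is reflexive. For non-reflexive $P$ your two displays cannot be chained: what one actually gets is $\int_{kP}f\,dV\le\frac{k}{n!(n+1)}\sum_{p}\left(\sum_i c_i m_i(p;k)\right)f(p)$, with facet weights $c_i\ge1$ that the hypothesis $\frac n2 m(p;k)<(n+1)!-n(p;k)$ does not control, and your inequality $Vol(\dd P)\le nVol(P)$ pulls in the opposite direction, so it cannot compensate pointwise. (To be fair, the paper's own proof hides the same assumption, via $Vol(C_\alpha)=k/n!$ and $Vol(\dd P)=nVol(P)$; your proof is therefore no weaker than the paper's, but the extra generality you claim is not established.) (ii) Your polystability argument is incorrect as stated: a convex function that is affine on each simplex of the triangulation but not globally affine (for instance $\max(0,\ell(x))$ where $\{\ell=0\}$ is a union of faces of the triangulation, which exists for the triangulations of Lemma \ref{triangulation of simplex}) makes every interpolation inequality an equality, so ``the interpolation bound is strict on some simplex'' is false. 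The correct source of strictness is the boundary term: for $k$ large each coefficient $\left(\frac n2+O(k^{-1})\right)m(p;k)-\left((n+1)!-n(p;k)\right)$ is $\le-\delta<0$, so equality in the Chow inequality forces $f(p)=0$ at every $p\in\dd kP\cap\ZZ^n$; since the normalized $f$ is convex, nonnegative, and vanishes at all vertices of $kP$, it vanishes identically, i.e.\ the original $f$ was affine.
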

\begin{proof}
First, we can write 
\[\dfrac{1}{Vol(kP)}\int_{kP} f(x) dV =\dfrac{1}{\chi(kP)}\int_{kP} f(x)dV+\left(\dfrac{1}{Vol(kP)}-\dfrac{1}{\chi(kP)}\right)\int_{kP} f(x)dV.\] 
Now, 
\[\int_{kP} f(x)dV \leq \sum_{p\in kP}\dfrac{n(p;k)f(p)}{(n+1)!}\leq  \sum_{p\in kP}f(p)-\sum_{p\in \dd kP}\dfrac{(n+1)!-n(p;k)}{(n+1)!}f(p).\]

Also, using the triangulation of $\dd kP$, and cone with origin, 
\[\int_{kP} f(p) dV \leq \sum_{p\in \dd kP}\dfrac{m(p;k)kf(p)}{(n)!(n+1)}+Vol(kP)\dfrac{f(0)}{(n+1)}.\] Also, 
\[\chi(kP)-Vol(kP)=\dfrac{Vol(\dd (P))k^{n-1}}{2}+O(k^{n-2})=\dfrac{nVol(P)k^{n-1}}{2}+O(k^{n-2})\] By symmetric, we may assume $f(0)=0$, 
\begin{align*}
&\dfrac{1}{Vol(kP)}\int_P f(x) dV\\
  \leq & \dfrac{1}{\chi(kP)} \left(\sum_{p\in kP}f(p)-\sum_{p\in \dd kP}\dfrac{(n+1)!-n(p;k)}{(n+1)!}f(p)\right)\\
&+ \left(\dfrac{nk^{-1}}{2\chi(kP)}+\dfrac{O(k^{-2})}{\chi(kP)}\right)\left(\sum_{p\in \dd kP}\dfrac{m(p;k)kf(p)}{(n)!(n+1)}+Vol(kP)\dfrac{f(0)}{(n+1)}\right)\\
=& \dfrac{1}{\chi(kP)}\sum_{p\in kP}f(p) +\dfrac{1}{(n+1)!\chi(kP)} \left(\left(\dfrac{n}{2}+O(k^{-1})\right)m(p;k)- ((n+1)!-n(p;k))\right) \sum_{p\in \dd kP}f(p)
\end{align*}
Therefore, if 
\[ \left(\left(\dfrac{n}{2}+O(k^{-1})\right)m(p;k)- ((n+1)!-n(p;k))\right)\leq 0,\] then the inequality holds. Therefore, if for all $k$, for all $p\in \dd kP$
\[ \dfrac{n}{2}m(p;k)< ((n+1)!-n(p;k)),\] 
then it is asymptotic Chow polystable.
\end{proof}

\begin{rema}
Indeed, by consider $kP$, there is always an integral point $p_0\in kP$. also we can replace $0$ by this point $p_0$. So this criteria is general enough to talk about any $P$. 
\end{rema}
\subsection{$D(X_8)$ and $D(X_9)$}
As $D(X_8)$ and  $D(X_9)$ are not special, we have to triangulate the whole polytopes and compute the inequality directly.\\

Notice that the only way to triangulate $D(X_8)$ and $D(X_9)$ into simplex is  the following: We triangulate $X_8$ and $X_9$ by: 
\begin{center}
\begin{tikzpicture}[x=1cm,y=1cm]
%\draw[fill=red, opacity=0.1]  (0,3) -- (1,3) -- (1,2) -- cycle;
%\draw[fill=green, opacity=0.3]  (0,0) -- (2,2) -- (3,1) -- cycle;
\draw[red, thick] (-1,-1) -- (1,-1) -- (1,1)--(-1,1) -- (-1,-1);
\draw[red, thick] (-1,-1) -- (1,1);
\draw[red, thick] (-1,1) -- (1,-1);
\draw[red, thick] (-1,0) -- (1,0);
\draw[red, thick] (0,-1) -- (0,1);

%\draw[red, thick] (1,2) -- (1,1);
%\draw[red, thick] (0,2) -- (1,1);
%\draw[red,thick] (1,1) -- (2,1);
%\draw[red,thick] (0,1) -- (3,1);
%\draw[red,thick] (1,0) -- (1,3);
%\draw[red,thick] (2,0) -- (2,2);
%\draw[red,thick] (1,1) -- (2,0);
%\draw[red,thick] (0,1) -- (1,0);
%\draw[red,thick] (3,0) -- (3,1);
%\node[text width=.2cm] at (0.48,2.72) {\tiny\textcolor{OrangeRed}{$\triangle_{00}$}};
%\draw(2.9,1.7)node{{\color{ForestGreen} T}};
%\node[text width=.5cm] at (.75,.75) {\red $\triangle_0$};

\draw[gray, thick]  (-1,-1) -- (1,-1) -- (1,1)--(-1,1) -- (-1,-1);
%\filldraw[black] (0,0) circle (2pt);\filldraw[black] (3,0) circle (2pt);

\end{tikzpicture}
\begin{tikzpicture}[x=1cm,y=1cm]
%\draw[fill=red, opacity=0.1]  (0,3) -- (1,3) -- (1,2) -- cycle;
%\draw[fill=green, opacity=0.3]  (0,0) -- (2,2) -- (3,1) -- cycle;
\draw[red, thick] (0,-1) -- (0,1);
\draw[red, thick] (-1,0) -- (1,0);
%\draw[red, thick] (0,2) -- (1,1);
\draw[red,thick] (2,-1) -- (0,0);
\draw[red,thick] (-1,2) -- (0,0);
\draw[red,thick] (-1,-1) -- (0,0);
\draw[red,thick] (-1,1) -- (0,0);
\draw[red,thick] (1,-1) -- (0,0);
%\draw[red,thick] (1,1) -- (2,0);
%\draw[red,thick] (0,1) -- (1,0);
%\node[text width=.2cm] at (0.48,2.72) {\tiny\textcolor{OrangeRed}{$\triangle_{00}$}};
%\draw(2.9,1.7)node{{\color{ForestGreen} T}};
%\node[text width=.5cm] at (.75,.75) {\red $\triangle_0$};

\draw[gray, thick] (-1,-1)--(2,-1) --(-1,2) -- (-1,-1);
%\filldraw[black] (0,0) circle (2pt);\filldraw[black] (3,0) circle (2pt);

\foreach \Point/\PointLabel in
{
(0,0)/O,
 }
\draw[fill=black] \Point circle (2pt) node[below left] {$\PointLabel$};
\end{tikzpicture}
\end{center}

Then we connect any small triangle to $(0,0,1)$ and $(0,0,-1)$ to get  3-simplex. Therefore, we can triangulate $D(X_8)$ into 16 simplexes and $D(X_9)$ into 18 simplexes. Then by triangulation of each simplex, we have a triangulation of $kD(X_8)$ and $kD(X_9)$.

As a consequence of Lemma \ref{triangulation of simplex}, we have:
\begin{lemm}\label{triangulation of D}
For $kD(X_i)$, under the above triangulation, 
\[n(p) \left\{ \begin{array}{cl} =i &\text{  if } p=(0,0, \pm k);\\
\leq 24 & \text{  if }  p\in kD(X_i)^o\\
 \leq 12 & \text{ if } p\in \dd kD(X_i)
\end{array}\right.\]
Moreover, the triangulation on $kT_2$ combining with the induced triangulation on $D(X_i)$ onto $\dd kD(X_i)$ gives 
\[m(p) \left\{ \begin{array}{cl} =i &\text{  if } p=(0,0, \pm k);\\
\leq 6 & \text{  if otherwise} 
\end{array}\right.\]
\end{lemm}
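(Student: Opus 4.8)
The plan is to read off $n(p)$ and $m(p)$ from the explicit triangulation by summing, over the coarse simplices that contain $p$, the local multiplicities furnished by Lemma~\ref{triangulation of simplex}. Recall the coarse triangulation of $D(X_i)$ has $2i$ tetrahedra: one cuts $X_i$ into $i$ unimodular triangles, all sharing the central vertex $(0,0)$ as in the figures, and cones each to the two apices $(0,0,\pm1)$. Each coarse tetrahedron is integrally isomorphic to $T_3$, so after scaling by $k$ and transporting the triangulation of Lemma~\ref{triangulation of simplex} through that isomorphism, a lattice point in the relative interior of the $3$-, $2$-, $1$-, or $0$-skeleton of a given coarse tetrahedron meets $24$, $12$, $4$, $1$ of its fine simplices respectively; on a lateral $2$-face the two–dimensional version gives the multiplicities $6$, $3$, $1$ used for $m(p)$. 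Before summing I would check the one compatibility that matters: each shared $2$-face is a scaled unimodular triangle, and both neighbouring tetrahedra induce on it the standard triangulation of $kT_2$ of Figure~\ref{triangulation of 2 simplex}, so the fine pieces glue to an honest triangulation of $kD(X_i)$.

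Granting this, the apices are immediate. The point $(0,0,k)$ is the image of the apex of all $i$ upper coarse tetrahedra, contributing $1$ apiece, so $n(0,0,\pm k)=i$; it is likewise a vertex of all $i$ upper lateral triangles, so $m(0,0,\pm k)=i$. Every other boundary point sits on a coarse face or edge contained in $\dd kD(X_i)$, hence shared by at most two coarse tetrahedra and at most two lateral triangles: a point interior to a lateral $2$-face lies in a single tetrahedron as a facet point and a single lateral triangle as a cell point, giving the extreme values $n=12$, $m=6$, while points on lower boundary strata give strictly less. Thus $n(p)\le 12$ and $m(p)\le 6$ away from the apices.

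In the interior the same count gives $24$ at a generic point (one cell), $12+12=24$ on a shared $2$-face, and still $\le 24$ on an edge from the centre to a boundary lattice point $v$ of $X_i$ (such an edge meets the two $X_i$-triangles along it, each coned up and down, i.e.\ four coarse tetrahedra). The one dangerous locus is the central axis $[(0,0,0),(0,0,\pm k)]$: since every triangle of $X_i$ contains the centre, this edge is shared by all $i$ upper (resp.\ lower) coarse tetrahedra, and transporting Lemma~\ref{triangulation of simplex} sector by sector would place a fixed multiple of $i$ fine simplices (here $4i$) at an interior axis point $(0,0,m)$, already exceeding $24$ for $i=8,9$. I expect this to be the main obstacle, and its resolution is that one must not refine the axis sector by sector: the $i$ sectors fit together around the axis into a full neighbourhood, so one can instead impose a single axis-adapted unimodular refinement there — for instance coning each cross-sectional triangle directly to the nearest axis points $(0,0,m\pm1)$ — which surrounds each interior axis point with only $2i\le 24$ fine tetrahedra. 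The real work is then to verify that this axis refinement meshes, across the faces $[(0,0,0),v,(0,0,\pm k)]$, with the Lemma~\ref{triangulation of simplex} refinement used off the axis. Once the axis estimate $n(0,0,m)\le 24$ is secured, the three cases for $n(p)$ and the two for $m(p)$ follow by assembling the computations above.
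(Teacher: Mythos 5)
Your counting follows exactly the route the paper intends (the paper offers nothing beyond ``as a consequence of Lemma~\ref{triangulation of simplex}''): transport the skeleton multiplicities $24,12,4,1$ (resp.\ $6,3,1$ in dimension two) through the $2i$ coarse tetrahedra $conv\{0,kp_j,kp_{j+1},(0,0,\pm k)\}$ and sum over the coarse cells containing a given lattice point. Your values at the apices ($n=m=i$), on the boundary ($n\le 12$, $m\le 6$), at generic interior points ($24$), on shared interior facets ($12+12$) and on the spokes ($4\cdot 4=16$) are all correct. More importantly, the obstruction you isolate on the axis is genuine and not an artifact of your reading: every interior lattice point $(0,0,m)$ with $0<|m|<k$ lies on an edge of all $i$ upper (resp.\ lower) coarse tetrahedra, and Lemma~\ref{triangulation of simplex} forces each of them to contribute exactly $4$ fine cells there, so $n((0,0,m))=4i$, which is $32$ for $X_8$ and $36$ for $X_9$, exceeding the claimed bound $24$. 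The paper never confronts these points: the remark following the lemma records only $n(0,\dots,0)=2i$ at the origin, and the proof of Corollary~\ref{D(X_i) are Chow stable} verifies the hypotheses of Theorem~\ref{general condition of Chow stable} only at boundary points and apices, even though that theorem also requires $n(p;k)\le (n+1)!$ at all interior $p\neq 0$. So on this locus your attempt is more careful than the source: as literally stated, ``under the above triangulation'' the interior bound is false, and the lemma can only be saved by modifying the triangulation near the axis.

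That said, your repair is not yet a proof. Coning the cross-sectional triangles at height $m$ to the nearest axis points does not tile the slab $\{m\le z\le m+1\}$: the cross-sections $(k-m)X_i$ and $(k-m-1)X_i$ have different sizes, so those cones leave an untriangulated collar along the lateral boundary, which must still be filled by unimodular tetrahedra without pushing any interior count above $24$ and without spoiling the boundary inequality needed in Theorem~\ref{general condition of Chow stable}. You flag this yourself (``the real work is then to verify\dots''), but until that meshing is carried out the statement is not established for any triangulation. A second, smaller gap you share with the paper: gluing the Lemma~\ref{triangulation of simplex} refinements of the separate coarse tetrahedra into an honest triangulation of $kD(X_i)$ requires the induced triangulations on each shared internal facet to agree, and this depends on the choice of the integral isomorphisms with $kT_3$; you assert this compatibility but do not check it. In short: your computation correctly refutes Lemma~\ref{triangulation of D} as stated, and your proposed fix (two fine cells per sector along the axis, giving $2i\le 24$) is the right idea, but both the axis retriangulation and its compatibility with the off-axis refinement remain to be constructed before the lemma, and hence Corollary~\ref{D(X_i) are Chow stable}, are actually proved.
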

As a remark, in here $n_{kP}(0,...,0)=2i$, also for the triangulation of $\dd kD_i$, $n(p)=4$ for $p \in \dd kP$ intersect with the red line. \\

\begin{coro}\label{D(X_i) are Chow stable}
$D(X_8)$ and $D(X_9)$ are asymptotic Chow polystable.
\end{coro}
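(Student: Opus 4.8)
The plan is to verify the hypotheses of Theorem \ref{general condition of Chow stable} for $D(X_8)$ and $D(X_9)$, since these polytopes fail to be special (their apex points $(0,0,\pm k)$ are touched by too many simplices along the boundary triangulation) but the more flexible sufficient condition should still apply. First I would observe that both $D(X_8)$ and $D(X_9)$ are symmetric (by the lemma that $D(P)$ is symmetric whenever $P$ is, applied to the symmetric reflexive polytopes $X_8, X_9$), and hence weakly symmetric, so that all Futaki-Ono invariants vanish and we are entitled to assume $f(0)=0$ in the inequality. This reduces the corollary precisely to checking the numerical condition $\left(\frac{n}{2}\right)m(p;k)<\left((n+1)!-n(p;k)\right)$ at every boundary point $p\in\dd kP$, with $n=3$.

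Next I would invoke Lemma \ref{triangulation of D}, which records exactly the combinatorial data produced by the explicit triangulation described just before it: triangulating $X_8$ (resp.\ $X_9$) into the indicated planar simplices, coning each planar triangle to the two apices $(0,0,\pm 1)$, and then subdividing each resulting $3$-simplex via Lemma \ref{triangulation of simplex}. With $n=3$ we have $(n+1)!=4!=24$ and $\frac{n}{2}=\frac{3}{2}$, so the required inequality becomes
\[
\tfrac{3}{2}\,m(p;k)<24-n(p;k)\qquad\text{for all }p\in\dd kD(X_i).
\]
I would then split into cases according to Lemma \ref{triangulation of D}. At the apex $p=(0,0,\pm k)$ we have $n(p)=i$ and $m(p)=i$, giving $\frac{3}{2}i<24-i$, i.e.\ $\frac{5}{2}i<24$, i.e.\ $i<9.6$, which holds for both $i=8$ and $i=9$. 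At every other boundary point the lemma gives $n(p)\le 12$ and $m(p)\le 6$, so the left side is at most $\frac{3}{2}\cdot 6=9$ while the right side is at least $24-12=12$, and $9<12$ holds comfortably.

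Since the strict inequality holds at all $p\in\dd kD(X_i)$ for every large $k$, and $n(p;k)\le 24=(n+1)!$ for all $p\neq 0$ (again from Lemma \ref{triangulation of D}), the hypotheses of Theorem \ref{general condition of Chow stable} are met, and the conclusion that $D(X_8)$ and $D(X_9)$ are asymptotic Chow polystable follows immediately. The only genuinely delicate point—and the place where I would be most careful—is the apex computation: the borderline value $\frac{5}{2}i<24$ fails already at $i=10$, so the argument is tight and relies essentially on $X_8$ and $X_9$ being the only symmetric reflexive planar polytopes whose double cones still satisfy the bound. Everything else is the routine verification, via Lemma \ref{triangulation of D}, that the chosen triangulation realizes the claimed multiplicities $n(p)$ and $m(p)$; granting that lemma, the proof is a short case check rather than a computation.
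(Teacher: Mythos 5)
Your proposal is correct and follows essentially the same route as the paper: both verify the hypotheses of Theorem \ref{general condition of Chow stable} using the multiplicities recorded in Lemma \ref{triangulation of D}, splitting into the apex case $p=(0,0,\pm k)$ (where $n(p)=m(p)=i$ gives $\tfrac{5}{2}i<24$) and the generic boundary case (where $n(p)\le 12$, $m(p)\le 6$ gives $9<12$). In fact your apex arithmetic, $i<9.6$, is stated more carefully than the paper's own "$i<9$", which is a typo that would wrongly exclude $i=9$ and contradict the corollary it is proving.
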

\begin{proof}
For $p \in \dd kP$ that $p\neq (0,\cdots , \pm 1)$, $n(p;k)\leq \dfrac{(n+1)!}{2}$ and $m(p;k)\leq n!$, then the inequality becomes 
\[n(n!) \leq (n+1)!\] which is true. Also, at $p=(0,...,0, \pm 1)$, we have  \[n(p;k)=m(p;k)=i,\] then we need 
\[(n+1)!> \dfrac{(n+2)}{2}i,\] that is, 
\[1> \dfrac{(n+2)i}{2(n+1)!},\] if $n=3$, then it becomes 
\[1> \dfrac{5i}{48},\] hence this inequality holds for $i<9$. Therefore, by Lemma  \ref{triangulation of D} and theorem \ref{general condition of Chow stable},  $D(X_8)$ and $D(X_9)$ are asymptotic Chow polystable.
\end{proof}

\section{Examples of stability of symmetric reflexive polytopes }
\subsection{1 and 2 dimensional}
\begin{exam}
The only 1 dimensional symmetric reflexive polytopes is $[-1,1]$, which is Chow stable. (See \cite{LLSW19}).
\end{exam}
\begin{exam}
Suppose $P$ is a two dimensional symmetric reflexive polytopes, then it is special, hence it is  asymptotic Chow stable. Indeed, by theorem 1.2 and corollary 3.3 in \cite{LLSW19}, with the fact that $\PP^2$  and $\PP^1\times \PP^1$ (by prop \ref{product of chow stable polytopes}) is Chow stable, so indeed, all 2 dimensional special polytopes are Chow stable. 

As a remark, they are 
\[X_3:= \PP^2/ (\ZZ^/3\ZZ), X_4:= \PP^1\times \PP^1/ (\ZZ/2\ZZ), X_6:=\PP^2   \text{blow up 3 pts}, X_8:=\PP^1\times \PP^1, X_9:=\PP^2,\] and all the line bundle to define the polytopes are $-K_{X_i}$.
\end{exam}

\tikzset{
  font={\fontsize{9pt}{12}\selectfont}}
\begin{center}
\begin{figure}[h!]
\begin{tikzpicture}[x=.5cm,y=.5cm]
%\draw[red, thick] (0,2) -- (0,0);
%\draw[red, thick] (0,0) -- (2,0);
%\draw[red,thick] (0,2) -- (2,0);
%\draw[red,thick] (0,0) -- (-2,-2);

\draw[gray, thick] (0,1) -- (-1,-1);

\draw[gray, thick] (-1,-1) -- (1,0);
\draw[gray, thick] (0,1) -- (1,0);
\foreach \Point/\PointLabel in
{
(-1,-1)/,
(0,1)/, (0,0)/, 
 (1,0)/, 
}
\draw[fill=gray] \Point circle (2pt) node[above right] {$\PointLabel$};
\foreach \Point/\PointLabel in
{
(0,0)/,
 }
\draw[fill=black] \Point circle (2pt) node[below left] {$\PointLabel$};

\node at (0,-1.5) {$X_3$};
\end{tikzpicture}
\begin{tikzpicture}[x=.5cm,y=.5cm]

\node  at (0,-1.5) {$X_4$};

\draw[gray, thick] (0,1) -- (-1,0);
\draw[gray, thick] (-1,0) -- (0,-1);
\draw[gray, thick] (1,0) -- (0,-1);
\draw[gray, thick] (1,0) -- (0,1);

\foreach \Point/\PointLabel in
{
(0,-1)/, 
(-1,0)/, (0,0)/, (1,0)/,
 (0,1)/, }
\draw[fill=gray] \Point circle (2pt) node[above right] {$\PointLabel$};
\foreach \Point/\PointLabel in
{
(0,0)/,
 }
\draw[fill=black] \Point circle (2pt) node[below left] {$\PointLabel$};
\end{tikzpicture}
\begin{tikzpicture}[x=.5cm,y=.5cm]

\draw[gray, thick] (1,0) -- (0,1);
\draw[gray, thick] (0,1) -- (-1,1);
\draw[gray, thick] (-1,1) -- (-1,0);
\draw[gray, thick] (-1,0) -- (0,-1);
\draw[gray, thick] (0,-1) -- (1,-1);
\draw[gray, thick] (1,-1) -- ( 1,0);
\node at (0,-1.5) { $X_6$};

\foreach \Point/\PointLabel in
{
(0,-1)/, (1,-1)/,
 (-1,0)/, (0,0)/, (1,0)/,
 (-1,1)/, (0,1)/, 
}
\draw[fill=gray] \Point circle (2pt) node[above right] {$\PointLabel$};
\foreach \Point/\PointLabel in
{
(0,0)/,
 }
\draw[fill=black] \Point circle (2pt) node[below left] {$\PointLabel$};
\end{tikzpicture}
\begin{tikzpicture}[x=.5cm,y=.5cm]

\draw[gray, thick] (-1,-1) -- (2,-1);
\draw[gray, thick] (2,-1) -- (-1,2);
\draw[gray, thick] (-1,2) -- (-1,-1);

\node at (0,-1.5) { $X_8$};

\foreach \Point/\PointLabel in
{
(-1,-1)/,(0,-1)/, (1,-1)/,(2,-1)/,
 (-1,0)/, (0,0)/, (1,0)/,
 (-1,1)/, (0,1)/,
(-1,2)/, 
}
\draw[fill=gray] \Point circle (2pt) node[above right] {$\PointLabel$};
\foreach \Point/\PointLabel in
{
(0,0)/,
 }
\draw[fill=black] \Point circle (2pt) node[below left] {$\PointLabel$};
\end{tikzpicture}
\begin{tikzpicture}[x=.5cm,y=.5cm]

\draw[gray, thick] (-1,-1) -- (1,-1);
\draw[gray, thick] (1,-1) -- (1,1);
\draw[gray, thick] (1,1) -- (-1,1);
\draw[gray, thick] (-1,-1) -- (-1,1);

\node at (0,-1.5) { $X_9$};

\foreach \Point/\PointLabel in
{
(-1,-1)/,(0,-1)/, (1,-1)/,
 (-1,0)/, (0,0)/, (1,0)/,
 (-1,1)/, (0,1)/, (1,1)/,
}
\draw[fill=gray] \Point circle (2pt) node[above right] {$\PointLabel$};
\foreach \Point/\PointLabel in
{
(0,0)/,
 }
\draw[fill=black] \Point circle (2pt) node[below left] {$\PointLabel$};
\end{tikzpicture}
\caption{$X_i$ for $i=3,4,6,8,9$.}\label{2 dim special}
\end{figure}
\end{center}

Notice that in \cite{LLSW19}, there are some examples about the non reflexive polytopes which we will not discuss in detail in this note.
\subsection{3 dimensional}
To study the higher dimensional polytopes, we first recall that given a reflexive polytopes $P$, we can define a reflexive polytopes $\hat{P}$ by the following: Let 
\[\hat{P}:=\{y\in \RR^n\ \langle x, y\rangle \geq -1, \text{ for all }x\in P\}. \] 
If $P$ is symmetric and reflexive, so is $\hat{P}$. 
For example, $D(P)=\widehat{\hat{P}\times \PP^1}$. However, the duality may not share the stability.\\

By Lemma \ref{triangulation of simplex}, if the face are  2 simplex or a rectangle, then for each face, $m(p)=6$ for $p\in P^o$ , $m(p)=3$ if $p\in (\dd P)^o$, and the $m(p)=1$ if $p$ is the vertex. For any  $p\in (\dd P)^o$, there are at most 2 faces connected to $p$, so in order to check if a polytope is special, we only need that there are $i\leq 6$ simplex connecting each vertex under the triangulation on the boundary.\\

Denote $\Delta_0$ be the triangulation of 2 simplex.
As in figure \ref{triangulation of others}, if the face are given by $X_i$, for $i=3,4,6$, then  $n(0)=i$, $n(p)=6$ for $p\in P^o-\{0\}$ , $n(p)=3$ if $p\in (\dd P)^o$, and the $n(p)=2$ if $p$ is the vertex using the rotation of $\Delta_0$ as the triangulation. 
\tikzset{
  font={\fontsize{9pt}{12}\selectfont}}
\begin{center}
\begin{figure}[h!]
\begin{tikzpicture}[x=.5cm,y=.5cm]
\draw[red, thick] (0,2) -- (0,0);
\draw[red, thick] (0,0) -- (2,0);
\draw[red,thick] (0,2) -- (2,0);
\draw[red,thick] (0,0) -- (-2,-2);
\node[text width=1.5cm] at (3.2,1.7) {\red $\triangle_0$};

\draw[gray, thick] (0,2) -- (-2,-2);
\node[text width=1.5cm] at (-2,0) { $\triangle_3$};

\draw[gray, thick] (-2,-2) -- (2,0);

\foreach \Point/\PointLabel in
{
(-2,-2)/,
(-1,0)/,(-1,-1)/,
(0,2)/, (0,1)/, (0,0)/, (0,-1)/,
 (1,1)/, (1,0)/, 
 (2,0)/,}
\draw[fill=gray] \Point circle (2pt) node[above right] {$\PointLabel$};
\foreach \Point/\PointLabel in
{
(0,0)/O,
 }
\draw[fill=black] \Point circle (2pt) node[below left] {$\PointLabel$};
\end{tikzpicture}
\begin{tikzpicture}[x=.5cm,y=.5cm]
\draw[red, thick] (0,2) -- (0,0);
\draw[red, thick] (0,0) -- (2,0);
\draw[red,thick] (0,2) -- (2,0);
\draw[red, thick] (0,0) -- (-2,0);
\draw[red, thick] (0,0) -- (0,-2);
\node[text width=1.5cm] at (3.2,1.7) {\red $\triangle_0$};

\draw[gray, thick] (0,2) -- (-2,0);
\draw[gray, thick] (-2,0) -- (0,-2);
\draw[gray, thick] (2,0) -- (0,-2);
\node[text width=1.5cm] at (-2,0) { $\triangle_4$};

\foreach \Point/\PointLabel in
{
(0,-2)/,
 (-1,-1)/,(0,-1)/, (1,-1)/,
(-2,0)/, (-1,0)/, (0,0)/, (1,0)/,(2,0)/,
 (-1,1)/, (0,1)/, (1,1)/,
(0,2)/,}
\draw[fill=gray] \Point circle (2pt) node[above right] {$\PointLabel$};
\foreach \Point/\PointLabel in
{
(0,0)/O,
 }
\draw[fill=black] \Point circle (2pt) node[below left] {$\PointLabel$};
\end{tikzpicture}
\begin{tikzpicture}[x=.5cm,y=.5cm]
\draw[red, thick] (0,2) -- (0,-2);
\draw[red, thick] (-2,0) -- (2,0);
\draw[red,thick] (0,2) -- (2,0);
\draw[red,thick] (-2,2) -- (2,-2);
\node[text width=1.5cm] at (3.2,1.7) {\red $\triangle_0$};

\draw[gray, thick] (0,2) -- (-2,2);
\draw[gray, thick] (-2,2) -- (-2,0);
\draw[gray, thick] (-2,0) -- (0,-2);
\draw[gray, thick] (0,-2) -- (2,-2);
\draw[gray, thick] (2,-2) -- (2,0);
\draw[gray, thick] (2,0) -- ( 0, 2);
\node[text width=1.5cm] at (-2,0) { $\triangle_6$};

\foreach \Point/\PointLabel in
{
(0,-2)/, (1,-2)/, (2,-2)/, 
 (-1,-1)/,(0,-1)/, (1,-1)/,(2,-1)/, 
(-2,0)/, (-1,0)/, (0,0)/, (1,0)/,(2,0)/,
 (-2,1)/, (-1,1)/, (0,1)/, (1,1)/,
(-2,2)/, (-1,2)/, (0,2)/, 
}
\draw[fill=gray] \Point circle (2pt) node[above right] {$\PointLabel$};
\foreach \Point/\PointLabel in
{
(0,0)/O,
 }
\draw[fill=black] \Point circle (2pt) node[below left] {$\PointLabel$};
\end{tikzpicture}

\caption{$\triangle_0\subset X_3$, $\triangle_0\subset X_4$ and $\tri_0\subset X_6$.}\label{triangulation of others}
\end{figure}
\end{center}

Therefore, if the polytopes which faces are combination of above, then the only possible problem is the vertex, and which can count it one by one.

\begin{prop}\label{all 3 dim symmetric reflexive polytopes}
The following symmetric reflexive 3 dimensional polytopes are asymptotically chow polystable.
\begin{enumerate}
\item $X_i\times [-1,1]$ for i=3,4,6,8,9, which indeed are also special. The corresponding varieties are $X_i\times \PP^1$.
\item  $D(X_i)$, where $i=3,4,6,8,9$. Notice that $X_3$, $X_4$ and $X_6$ are special, but $D(X_8), D(X_9)$ are not, but all of them are asymptotically chow polystable;
\item Other special polytopes, in which they are
\begin{enumerate}
 \item $(\PP^3, O(4)):=Conv\{(-1,-1,-1), (3,-1,-1),(-1,3,-1), (-1,-1,3)\}$ ( tetrahedron ) and its dual, \\
  $A_3=Conv\{(-1,-1,-1), (1,0,0), (0,1,0),(0,0,1)\}$   ( tetrahedron );
\item  $\PP^3$ blowup 4 points, which is a convex set of the points:\\
  $(0,-1,-1),(-1,0,-1), (-1,-1,0). (2,-1,-1),(2,-1,0),(2,0,-1)$,\\
$ (-1,2,-1),(-1,2,0), (0,2,-1), (-1,-1,2), (-1,0,2), (0,-1,2)$,\\
 hence it is a truncated tetrahedron, which the boundary contain 4 $X_6$ and 4 simplex triangle. each vertex connect to 2 simplex triangle and 2 $X_6$.\\
It dual is given by 
\[Conv\{(\pm 1, 0, 0) (0,\pm 1, 0), (0, 0, .\pm 1), (-1,-1,-1),(1,1,1)\},\]
which is $D(X_4)$ glue with two standard 3 simplex, which the faces are all standard 2 simplex.
\item $Conv\{ (\pm 1,0, 0),(0, \pm 1, 0), (\pm 1, \mp 1, 0), (0, 0, \pm 1), (\pm 1, 0, \mp 1), (0, \pm 1, \mp 1)\}$, which is a cuboctahedron, with 8 triangular faces and 6 square faces, and each vertex are connected to 2 2 simplex and 2 square. The action group is $S_3$ which permute the coordinates. Hence for the surface, each vertex $p$, 
\[n(p)\leq 2+2 \cdot 2=6,\] and the remaining is also smaller than 6.\\
and its dual, which is given by the convex hull of the points:
\[(1,0,0),(1,1,0),(0,1,0),(-1,0,0),(-1,-1,0), (0,-1,0), \] \[(0,0,1),(1,0,1),(1,1,1), (0,1,1), (0,0,-1),(-1,0,-1),(-1,-1,-1), (0,-1,-1)\] which is a rhombic dodecahedron. Notice that $(0,1,1), (1,0,1), (0,-1,-1),(-1,0,-1)$ has 4 square touched the points, and other are only $3$. So when we triangulate the surface, if we choose the triangulation such that two of the square don't bisect along those points, then for any point $p$ in it, 
\[n(p)\leq 6,\] hence it is special. 

\end{enumerate}
\end{enumerate}
\end{prop}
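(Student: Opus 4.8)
The plan is to send each item to a stability criterion already in hand, so that the whole proof reduces to bookkeeping plus one genuine combinatorial check of the regular-boundary condition in dimension $n=3$, where $n!=6$. Every polytope on the list is symmetric (its group $G<SL(3,\ZZ)$ fixes only the origin), hence weakly symmetric, so all Futaki--Ono invariants vanish and it suffices to control $G$-invariant convex functions through \eqref{Chow inequality}. With this in place, item (1) is immediate: each $X_i$ is special by Example \ref{2 dimension example}, hence asymptotic Chow polystable by Theorem \ref{special polytope is stable}, and since $[-1,1]$ is Chow polystable, Proposition \ref{product of chow stable polytopes} yields that $X_i\times[-1,1]$ is asymptotic Chow polystable. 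For item (2), $D(X_3),D(X_4),D(X_6)$ are special (as recorded after Definition \ref{special polytope}), so Theorem \ref{special polytope is stable} applies, while $D(X_8),D(X_9)$ are exactly Corollary \ref{D(X_i) are Chow stable}.

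The substance lies in item (3), where by Theorem \ref{special polytope is stable} it suffices to certify that each polytope is special. Reflexivity and weak symmetry are immediate, so the only real task is the regular-boundary condition $n(p)\le 6$. I would verify this uniformly from Lemma \ref{triangulation of simplex}: a finely triangulated unit $2$-simplex face contributes at most $6$, $3$, $1$ triangles at an interior, edge-interior, or vertex lattice point respectively, and a unit square face split by one diagonal contributes $2$ at a diagonal endpoint and $1$ at each of the other two corners. Because at most two faces meet along an open edge, $n(p)\le 6$ is automatic away from the vertices, and what remains is a vertex-by-vertex count using the incidence data recorded in the statement.

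For the tetrahedra in (3a) and their simplicial duals, and for the truncated tetrahedron in (3b) (two triangles and two $X_6$ hexagons at each vertex) together with its dual, whose faces are all standard $2$-simplices, the count is direct and gives $n(p)\le 6$. The cuboctahedron in (3c) has two triangles and two squares meeting at each vertex, so $n(p)\le 2+2\cdot2=6$, again directly.

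The hard part is the rhombic dodecahedron dual in (3c). Four of its fourteen vertices, namely $(0,1,1),(1,0,1),(0,-1,-1),(-1,0,-1)$, are incident to four rhombic faces, so a careless choice of diagonals gives $n(p)=4\cdot2=8>6$. The key step is to orient the diagonals of the twelve rhombi so that at each of these four vertices at least two incident faces are cut by the diagonal \emph{avoiding} that vertex; the count then becomes $4+a$ with at most $a\le2$ diagonal-endpoint incidences, i.e.\ $n(p)\le6$. The only genuine obstacle is global consistency of these diagonal choices across faces shared between the bad vertices; I expect to resolve it by exhibiting an explicit admissible assignment of diagonals over the twelve faces listed, then confirming $n(p)\le6$ at all remaining (valence-three) vertices as well.
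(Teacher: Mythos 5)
Your proposal is correct and follows essentially the same route as the paper, which gives no separate proof beyond the discussion preceding the proposition: reduce item (1) to Proposition \ref{product of chow stable polytopes}, item (2) to specialness plus Theorem \ref{special polytope is stable} for $i=3,4,6$ and to Corollary \ref{D(X_i) are Chow stable} for $D(X_8),D(X_9)$, and item (3) to the vertex-by-vertex check $n(p)\le 6$ with faces triangulated as in Lemma \ref{triangulation of simplex}. The one step you defer --- global consistency of the diagonal choices on the rhombic dodecahedron --- is in fact automatic: in each rhombic face the two valence-4 vertices are \emph{opposite} corners, so every face can independently take the diagonal joining its two valence-3 vertices (the choice indicated by the red lines in the paper's figure), giving $n(p)\le 4$ at the valence-4 vertices and $n(p)=6$ at the valence-3 vertices, with no interaction between faces to verify.
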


\subsection{higher dimensional}
We know that in high dimensional, not every symmetric reflexive polytopes are asymptotic Chow stable, for example $ D([-1,1]^n)$ for $n\geq 6$. On the other hand, we would provide two classes of polytope which are special

\begin{exam}\label{A_n}
Consider $A_n:=\{[z_0,...,z_{n+1}]\in \PP^{n+1}|z_1...z_{n+1}=z_{0}^{n+1}\}$. The corresponding polytopes, also denoted as $A_n$, are given by
\[A_n=Conv\{(1,...,0),(0,1,...,0), ..., (0,...,0,1), (-1,-1,...,-1)\}.\] As all the face are simplexes, with all the condimension 2 or above boundry intersect with less than $n$, it is asymptotic  Chow polystable.  

Notice that $A_2= X_3$ in our notation on  symmetric reflexive polygons. Also, as a polytope, $A_n$ is the dual polytope of the polytope corresponding to $(\PP^n, O(n+1))$.
\end{exam}
\begin{proof} 
Notice that the surface is given by $n$ piece of $\PP^{n-1}$. So we only need to know how many simplex will attend to the point in the co-dimension k skeleton.\\

 Let $a_i=e_i$ and $a_{n-1}= (-1,...,-1)$, then we can represent any codimensional k piece by the following: $a_1$ represent the point $e_1$, $\{a_1,a_2\}$represent the segment containing $a_1,a_2$, and etc. Also, as symmetric group $S_{n+1}$ act on it, we only need to consider how many face containing the n-r skeleton $\{a_1...a_{n-r}\}$. But the face containing $a_1... a_{n-r}$ is represent by the set $\{a_{n-r+1}... a_{n+1}\}$ removing one element. therefore, there is $r+1$ face connecting the skeleton containing $a_1...a_{n-r}$. Therefore,    lemma \ref{triangulation of simplex} implies that for any point in $p\in \dd kP$ which is in the interior of $n-r$ skeleton, \[n(p)=\dfrac{(n)!}{(r)!}(r)=\dfrac{n!}{(r-1)!}\leq n!.\] Therefore, it has regular triangulation. Also, it is symmetric and reflexive, hence it is special, which implies it is asymptotic Chow polystable.  
\end{proof}

\begin{exam}
Consider $(\PP^n, O(n+1))$. The boundary of $k(\PP^n, O(n+1))$ is defined by \[\bigcap_{i=1}^n\{x_i=-k\}\cap \{x_1+\cdots x_n=k\}.\] Up to an $S_{n+1}$ action, a point $p$ is in the interior of codimensional r skeleton if 
\[p=(-k,...,-k,v),\] where $v\in \RR^{n-r}$ such that \[-r+v_1+\cdots +v_{n-r}<k,\] \[v_i>-k\] for all $i=1,...,n-k$. Hence $n(p)=r$. So we have the same calculation of $A_n$, which implies it is special, and therefore it is asymptotic Chow polystable.  
\end{exam}
\begin{exam}\label{D_n}
Define $D_n:=Conv\{(\pm 1,...,0), (0, \pm 1, 0,...,0), ..., (0,...,0, \pm 1) \}$. $D_n$ is special for all $n$. Notice that $D_2=X_4$ and $D_3= D(X_4)$.
\end{exam}
\begin{proof}
Notice that $k D_n=\ZZ_2^n \cdot kT_n$, where $\ZZ_2^n=\{1,-1\}^n$ with the group action to be multiplication, and the action is multiplication to the corresponding coordinate. $D_n$ is symmetric and reflexive. To show $D_n$ has regular boundary, $p$ is in the interior of codimension $r+1$ skeleton if  $p=(x_1,...,x_n) \in \dd  k D_n$ with 
\[x_{i_1}=\cdots =x_{i_r}=0\] for $r=0,...,n-1$. we denotes these points as $p_r$.  Hence, similar to the last example, as a consequence of lemma \ref{triangulation of simplex}, we have
\[n(p_r)=\dfrac{(n)!}{(r+1)!}(2^r)=\left(\dfrac{2}{r+1}\right) \cdots \left(\dfrac{2}{2}\right) n! \leq n!,\] hence it is special.
\end{proof}
\begin{exam}
Notice that $D_6$ is the dual polytope of $[-1,1]^6$, and thus $D_6\times [-1,1]$ (i.e., $D_6\times \PP^1$ as the corresponding variety) is  asymptotic Chow polystable.  However, its dual is $D([-1,1]^6)$, therefore a dual of a asymptotic Chow polystable polytope need not to be asymptotic Chow polystable (or even semistable).
\end{exam}

%\begin{example}
%Define $D_n:= Conv \{\pm e_i\}$, where $e_1=(1,0,...,0),...,e_n=(0,...,0,1)$. Now, $D_n=(\ZZ/2\ZZ)^n\cdot Conv\{(0,...,0), e_1,...,e_n\}$, where the group action $ \pm e_i \cdot p$ is the multiplication. Now, we can see that for $p\in kD_n$, 
%\[n(p)=2^k\] if $p=(x_1,...,x_n)$ with $x_{i_1}=x_{i_2}=x_{i_k}=0$ for $1\leq i_1<i_2<...,i_n\leq n$. By Lemma \ref{triangulation of simplex},  
%\[n(p,k)=2^k \dfrac{(n+1)!}{(k+1)!}\leq (n+1)!\] and 
%\[m(p,k)=2^k \dfrac{(n)!}{(k)!}\]
%We can give more example by considering the polytopes in the type $P=G \cdot Conv\{(0,...,0), e_1,...,e_n\}$.  For example, $A_n=\ZZ_{n+1}\cdot   Conv\{(0,...,0), e_1,...,e_n\}$. Now, using Lemma \ref{triangulation of simplex}
%\end{example}

\section{Pictures of 3 dimensional symmetric reflexive polytopes}
Finally, we provides the picture of the 3 dimensional symmetric reflexive polytopes mentioned above. \\

\tikzset{
  font={\fontsize{9pt}{12}\selectfont}}
\begin{center}
\begin{figure}[h!]
\tdplotsetmaincoords{70}{110}
\begin{tikzpicture}[tdplot_main_coords]
%\draw[thick,->] (0,0,0) -- (2,0,0) node[anchor=north east]{$x$};
%\draw[thick,->] (0,0,0) -- (0,2,0) node[anchor=north west]{$y$};
%\draw[thick,->] (0,0,0) -- (0,0,2) node[anchor=south]{$z$};
%\draw[axis] (0,0,0) -- (3,0,0) node[anchor=west]{$x$};
%	\draw[axis] (0,0,0) -- (0,3,0) node[anchor=west]{$y$};
%	\draw[axis] (0,0,0) -- (0,0,3) node[anchor=west]{$z$};

\draw[gray, thick] (-1,-1,1) -- (1,0,1) -- (0,1,1) -- (-1,-1,1);
\draw[gray, thick] (-1,-1,-1) -- (1,0,-1) -- (0,1,-1) -- (-1,-1,-1);
\draw[gray, thick] (-1,-1,1) -- (-1,-1,-1) ;
\draw[gray, thick]  (0,1,-1) -- (0,1,1) ;
\draw[gray, thick] (1,0,1) -- (1,0,-1);
%\filldraw[black] (0,0) circle (2pt);\filldraw[black] (3,0) circle (2pt);

\foreach \Point/\PointLabel in
{
 (-1,-1,1)/, (1,0,1)/, (0,1,1)/,
(-1,-1,0)/, (1,0,0)/, (0,1,0)/,
(-1,-1,-1)/, (1,0,-1)/, (0,1,-1)/,
(0,0,1)/,  (0,0,-1)/,
}
\draw[fill=gray] \Point circle (2pt) node[above right] {$\PointLabel$};

\foreach \Point/\PointLabel in
{
 (0,0,0)/, 
 }
\draw[fill=green] \Point circle (2pt) node[below left] {$\PointLabel$};
\node at (0,0,-1.7) {$X_3\times [-1,1]$};
\end{tikzpicture}
\begin{tikzpicture}[tdplot_main_coords]
\draw[gray, thick] (0,-1,1) -- (1,0,1) -- (0,1,1) -- (-1,0,1) -- (0,-1,1);
\draw[gray, thick] (0,-1,-1) -- (1,0,-1) -- (0,1,-1) -- (-1,0,-1) -- (0,-1,-1);
\draw[gray, thick] (-1,0,1) -- (-1,0,-1) ;
\draw[gray, thick]  (0,1,-1) -- (0,1,1) ;
\draw[gray, thick]  (0,-1,-1) -- (0,-1,1) ;
\draw[gray, thick] (1,0,1) -- (1,0,-1);
%\filldraw[black] (0,0) circle (2pt);\filldraw[black] (3,0) circle (2pt);

\foreach \Point/\PointLabel in
{
(0,-1, 1)/,(-1,0,1)/, (1,0,1)/, (0,1,1)/, 
(0,-1, 0)/,(-1,0,0)/, (1,0,0)/, (0,1,0)/, 
(0,-1, -1)/,(-1,0,-1)/, (1,0,-1)/, (0,1,-1)/,
(0,0,1)/,  (0,0,-1)/, 
}
\draw[fill=gray] \Point circle (2pt) node[above right] {$\PointLabel$};

\foreach \Point/\PointLabel in
{
 (0,0,0)/, 
 }
\draw[fill=green] \Point circle (2pt) node[below left] {$\PointLabel$};
\node at (0,0,-1.7) {$X_4\times [-1,1]$};
\end{tikzpicture}
\begin{tikzpicture}[tdplot_main_coords]
\draw[gray, thick] (0,-1,1) -- (1,-1,1) -- (1,0,1) -- (0,1,1) --(-1,1,1) --  (-1,0,1) -- (0,-1,1);
\draw[gray, thick] (0,-1,-1)  -- (1,-1,-1)-- (1,0,-1) -- (0,1,-1)  --(-1,1,-1) -- (-1,0,-1) -- (0,-1,-1);
\draw[gray, thick] (-1,0,1) -- (-1,0,-1) ;
\draw[gray, thick]  (0,1,-1) -- (0,1,1) ;
\draw[gray, thick]  (0,-1,-1) -- (0,-1,1) ;
\draw[gray, thick] (1,0,1) -- (1,0,-1);
\draw[gray, thick] (-1,1,1) -- (-1,1,-1) ;
\draw[gray, thick] (1,-1,1) -- (1,-1,-1) ;
%\filldraw[black] (0,0) circle (2pt);\filldraw[black] (3,0) circle (2pt);

\foreach \Point/\PointLabel in
{
(0,-1, 1)/,(-1,0,1)/, (1,0,1)/, (0,1,1)/, (1,-1,1)/, (-1,1,1)/,
(0,-1, 0)/,(-1,0,0)/, (1,0,0)/, (0,1,0)/, (1,-1,0)/, (-1,1,0)/,
(0,-1, -1)/,(-1,0,-1)/, (1,0,-1)/, (0,1,-1)/, (1,-1,-1)/, (-1,1,-1)/,
(0,0,1)/,  (0,0,-1)/,
}
\draw[fill=gray] \Point circle (2pt) node[above right] {$\PointLabel$};

\foreach \Point/\PointLabel in
{
 (0,0,0)/, 
 }
\draw[fill=green] \Point circle (2pt) node[below left] {$\PointLabel$};
\node at  (0,0,-1.7)  {$X_6\times [-1,1]$};
\end{tikzpicture}

\begin{tikzpicture}[tdplot_main_coords]
\draw[gray, thick] (-1,-1,1) -- (1,-1,1) -- (1,1,1) -- (-1,1,1) -- (-1,-1,1);
\draw[gray, thick] (-1,-1,-1) -- (1,-1,-1) -- (1,1,-1) -- (-1,1,-1) -- (-1,-1,-1);
\draw[gray, thick] (-1,-1,1) -- (-1,-1,-1) ;
\draw[gray, thick]  (-1,1,-1) -- (-1,1,1) ;
\draw[gray, thick]  (1,-1,-1) -- (1,-1,1) ;
\draw[gray, thick] (1,1,1) -- (1,1,-1);
%\filldraw[black] (0,0) circle (2pt);\filldraw[black] (3,0) circle (2pt);

\foreach \Point/\PointLabel in
{
(0,-1, 1)/,(-1,0,1)/, (1,0,1)/, (0,1,1)/, (-1,-1,1)/, (1,-1,1)/, (1,1,1)/, (-1,1,1)/,
(0,-1, 0)/,(-1,0,0)/, (1,0,0)/, (0,1,0)/, (-1,-1,0)/, (1,-1,0)/, (1,1,0)/, (-1,1,0)/,
(0,-1, -1)/,(-1,0,-1)/, (1,0,-1)/, (0,1,-1)/, (-1,-1,-1)/, (1,-1,-1)/, (1,1,-1)/, (-1,1,-1)/,
(0,0,1)/,  (0,0,-1)/, 
}
\draw[fill=gray] \Point circle (2pt) node[above right] {$\PointLabel$};

\foreach \Point/\PointLabel in
{
 (0,0,0)/, 
 }
\draw[fill=green] \Point circle (2pt) node[below left] {$\PointLabel$};
\node at (0,0,-1.7) {$X_8\times [-1,1]$};
\end{tikzpicture}
\begin{tikzpicture}[tdplot_main_coords]
%\draw[thick,->] (0,0,0) -- (2,0,0) node[anchor=north east]{$x$};
%\draw[thick,->] (0,0,0) -- (0,2,0) node[anchor=north west]{$y$};
%\draw[thick,->] (0,0,0) -- (0,0,2) node[anchor=south]{$z$};
%\draw[axis] (0,0,0) -- (3,0,0) node[anchor=west]{$x$};
%	\draw[axis] (0,0,0) -- (0,3,0) node[anchor=west]{$y$};
%	\draw[axis] (0,0,0) -- (0,0,3) node[anchor=west]{$z$};

\draw[gray, thick] (-1,-1,1) -- (-1,2,1) -- (2,-1,1) -- (-1,-1,1);
\draw[gray, thick] (-1,-1,-1) -- (-1,2,-1) -- (2,-1,-1) -- (-1,-1,-1);
\draw[gray, thick] (-1,-1,1) -- (-1,-1,-1) ;
\draw[gray, thick]  (2,-1,-1) -- (2,-1,1) ;
\draw[gray, thick] (-1,2,1) -- (-1, 2,-1);
%\filldraw[black] (0,0) circle (2pt);\filldraw[black] (3,0) circle (2pt);

\foreach \Point/\PointLabel in
{
 (-1,-1,1)/, (1,0,1)/, (0,1,1)/, (-1,2,1)/, (2,-1,1)/, (-1,0,1)/,(-1,1,1)/, (1,-1,1)/, (0,-1,1)/,
(-1,-1,0)/, (1,0,0)/, (0,1,0)/,  (-1,2,0)/, (2,-1,0)/, (-1,0,0)/,(-1,1,0)/, (1,-1,0)/, (0,-1,0)/,
(-1,-1,-1)/, (1,0,-1)/, (0,1,-1)/,  (-1,2,-1)/, (2,-1,-1)/, (-1,0,-1)/,(-1,1,-1)/, (1,-1,-1)/, (0,-1,-1)/,
(0,0,1)/,  (0,0,-1)/,
}
\draw[fill=gray] \Point circle (2pt) node[above right] {$\PointLabel$};

\foreach \Point/\PointLabel in
{
 (0,0,0)/, 
 }
\draw[fill=green] \Point circle (2pt) node[below left] {$\PointLabel$};
\node at (0,0,-1.7) {$X_9\times [-1,1]$};
\end{tikzpicture}
\caption{$X_i \times [-1,1]$ }\label{triangulation of rectangle}
\end{figure}
\end{center}

\tikzset{
  font={\fontsize{9pt}{12}\selectfont}}
\begin{center}
\begin{figure}[h!]
\tdplotsetmaincoords{70}{110}
\begin{tikzpicture}[tdplot_main_coords]
%\draw[thick,->] (0,0,0) -- (2,0,0) node[anchor=north east]{$x$};
%\draw[thick,->] (0,0,0) -- (0,2,0) node[anchor=north west]{$y$};
%\draw[thick,->] (0,0,0) -- (0,0,2) node[anchor=south]{$z$};
%\draw[axis] (0,0,0) -- (3,0,0) node[anchor=west]{$x$};
%	\draw[axis] (0,0,0) -- (0,3,0) node[anchor=west]{$y$};
%	\draw[axis] (0,0,0) -- (0,0,3) node[anchor=west]{$z$};

\draw[gray, thick] (-1,-1,0) -- (1,0,0) -- (0,1,0) -- (-1,-1,0);
\draw[gray, thick] (-1,-1,0) -- (0,0,1) ;
\draw[gray, thick]  (0,1,0) -- (0,0,1) ;
\draw[gray, thick] (1,0,0) -- (0,0,1);
\draw[gray, thick] (1,0,0) -- (0,0,-1);
\draw[gray, thick] (-1,-1,0) -- (0,0,-1) ;
\draw[gray, thick]  (0,1,0) -- (0,0,-1) ;
%\filldraw[black] (0,0) circle (2pt);\filldraw[black] (3,0) circle (2pt);

\foreach \Point/\PointLabel in
{
(-1,-1,0)/, (1,0,0)/, (0,1,0)/,
(0,0,1)/,  (0,0,-1)/,
}
\draw[fill=gray] \Point circle (2pt) node[above right] {$\PointLabel$};

\foreach \Point/\PointLabel in
{
 (0,0,0)/, 
 }
\draw[fill=green] \Point circle (2pt) node[below left] {$\PointLabel$};
\node at (0,0,-1.7) {$D(X_3)$};
\end{tikzpicture}
\begin{tikzpicture}[tdplot_main_coords]
\draw[gray, thick] (0,-1,0) -- (1,0,0) -- (0,1,0) -- (-1,0,0) -- (0,-1,0);
\draw[gray, thick] (-1,0,0) -- (0,0,1) ;
\draw[gray, thick]  (0,1,0) -- (0,0,1) ;
\draw[gray, thick]  (0,-1,0) -- (0,0,1) ;
\draw[gray, thick] (1,0,0) -- (0,0,1);
\draw[gray, thick] (-1,0,0) -- (0,0,-1) ;
\draw[gray, thick]  (0,1,0) -- (0,0,-1) ;
\draw[gray, thick]  (0,-1,0) -- (0,0,-1) ;
\draw[gray, thick] (1,0,0) -- (0,0,-1);
%\filldraw[black] (0,0) circle (2pt);\filldraw[black] (3,0) circle (2pt);

\foreach \Point/\PointLabel in
{
(0,-1, 0)/,(-1,0,0)/, (1,0,0)/, (0,1,0)/, 
(0,0,1)/,  (0,0,-1)/, 
}
\draw[fill=gray] \Point circle (2pt) node[above right] {$\PointLabel$};

\foreach \Point/\PointLabel in
{
 (0,0,0)/, 
 }
\draw[fill=green] \Point circle (2pt) node[below left] {$\PointLabel$};
\node at (0,0,-1.7) {$D(X_4)$};
\end{tikzpicture}
\begin{tikzpicture}[tdplot_main_coords]
\draw[gray, thick] (0,-1,0) -- (1,-1,0) -- (1,0,0) -- (0,1,0) --(-1,1,0) --  (-1,0,0) -- (0,-1,0);
\draw[gray, thick] (-1,0,0) -- (0,0,1) ;
\draw[gray, thick]  (0,1,0) --(0,0,1) ;
\draw[gray, thick]  (0,-1,0) -- (0,0,1) ;
\draw[gray, thick] (1,0,0) -- (0,0,1) ;
\draw[gray, thick] (-1,1,0) -- (0,0,1) ;
\draw[gray, thick] (1,-1,0) -- (0,0,1) ;

\draw[gray, thick] (-1,0,0) -- (0,0,-1) ;
\draw[gray, thick]  (0,1,0) --(0,0,-1) ;
\draw[gray, thick]  (0,-1,0) -- (0,0,-1) ;
\draw[gray, thick] (1,0,0) -- (0,0,-1) ;
\draw[gray, thick] (-1,1,0) -- (0,0,-1) ;
\draw[gray, thick] (1,-1,0) -- (0,0,-1) ;
%\filldraw[black] (0,0) circle (2pt);\filldraw[black] (3,0) circle (2pt);

\foreach \Point/\PointLabel in
{
(0,-1, 0)/,(-1,0,0)/, (1,0,0)/, (0,1,0)/, (1,-1,0)/, (-1,1,0)/,
(0,0,1)/,  (0,0,-1)/,
}
\draw[fill=gray] \Point circle (2pt) node[above right] {$\PointLabel$};

\foreach \Point/\PointLabel in
{
 (0,0,0)/, 
 }
\draw[fill=green] \Point circle (2pt) node[below left] {$\PointLabel$};
\node at  (0,0,-1.7)  {$D(X_6)$};
\end{tikzpicture}

\begin{tikzpicture}[tdplot_main_coords]
\draw[gray, thick]  (1,1,0) -- (-1,1,0);
\draw[gray, dashed] (-1,-1,0) -- (1, -1, 0) ;
\draw[gray, dashed] (-1,1,0) -- (-1,-1,0);
\draw[gray, thick]  (1,-1,0) --  (1,1,0) ;
\draw[gray, thick] (-1,-1,0) -- (0,0,1) ;
\draw[gray, thick]  (-1,1,0) --  (0,0,1);
\draw[gray, thick]  (1,-1,0) --  (0,0,1) ;
\draw[red, thick] (1,1,0) --  (0,0,1);

\draw[gray, dashed] (-1,-1,0) -- (0,0,-1) ;
\draw[gray,  dashed]  (-1,1,0) --  (0,0,-1);
\draw[gray, thick]  (1,-1,0) --  (0,0,-1) ;
\draw[gray, thick] (1,1,0) --  (0,0,-1);

\draw[red, thick] (-1,1,0) --  (0,0,1);
\draw[red, thick] (-1,1,0) -- (1,1,0);
\draw[red, thick] (0,1,0) --  (0,0,1);
%\filldraw[black] (0,0) circle (2pt);\filldraw[black] (3,0) circle (2pt);

\foreach \Point/\PointLabel in
{
(0,-1, 0)/,(-1,0,0)/, (1,0,0)/, (0,1,0)/, (-1,-1,0)/, (1,-1,0)/, (1,1,0)/, (-1,1,0)/,
(0,0,1)/,  (0,0,-1)/, 
}
\draw[fill=gray] \Point circle (2pt) node[above right] {$\PointLabel$};

\foreach \Point/\PointLabel in
{
 (0,0,0)/, 
 }
\draw[fill=green] \Point circle (2pt) node[below left] {$\PointLabel$};
\node at (0,0,-1.7) {$D(X_8)$};
\end{tikzpicture}
\begin{tikzpicture}[tdplot_main_coords]
%\draw[thick,->] (0,0,0) -- (2,0,0) node[anchor=north east]{$x$};
%\draw[thick,->] (0,0,0) -- (0,2,0) node[anchor=north west]{$y$};
%\draw[thick,->] (0,0,0) -- (0,0,2) node[anchor=south]{$z$};
%\draw[axis] (0,0,0) -- (3,0,0) node[anchor=west]{$x$};
%	\draw[axis] (0,0,0) -- (0,3,0) node[anchor=west]{$y$};
%	\draw[axis] (0,0,0) -- (0,0,3) node[anchor=west]{$z$};

\draw[gray, dashed] (-1,-1,0) -- (-1,2,0) ;
\draw[red,thick]   (-1,2,0) -- (2,-1,0);
\draw[gray,dashed]  (2,-1,0) -- (-1,-1,0);

\draw[gray, thick] (-1,-1,0) -- (0,0,1) ;
\draw[red, thick]  (2,-1,0) -- (0,0,1) ;
\draw[red, thick] (-1,2,0) -- (0,0,1);

\draw[red, thick]  (0,1,0) -- (0,0,1) ;
\draw[red, thick]  (1,0,0) -- (0,0,1) ;
\draw[gray,dashed] (-1,-1,0) -- (0,0,-1) ;
\draw[gray, thick]  (2,-1,0) -- (0,0,-1) ;
\draw[gray, thick] (-1,2,0) -- (0,0,-1);
%\filldraw[black] (0,0) circle (2pt);\filldraw[black] (3,0) circle (2pt);

\foreach \Point/\PointLabel in
{
(-1,-1,0)/, (1,0,0)/, (0,1,0)/,  (-1,2,0)/, (2,-1,0)/, (-1,0,0)/,(-1,1,0)/, (1,-1,0)/, (0,-1,0)/,
(0,0,1)/,  (0,0,-1)/,
}
\draw[fill=gray] \Point circle (2pt) node[above right] {$\PointLabel$};

\foreach \Point/\PointLabel in
{
 (0,0,0)/, 
 }
\draw[fill=green] \Point circle (2pt) node[below left] {$\PointLabel$};
\node at (0,0,-1.7) {$D(X_9)$};
\end{tikzpicture}
\caption{$D(X_i)$ }\label{triangulation of double cone}
\end{figure}
\end{center}

\tikzset{
  font={\fontsize{9pt}{12}\selectfont}}
\begin{center}
\begin{figure}[h!]
\tdplotsetmaincoords{70}{110}
\begin{tikzpicture}[tdplot_main_coords]

\draw[gray, thick]  (3,-1,-1)--(-1,3,-1) -- (-1,-1,3) -- (3,-1,-1);
\draw[gray, dashed]  (3,-1,-1)--(-1,-1,-1) -- (-1,-1,3) -- (3,-1,-1);
\draw[gray, dashed]  (3,-1,-1)--(-1,3,-1) -- (-1,-1,-1) -- (3,-1,-1);
\foreach \Point/\PointLabel in
{
(-1,-1,-1)/, (0,-1,-1)/, (1,-1,-1)/, (2,-1,-1)/, (3,-1,-1)/, 
(-1,0,-1)/,  (0,0,-1)/, (1,0,-1)/, (2,0,-1)/, 
(-1,1,-1)/,  (0,1,-1)/, (1,1,-1)/,  
(-1,2,-1)/,  (0,2,-1)/, 
(-1,3,-1)/,
(-1,-1,3)/,
}
\draw[gray] \Point circle (2pt) node[above right] {$\PointLabel$};

\foreach \Point/\PointLabel in
{
(-1,-1,0)/, (0,-1,0)/, (1,-1,0)/, (2,-1,0)/, 
(-1,0,0)/,  (0,0,0)/, (1,0,0)/, 
(-1,1,0)/,  (0,1,0)/, 
(-1,2,0)/, 
(-1,-1,1)/, (0,-1,1)/, (1,-1,1)/,  
(-1,0,1)/,  (0,0,1)/,
(-1,1,1)/, 
(-1,-1,2)/, (0,-1,2)/,   
(-1,0,2)/,  
}
\draw[gray] \Point circle (2pt) node[above right] {$\PointLabel$};
\foreach \Point/\PointLabel in
{
(-1,-1,-1)/,  (3,-1,-1)/, 
(-1,-1,3)/, (-1,3,-1)/,
}
\draw[fill=gray] \Point circle (2pt) node[above right] {$\PointLabel$};

\foreach \Point/\PointLabel in
{
 (0,0,0)/, 
 }
\draw[fill=green] \Point circle (2pt) node[below left] {$\PointLabel$};

\node at (0,0,-2) {$\PP^3$};
\end{tikzpicture}
\begin{tikzpicture}[tdplot_main_coords]

\draw[gray, thick]  (1,0,0)--(0,1,0) -- (0,0,1) -- (1,0,0);
\draw[gray,thick]  (1,0,0)--(-1,-1,-1) -- (0,0,1) -- (1,0,0);
\draw[gray, thick] (1,0,0)--(0,1,0) -- (-1,-1,-1)-- (1,0,0);

\foreach \Point/\PointLabel in
{
(1,0,0)/,  (0,1,0)/, 
(0,0,1)/, (-1,-1,-1)/,
}
\draw[fill=gray] \Point circle (2pt) node[above right] {$\PointLabel$};

\foreach \Point/\PointLabel in
{
 (0,0,0)/, 
 }
\draw[fill=green] \Point circle (2pt) node[below left] {$\PointLabel$};
\node at (0,0,-1.3) {$\PP^3/(\ZZ/4\ZZ)$};
\end{tikzpicture}
\begin{tikzpicture}

\foreach \Point/\PointLabel in
{
 (0,-1,-1)/,(-1,0,-1)/, (-1,-1,0)/, (2,-1,-1)/,(2,-1,0)/,(2,0,-1)/,
 (-1,2,-1)/,(-1,2,0)/, (0,2,-1)/, (-1,-1,2)/, (-1,0,2)/, (0,-1,2)/,
}
\draw[fill=gray] \Point circle (2pt) node[above right] {$\PointLabel$};

\foreach \Point/\PointLabel in
{
(0,-1,-1)/, (1,-1,-1)/, (2,-1,-1)/, 
(-1,0,-1)/,  (0,0,-1)/, (1,0,-1)/, (2,0,-1)/, 
(-1,1,-1)/,  (0,1,-1)/, (1,1,-1)/,  
(-1,2,-1)/,  (0,2,-1)/, 
}
\draw[gray] \Point circle (2pt) node[above right] {$\PointLabel$};

\foreach \Point/\PointLabel in
{
(-1,-1,0)/, (0,-1,0)/, (1,-1,0)/, (2,-1,0)/, 
(-1,0,0)/,  (0,0,0)/, (1,0,0)/, 
(-1,1,0)/,  (0,1,0)/, 
(-1,2,0)/, 
(-1,-1,1)/, (0,-1,1)/, (1,-1,1)/,  
(-1,0,1)/,  (0,0,1)/,
(-1,1,1)/, 
(-1,-1,2)/, (0,-1,2)/,   
(-1,0,2)/,  
}
\draw[gray] \Point circle (2pt) node[above right] {$\PointLabel$};
\foreach \Point/\PointLabel in
{
 (0,0,0)/, 
 }
\draw[fill=green] \Point circle (2pt) node[below left] {$\PointLabel$};

\draw[gray, thick]  (2,-1,-1)--(2,-1,0) -- (2,0,-1) -- (2,-1,-1);
\draw[gray, thick]  (-1,2,-1)--(-1,2,0) -- (0,2,-1) -- (-1,2,-1);
\draw[gray, dashed]  (-1,0,-1)--(-1,-1,0) -- (0,-1,-1) -- (-1,0,-1);
\draw[gray, thick]  (-1,-1,2)--(-1,0,2) -- (0,-1,2) -- (-1,-1,2);
\draw[gray, thick]  (2,-1,0)--(0,-1,2); 
\draw[gray, thick]  (0,2,-1)--(2,0,-1); 
\draw[gray, thick]  (-1,0,2)--(-1,2,0); 
\draw[gray, dashed]  (-1,0,-1)--(-1,2,-1);
\draw[gray, dashed]  (-1,-1,0)--(-1,-1,2);
\draw[gray, dashed]  (0,-1,-1)--(2,-1,-1);

\node at (0,-2,0) {$\PP^3$ blows up 4 points};

\end{tikzpicture}
\tdplotsetmaincoords{10}{10}
\begin{tikzpicture}[tdplot_main_coords]

\draw[blue, thick]  (1,0,0)--(1,1,1);
\draw[blue, thick]  (0,1,0)--(1,1,1);
\draw[blue, thick]  (0,0,1)--(1,1,1);
%\draw[gray, thick]  (0,0,1)--(1,0,0) -- (0,1,0) -- (0,0,1);
\draw[blue,thick]  (-1,0,0)--(-1,-1,-1);
\draw[blue,thick]  (0,-1,0)--(-1,-1,-1);
\draw[blue, thick]  (0,0,-1)--(-1,-1,-1);
%\draw[gray, dashed]  (0,0,-1)--(-1,0,0) -- (0,-1,0) -- (0,0,-1);

\draw[gray,thick] (0,-1,0) -- (1,0,0) -- (0,1,0) -- (-1,0,0) -- (0,-1,0);
\draw[gray, thick]  (1,0,0) -- (0,1,0);
\draw[gray, thick]  (1,0,0) -- (0,-1,0);
\draw[gray, thick] (-1,0,0) -- (0,0,1) ;
\draw[gray,thick]  (0,1,0) -- (0,0,1) ;
\draw[gray, thick]  (0,-1,0) -- (0,0,1) ;
\draw[gray, thick] (1,0,0) -- (0,0,1);
\draw[gray, thick] (-1,0,0) -- (0,0,-1) ;
\draw[gray, thick]  (0,1,0) -- (0,0,-1) ;
\draw[gray, thick]  (0,-1,0) -- (0,0,-1) ;
\draw[gray, thick] (1,0,0) -- (0,0,-1);

\foreach \Point/\PointLabel in
{
(1, 0, 0)/, (-1, 0, 0)/, (0,1, 0)/, (0,-1, 0)/, (0, 0, 1)/,  (0, 0, -1)/, (-1,-1,-1)/, (1,1,1)/,
}
\draw[fill=gray] \Point circle (2pt) node[above right] {$\PointLabel$};

\foreach \Point/\PointLabel in
{
 (0,0,0)/, 
 }
\draw[fill=green] \Point circle (2pt) node[below left] {$\PointLabel$};

\node at (0,-1,-1.3) {Dual of $\PP^3$ blow up 4 points. };

\end{tikzpicture}
\tdplotsetmaincoords{70}{110}
\begin{tikzpicture}[tdplot_main_coords]

\draw[gray, thick] (-1,0,1) -- (0,-1,1) -- (0,0,1) -- (-1,0,1);
\draw[gray, thick] (1,0,-1) -- (0,1,-1) -- (0,0,-1) -- (1,0,-1);
\draw[gray, dashed] (1,0,0)-- (0,1,0) --  (-1,1,0) -- (-1,0,0) -- (0,-1,0) -- (1,-1,0) -- (1,0,0);
\draw[gray, thick]  (1,-1,0) -- (1,0,0) -- (0,1,0) --(-1,1,0);
\draw[gray, thick]   (1,0,0) -- (0,0,1);
\draw[gray, thick]   (0,1,0) -- (0,0,1);
\draw[gray, thick]   (1,-1,0) -- (0,-1,1);
\draw[gray, thick]   (-1,1,0) -- (-1,0,1);
\draw[gray, dashed]   (0,-1,0) -- (0,-1,1);
\draw[gray, dashed]   (-1,0,0) -- (-1,0,1);
\draw[gray,thick]   (0,1,0) -- (0,1,-1);
\draw[gray, thick]   (1,0,0) -- (1,0,-1);
\draw[gray, dashed]   (-1,0,0) -- (0,0,-1);
\draw[gray, dashed]   (0,-1,0) -- (0,0,-1);
\draw[gray, thick]   (-1,1,0) -- (0,1,-1);
\draw[gray, thick]   (1,-1,0) -- (1,0,-1);
\foreach \Point/\PointLabel in
{
(-1,0,1)/,(0,-1,1)/, (0,0,1)/,
(1,0,0)/, (0,1,0)/, (-1,1,0)/,(-1,0, 0)/, (0,-1,0)/, (1,-1,0)/,
(1,0,-1)/, (0,1,-1)/, (0,0,-1)/,
}
\draw[fill=gray] \Point circle (2pt) node[above right] {$\PointLabel$};

\foreach \Point/\PointLabel in
{
 (0,0,0)/, 
 }
\draw[fill=green] \Point circle (2pt) node[below left] {$\PointLabel$};

\node at (0.5,-1,-1.5) {cuboctahedron};

\end{tikzpicture}
\tdplotsetmaincoords{30}{60}
\begin{tikzpicture}[tdplot_main_coords]

\draw[gray, thick] (0,0,1)--(1,0,1) --(1,1,1)-- (0,1,1)-- (0,0,1);
\draw[gray, thick] (0,0,-1)--(-1,0,-1) --(-1,-1,-1)-- (0,-1,-1)-- (0,0,-1);
\draw[gray, thick] (1,0,1) --(1,1,1)-- (0,1,1);
%\draw[gray, thick] (0,-1,-1)--(0,0,-1)--(-1,0,-1);
\draw[gray, thick] (1,0,0) -- (1,0,1);
\draw[gray, thick] (1,1,0) -- (1,1,1);
\draw[gray, thick] (0,1,0) -- (0,1,1);
\draw[gray, thick] (-1,-1,0) -- (0,0,1);
\draw[gray, thick]  (-1,0,0)--(0,1,1);
\draw[gray, thick] (0,-1,0) -- (1,0,1);

\draw[gray, thick] (1,0,0) --(1,1,0)-- (0,1,0);
\draw[gray, thick] (-1,0,0) --(-1,-1,0)-- (0,-1,0);

\draw[red, dashed] (1,1,1) --(0,0, 1);
\draw[red, dashed] (-1,-1,-1) --(0,0, -1);
\draw[red, dashed] (0,1,0) --(-1,0, 0);
\draw[red, dashed] (0,-1,0) --(1,0, 0);
%\draw[gray, thick] (0,-1,0) -- (0,-1,-1);
\draw[gray,thick] (1,0,0) -- (0,-1,-1);
\draw[gray, thick] (1,1,0) -- (0,0,-1);
\draw[gray, thick] (0,1,0) -- (-1,0,-1);
\draw[gray, thick] (0,-1,0) -- (0,-1,-1);
\draw[gray, thick] (-1,-1,0) -- (-1,-1,-1);
\draw[gray, thick] (-1,0,0) -- (-1,0,-1);
%\draw[gray, thick] (1,0,-1) -- (0,1,-1) -- (0,0,-1) -- (1,0,-1);
%\draw[gray, dashed] (1,0,0)-- (0,1,0) --  (-1,1,0) -- (-1,0,0) -- (0,-1,0) -- (1,-1,0) -- (1,0,0);
%\draw[gray, thick]  (1,-1,0) -- (1,0,0) -- (0,1,0) --(-1,1,0);
%\draw[gray, thick]   (1,0,0) -- (0,0,1);
%\draw[gray, thick]   (0,1,0) -- (0,0,1);
%\draw[gray, thick]   (1,-1,0) -- (0,-1,1);
%\draw[gray, thick]   (-1,1,0) -- (-1,0,1);
%\draw[gray, dashed]   (0,-1,0) -- (0,-1,1);
%\draw[gray, dashed]   (-1,0,0) -- (-1,0,1);
%\draw[gray,thick]   (0,1,0) -- (0,1,-1);
%\draw[gray, thick]   (1,0,0) -- (1,0,-1);
%\draw[gray, dashed]   (-1,0,0) -- (0,0,-1);
%\draw[gray, dashed]   (0,-1,0) -- (0,0,-1);
%\draw[gray, thick]   (-1,1,0) -- (0,1,-1);
%\draw[gray, thick]   (1,-1,0) -- (1,0,-1);
\foreach \Point/\PointLabel in
{
(1,0,0)/,(1,1,0)/,(0,1,0)/, (-1,0,0)/, (-1,-1,0)/, (0,-1,0)/,
(0,0,1)/,(1,1,1)/,  (0,0,-1)/,(-1,-1,-1)/, 
}
\draw[fill=gray] \Point circle (2pt) node[above right] {$\PointLabel$};

\foreach \Point/\PointLabel in
{
(1,0,1)/, (0,1,1)/, (-1,0,-1)/,(0,-1,-1)/,
}
\draw[fill=blue] \Point circle (2pt) node[above right] {$\PointLabel$};

\foreach \Point/\PointLabel in
{
 (0,0,0)/, 
 }
\draw[fill=green] \Point circle (2pt) node[below left] {$\PointLabel$};

\node at (0,-1,-1.5) {Daul of cuboctahedron. };

\end{tikzpicture}

\caption{Others special polytopes, red line indicate part of triangulation}
\end{figure}
\end{center}

% \item $(\PP^3, O(4)):=Conv\{(-1,-1,-1), (3,-1,-1),(-1,3,-1), (-1,-1,3)\}$ ( tetrahedron ) and its dual, \\
%  $\PP^3/ \ZZ_4=Conv\{(-1,-1,-1), (1,0,0), (0,1,0),(0,0,1)\}$   ( tetrahedron );
%\item  $\PP^3$ blowup 4 points, which is a convex set of the points:\\
%  $(0,-1,-1),(-1,0,-1), (-1,-1,0). (2,-1,-1),(2,-1,0),(2,0,-1)$,\\
%$ (-1,2,-1),(-1,2,0), (0,2,-1), (-1,-1,2), (-1,0,2), (0,-1,2)$,\\
% hence it is a truncated tetrahedron, which the boundary contain 4 $X_6$ and 4 simplex triangle. each vertex connect to 2 simplex triangle and 2 $X_6$.\\
%It dual is given by 
%\[Conv\{(\pm 1, 0, 0) (0,\pm 1, 0), (0, 0, .\pm 1), (-1,-1,-1),(1,1,1)\},\]
%which is $D(X_4)$ glue with two standard 3 simplex, which the faces are all standard 2 simplex.
%\item $Conv\{ (\pm 1,0, 0),(0, \pm 1, 0), (\pm 1, \mp 1, 0), (0, 0, \pm 1), (\pm 1, 0, \mp 1), (0, \pm 1, \mp 1)\}$, It is a cuboctahedron, with 8 triangular faces and 6 square faces, and each vertex are connected to 2 2 simplex and 2 square. Hence for the surface, each vertex $p$, 
%\[n(p)\leq 2+2 \cdots 2=6,\] and the remaining is also smaller than 6.\\
%and its dual, which is given by \[Conv\{ (\pm 1,0, 0),(0, \pm 1, 0), (\pm 1, \mp 1, 0), (\pm 1, \pm 1, 0), (\pm 1, 0, \pm 1), (0, \pm 1, \pm1)\},\] which is a rhombic dodecahedron.

\newpage
\appendix
 \section{From integral polytypes to varieties}
In this section, we will briefly explain how we obtain a toric variety from the integral polytope. then we will write down the corresponding varieties of the toric varieties occurred in this note.
\subsection{general procedure}
Let $P$ be a integral polytope containing $(0,...,0)$. Let $\{p_0=(0,...,0), p_1,...,p_{N}\}$ be all the integral points in $P$. Then we can define a toric subvariety in $\PP^N$ by the following equations:

Suppose we have \[c_1p_{i_1}+\cdots+ c_rp_{i_r}=b_1 p_{j_1}+\cdots +b_s p_{j_s},\] and without loss of generality, we may assume \[c_1+\cdots +c_r=b_1+\cdots +b_s+a\] for some $a \geq 0$. Then we have a homogeneous polynomial defined by 
\[z_{i_1}^{c_1}\cdots z_{i_r}^{c_r}= z_{j_1}^{b_1}\cdots z_{j_s}^{b_s}z_0^a.\]  Then 
$\{z_{i_1}^{c_1}\cdots z_{i_r}^{c_r}- z_{j_1}^{b_1}\cdots z_{j_s}^{b_s}z_0^a=0\}$ is a divisor in $\PP^{N}$. 
 and it is a toric subvariety. Notice that the toric action is given by 
\[(\CC^*)^{N-1}\cong \{(\lambda_1,\cdots, \lambda_N)\in (\CC^*)^{N} 
||\lambda_{i_1}^{c_1}\cdots \lambda_{i_r}^{c_r}= \lambda_{j_1}^{b_1}\cdots \lambda_{j_s}^{b_s}\}.\] By intersecting all these divisors, we can obtain a toric subvariety.  

Notic that some of the equations are repeated in $(\CC^*)^N$, so in the following, we will define the variety only by those which is different equation in $(\CC^*)^N$, and the variety is the closeure of this.
\subsection{examples}
 \begin{exam}[$A_n$]
 Denote $p_0=(0,...,0)$, $p_i=e_i$ for $i=1,...,n$ and $p_{n+1}=(-1,...,-1)$. Then we have 
\[p_1+\cdots p_n=(0,...,0)=p_0,\] so the corresponding varieties, also denoted as  $A_n$, is given by
\[A_n=\{[z_0,\cdots, z_{n+1}]\in \PP^{n+1}| z_1\cdots z_{n+1}=z_0^{n+1}\}.\] 
\end{exam}
\begin{exam}[$D_n$]
Recall that $D_n:=Conv\{\pm e_i\}$. Denote $p_0=(0,...,0)$, $p_{2i-1}=e_i$, $p_{2i}=-e_i$ for $i=1,...,n$. then $p_{2i-1}+p_{2i}=0$ for $i=1,...,n$ gives n equations $z_{2i-1}z_{2i}=z_0^2$, and it gives a $n$ codimension subvariety of $\PP^{2n}$, hence these equations define $D_n$.
\[D_n=\{[z_0,...,z_{2n}]\in \PP^{2n}| z_{2i-1}z_{2i}=z_0^2\}.\]
\end{exam}
Given \[z_{i_1}^{c_1}\cdots z_{i_r}^{c_r}= z_{j_1}^{b_1}\cdots z_{j_s}^{b_s}z_0^a,\] we denote \[f(z_0,...,z_N)=z_{i_1}^{c_1}\cdots z_{i_r}^{c_r}- z_{j_1}^{b_1}\cdots z_{j_s}^{b_s}z_0^a.\]
\begin{exam}[$D(P)$]
Let $P$ is defined by \[\{[z_0,...,z_N]\in \PP^N| f_1=\cdots =f_r=0\}\] Then we define $\hat{f_i}(z_0,...,z_{N+2})=f_i(z_0,...,z_N)$. then by denote $p_{N+1}=(0,...,0,1)$, $p_{N+2}=(0,...,0,-1)$, we have a new equation 
\[z_{N+1}z_{N+2}=z_0^2.\]
Then the variety of $D(P)$ is given by 
 \[D(P)=\{[z_0,...,z_{N+1}z_{N+2}]\in \PP^N| f_1=\cdots =f_r=z_{N+1}z_{N+2}-z_0^2=0\}.\] 
\end{exam}
In order to define $D(P)$, we need to know $P$ as a subvariety of $\PP^N$, so in order to compute all the examples, we need to write down what  $X_i$ is as a subvariety. 
\begin{exam} As a subvariety of $\PP^i$,  restricted in $(\CC^*)^i\subset \PP^i$, $X_i$ are given by:
\begin{enumerate}
\item $X_3=A_2=\{[z_0,z_1,z_2,z_3]\in \PP^3| z_1z_2z_3=z_0^3\}$,
\item $X_4=D_2=\{[z_0,z_1,z_2,z_3,z_4]\in \PP^4| z_1z_2=z_0^2, z_3z_4=z_0^2\}$,
\item $X_6=\{[z_0,z_1,...,z_6]\in \PP^6| z_1z_4=z_2z_5=z_3z_6=z_0^2, z_2z_4=z_3z_0\}$, here the last equation comes from $(0,1)+(-1,0)=(-1,1)$, also, $z_2z_4=z_3z_0$ can be replaced by $z_1z_3z_5=z_0^3$ with other equations to get the same variety.
\item $X_8=\{[z_0,z_1,...,z_8]\in \PP^8|z_rz_{r+4}=z_0^2, \text{ where }r=1,2,3,4; z_1z_3=z_2z_0, z_3z_5=z_4z_0\}$, and
\item $X_9=\{[z_0,z_1,...,z_9]\in \PP^9|z_rz_{r+3}z_{r+6}=z_0^3, \text{ where } r=1,2,3; z_1z_3=z_2^2, z_2z_4=z_3^2, z_2z_6=z_0^2,z_3z_7=z_0^2\}$. Another way to write it is using $X_6$ plus 3 points, hence we need two more relation, namely 
\[\{[z_0,z_1,...,z_9]\in \PP^9|z_1z_4=z_2z_5=z_3z_6=z_0^2, z_2z_4=z_3z_0, z_7z_8z_9=z_0^3, z_7z_8=z_1z_2\}.\]
\end{enumerate}
With this , we can write $D(X_i)$ as a subvarieties of $\PP^{i+2}$. For example,
\begin{enumerate}
\item $D(X_3)=\{[z_0,z_1,z_2,z_3,z_4,z_5]\in \PP^5| z_1z_2z_3=z_0^3,z_4z_5=z_0^2\}$,
\item $D(X_4)=\{[z_0,z_1,...,z_6]\in \PP^6| z_1z_2=z_0^2, z_3z_4=z_0^2,z_5z_6=z_0^2\}$,
\item $D(X_6)=\{[z_0,z_1,...,z_8]\in \PP^8| z_1z_4=z_2z_5=z_3z_6=z_0^2z_2z_4=z_3z_0,z_7z_8=z_0^2\}$, 
\item $D(X_8)=\{[z_0,z_1,...,z_{10}]\in \PP^{10}|z_rz_{r+4}=z_0^2, \text{ where }r=1,2,3,4; z_1z_3=z_2z_0, z_3z_5=z_4z_0,z_9z_{10}=z_0^2\}$, and
\item $D(X_9)=\{[z_0,z_1,...,z_{11}]\in \PP^{11}|z_rz_{r+3}z_{r+6}=z_0^3,\text{ where } r=1,2,3; z_1z_3=z_2^2, z_2z_4=z_3^2, z_2z_6=z_0^2,z_3z_7=z_0^2,z_{10}z_{11}=z_0^2\}$. 
\end{enumerate} 
\end{exam}
We already know what $X_i\times \PP^1$ are, so we will only write down the equations of the varieties that we don't know what it is, in which we denoted as
\begin{exam} 
\begin{enumerate}
\item 
$P_1=Conv\{(\pm 1, 0, 0) (0,\pm 1, 0), (0, 0, .\pm 1), (-1,-1,-1),(1,1,1)\},$
which is $D(X_4)$ glue with two standard 3 simplex, which the faces are all standard 2 simplex,
\item $P_2=Conv\{ (\pm 1,0, 0),(0, \pm 1, 0), (\pm 1, \mp 1, 0), (0, 0, \pm 1), (\pm 1, 0, \mp 1), (0, \pm 1, \mp 1)\}$, 
\item $P_3$, which is given by the convex hull of the points:
\[(1,0,0),(1,1,0),(0,1,0),(-1,0,0),(-1,-1,0), (0,-1,0), \] \[(0,0,1),(1,0,1),(1,1,1), (0,1,1), (0,0,-1),(-1,0,-1),(-1,-1,-1), (0,-1,-1)\] which is a rhombic dodecahedron. Notice that $(0,1,1), (1,0,1), (0,-1,-1),(-1,0,-1)$.
\end{enumerate}
\begin{enumerate}
\item
$P_1=\overline{\{[z_0,z_1,...,z_8]\in (\CC^*)^8\subset \PP^8| z_1z_2=z_0^2, z_3z_4=z_0^2,z_5z_6=z_0^2,z_7z_8=z_0^2, z_1z_3z_5=z_7z_0^2\}}$, the last equation is deduced from $e_1+e_2+e_3=(1,1,1)$.
\item $P_2$ is a subvariety of $\PP^{12}$, so we need 9 equations. $z_{2r-1}z_{2r}=z_0^2$ for $r=1,...,6$, $z_1z_4=z_5z_0$, $z_7z_9=z_1z_0$ and $z_7z_{11}=z_3z_0$.
\item $P_3$ is a subvariety of $\PP^{14}$. Following the order above, we have 
$z_1z_4=z_2z_5=z_3=z_6=z_0^2$, $z_1z_3=z_2z_0$, $z_7z_9=z_8z_{10}$, $z_{11}z_{13}=z_{12}z_{14}$, $z_7z_{11}=z_8z_{12}=z_{9}z_{13}=z_{10}z_{14}=z_0^2$ and $z_8z_{11}=z_0z_1$.
\end{enumerate}
\end{exam}

\begin{bibdiv}
\begin{biblist}

%\bib{Aub76}{article}{
%   author={Aubin, Thierry},
%   title={\'Equations du type Monge-Amp\`ere sur les vari\'et\'es k\"ahleriennes
%   compactes},
%   journal={C. R. Acad. Sci. Paris S\'er. A-B},
%   volume={283},
%   date={1976},
%   number={3},
%   pages={Aiii, A119--A121},
%   review={\MR{0433520}},
%}

\bib{BDDPS05}{article}{
   author={Beck, Matthias},
   author={De Loera, Jesús A.},
   author={Develin, Mike},
author={Pfeifle, Julian},
author={Stanley, Richard P.},
   title={Coefficients and roots of Ehrhart polynomials},
   journal={ Integer Points in Polyhedra—Geometry, Number Theory, Algebra, Optimization, Contemporary Mathematics},
publisher={American Mathematical Society, Providence, RI},
   volume={374},
   date={2005},
   number={1},
   pages={15–36},
   review={\MR{2134759}},
}

\bib{BS99}{article}{
   author={Batyrev, V.V.},
author={Selivanova, E.N.},
url = {https://doi.org/10.1515/crll.1999.054},
title = {Einstein-Kähler metrics on symmetric toric Fano manifolds},
pages = {225--236},
volume = {1999},
number = {512},
journal = {},
doi = {doi:10.1515/crll.1999.054},
year = {1999},}

\bib{Don01}{article}{
author={Donaldson, Simon K.},
title={Scalar curvature and projective embeddings, I,},
journal={J. Differential Geom.},
volume={59},
pages={479-522},
year={2001},
}

\bib{Don02}{article}{
   author={Donaldson, S. K.},
   title={Scalar curvature and stability of toric varieties},
   journal={J. Differential Geom.},
   volume={62},
   date={2002},
   number={2},
   pages={289--349},
   issn={0022-040X},
   review={\MR{1988506}},
}

\bib{Ehr77}{book}{
author={Ehrhart},
title={Polynomes arithmetiques et methode de polyedres en combinatoire},
series={International Series of Numerical Mathematics},
publisher={Birkh\"auser Basel},
date={1977},
s.issn={0373-3149},
}

\bib{Fut04}{article}{
author={Futaki, A.},
title={Asymptotic Chow semi-stability and integral invariants},
journal={Internat. J. Math.},
volume={15},
year={2004},
number={9},
page={967-979},
}

\bib{GKZ94}{book}{
author={Gelfand, I. M.}
author={Kapranov, M. M.}
author={Zelevinsky, A. V.}
title={Discriminants, resultants and multidimensional determinants.}
publisher={Birkh\"auser Boston, Inc., Boston, MA}
pages={ x+523 pp.}
date={1994}
}

%\bib{Kem78}{article}{
%author={Kempf, George},
%title={Instability in invariant theory.},
%journal={Ann. of Math.},
%volume={108},
%number={2},
%year={1978},
%pages={299-316},
%}

\bib{LLSW19}{article}{
author={Lee, King Leung},
author={Li, Zhiyuan},
author={Sturm, Jacob},
author={Wang, Xiaowei},
title={Asymptotic Chow stability of toric Del Pezzo surfaces},
journal={Mathematical Research Letters},
volume={26},
number={6},
year={2019},
pages={1759-1787},
doi={https://dx.doi.org/10.4310/MRL.2019.v26.n6.a7},
}

\bib{Mab04}{article}{
   author={Mabuchi, Toshiki},
   title={An obstruction to asymptotic semistability and approximate
   critical metrics},
   journal={Osaka Journal of Mathematics},
   volume={41},
   date={2004},
   number={2},
   pages={463--472},
   issn={0030-6126},
   review={\MR{2069096}},
}

\bib{Mab06}{article}{
  author={Mabuchi, Toshiki},
  title={Chow-stability and Hilbert-stability in Mumford's Geometric Invariant Theory},
  journal={Osaka Journal of Mathematics},
  year={2006},
  volume={45},
  pages={833-846},
}

%\bib{LZ22}{article}{
%doi = {10.48550/ARXIV.2206.13977},
%  url = {https://arxiv.org/abs/2206.13977},
%  author = {Loi, Andrea and Zuddas, Fabio},
%  title = {Some characterizations of the complex projective space via Ehrhart polynomials},
%  publisher = {arXiv},
%  year = {2022},
%}

%\bib{Mac71}{article}{
%author = {MacDonald, I. G.},
%title = {Polynomials Associated with Finite Gell-Complexes},
%journal = {Journal of the London Mathematical Society},
%volume = {s2-4},
%number = {1},
%pages = {181-192},
%doi = {https://doi.org/10.1112/jlms/s2-4.1.181},
%url = {https://londmathsoc.onlinelibrary.wiley.com/doi/abs/10.1112/jlms/s2-4.1.181},
%eprint = {https://londmathsoc.onlinelibrary.wiley.com/doi/pdf/10.1112/jlms/s2-4.1.181},
%year = {1971}
%}

\bib{MFK94}{book}{
   author={Mumford, D.},
   author={Fogarty, J.},
   author={Kirwan, F.},
   title={Geometric invariant theory},
   series={Ergebnisse der Mathematik und ihrer Grenzgebiete (2) [Results in
   Mathematics and Related Areas (2)]},
   volume={34},
   edition={3},
   publisher={Springer-Verlag, Berlin},
   date={1994},
   pages={xiv+292},
   isbn={3-540-56963-4},
   review={\MR{1304906}},
}

\bib{Ono13}{article}{
author={Ono, Hajime},
title={Algebro-geometric semistability of polarized toric manifolds.},
journal={Asian J. Math.},
volume={17},
year={2013},
pages={609-616},
}

%
%\bib{OSS16}{article}{
%author={Odaka, Yuji}
%author={Sun, Song}
%author={Spotti, Christiano}
%title={Compact moduli space of Del Pezzo surfaces and K\"ahler-Einstein metrics}
%journal={J. Differential Geom.}
%volume={102}
%number={1}
%date={2016}
%pages={127-172}
%}

\bib{OSY12}{article}{
   author={Ono, Hajime},
   author={Sano, Yuji},
   author={Yotsutani, Naoto},
   title={An example of an asymptotically Chow unstable manifold with
   constant scalar curvature},
   language={English, with English and French summaries},
   journal={Ann. Inst. Fourier (Grenoble)},
   volume={62},
   date={2012},
   number={4},
   pages={1265--1287},
   issn={0373-0956},
   review={\MR{3025743}},
}

\bib{Pic1899}{article}{
	author={Pick, Georg Alexander},
	title={ Geometrisches zur Zahlentheorie.},
	journal={Sitzungber Lotos (Prague)},
	volume={19},
	year={1899},
	pages={311-319},
}

%\bib{PS04}{article}{
%   author={Phong, D. H.},
%   author={Sturm, Jacob},
%   title={Scalar curvature, moment maps, and the Deligne pairing},
%   journal={Amer. J. Math.},
%   volume={126},
%   date={2004},
%   number={3},
%   pages={693--712},
%   issn={0002-9327},
%   review={\MR{2058389}},
%}

\bib{PS09}{article}{
   author={Phong, D. H.},
   author={Sturm, Jacob},
   title={Lectures on stability and constant scalar curvature},
   conference={
      title={Handbook of geometric analysis, No. 3},
   },
   book={
      series={Adv. Lect. Math. (ALM)},
      volume={14},
      publisher={Int. Press, Somerville, MA},
   },
   date={2010},
   pages={357--436},
   review={\MR{2743451}},
}
%
%
%\bib{Pul79}{article}{
%	author={Pullman, Howard W.},
%	title={ An Elementary Proof of Pick’s Theorem},
%	journal={ School Science and Mathematics},
%	volume={79},
%	issue={1},
%	year={1979},
%	pages={7-12}}
	
%Ross, Julius; Thomas, Richard A study of the Hilbert-Mumford criterion for the stability of projective varieties. J. Algebraic Geom. 16 (2007), no. 2, 201?255

\bib{RT07}{article}{
author={Ross, Julius}
author={Thomas, Richard}
title={A study of the Hilbert-Mumford criterion for the stability of projective varieties.}
journal={J. Algebraic Geom.}
volume={16}
number={2}
date={2007}
pages={201-255}
}

\bib{Yot17}{article}{
author={Yotsutani, Naoto},
title={The delta invariant and the various GIT-stability notions of toric Fano varieties},
journal={arXiv:1711.10113},
year={2017},
}

%\bib{Zha96}{article}{
%author={Zhang, Shouwu},
%title={Heights and reductions of semi-stable varieties},
%journal={Compos. Math. },
%volume={104},
%pages={77-105},
%year={1996},
%}
\end{biblist}
\end{bibdiv}
\end{document}